\definecolor{hypercolor}{HTML}{003399}
\newcommand{\rev}{\textcolor{black}}
\newtheorem{thm}{Theorem}[section]
\newtheorem{lem}[thm]{Lemma}
\newtheorem{prop}[thm]{Proposition}
\newtheorem{cor}[thm]{Corollary}
\theoremstyle{definition}
\newtheorem{rmk}[thm]{Remark}
\newcommand*{\Cdot}{{\raisebox{-0.5ex}{\scalebox{1.8}{$\cdot$}}}} 
\acrodef{LDP}{Large Deviation Principle}
\acrodef{KPZ}{Kardar--Parisi--Zhang}
\acrodef{SHE}{Stochastic Heat Equation}
\acrodef{WNT}{Weak Noise Theory}
\acrodef{LPP}{Last Passage Percolation}
\newcommand{\tcalK}{\widetilde{\calK}}
\newcommand{\calK}{\mathcal{K}}
\newcommand{\e}{\varepsilon}
\newcommand{\z}{\zeta}                     
\newcommand{\GG}{G}
\newcommand{\Leb}{\text{Leb}}               
\newcommand{\infi}{q_\scl}                  
\newcommand{\R}{\mathbb{R}}
\newcommand{\Z}{\mathbb{Z}}
\newcommand{\Csp}{C}						
\newcommand{\Cc}{C_c^\infty}                       
\newcommand{\Lsp}{L}						
\newcommand{\bana}{\mathcal{B}}				
\newcommand{\SD}{\mathsf{SD}}				
\newcommand{\Dom}{\mathcal{D}}
\renewcommand{\gg}{\mathsf{G}}
\newcommand{\sech}{\operatorname{sech}}
\newcommand{\pot}{\varphi}                     
\newcommand{\dev}{\rho}						
\newcommand{\rr}{\mathsf{r}}
\newcommand{\tdev}{\widetilde{\dev}}
\newcommand{\devm}{\rho_*}					
\newcommand{\devs}{\rho^{\mathsf{s}}}
\newcommand{\scl}{\lambda}					
\newcommand{\bb}{B_\text{b}}				
\newcommand{\hk}{p}
\newcommand{\ZZ}{Z}							
\newcommand{\ZZe}{\ZZ_\e} 
\newcommand{\Zfn}{\mathsf{Z}}				
\newcommand{\Zfnn}{\widetilde{\mathsf{Z}}}  
\newcommand{\hfn}{\mathsf{h}}				
\newcommand{\hfnn}{\widetilde{\hfn}}        
\newcommand{\rateone}{\Phi}					
\newcommand{\rate}{I}
\renewcommand{\P}{\mathbb{P}}				
\newcommand{\E}{\mathbb{E}}					
\renewcommand{\d}{\mathrm{d}}				
\newcommand{\ind}{\mathbf{1}}				
\newcommand{\norm}[1]{\Vert #1\Vert}		
\newcommand{\normL}[1]{\| #1\|_{L^2(\R)}}	
\newcommand{\normLL}[1]{\|#1\|_{2}}
\newcommand{\dist}{\mathrm{dist}}
\newcommand{\indentation}{\noindent}
\newcommand\numberthis{\addtocounter{equation}{1}\tag{\theequation}}
\numberwithin{equation}{section}
\begin{document}
\title{KPZ equation with a small noise, deep upper tail and limit shape}
\author{Pierre Yves Gaudreau Lamarre, Yier Lin, and Li-Cheng Tsai}
\address[Pierre Yves Gaudreau Lamarre]{\ Department of Statistics, University of Chicago}
\email{pyjgl@uchicago.edu}
\address[Yier Lin]{\ \hspace{72pt}Department of Statistics, University of Chicago}
\email{ylin10@uchicago.edu}
\address[Li-Cheng Tsai]{\ \hspace{53pt}Departments of Mathematics, University of Utah}
\email{lctsai.math@gmail.com}
\begin{abstract}
In this paper, we consider the KPZ equation under the weak noise scaling. That is,  we introduce a small parameter $\sqrt{\e}$ in front of the noise and let $\e \to 0$. 
We prove that the one-point large deviation rate function has a $\frac{3}{2}$ power law in the deep upper tail. Furthermore, by forcing the value of the KPZ equation at a point to be very large, we prove a limit shape of the solution of the KPZ equation as $\e \to 0$. This confirms the physics prediction in \cite{kolokolov2007optimal, kolokolov2009explicit,  kamenev2016short, meerson2016large, le2016exact, hartmann2019optimal}. 
\end{abstract}
\maketitle
\section{Introduction}
\indentation 
The \ac{KPZ} equation \cite{kardar1986dynamic} is a non-linear stochastic PDE which describes the random growth of an interface that has a property of lateral growth and relaxation  
\begin{equation}\label{eq:kpzunscaled}
\partial_t h = \frac{1}{2} \partial_{xx} h + \frac{1}{2} (\partial_x h)^2 + \xi.
\end{equation}
Here $\xi$ is the space-time white noise, which can be informally understood as a Gaussian field with Dirac-delta correlation function $\E[\xi(t, x) \xi(s, y)] = \delta(t-s) \delta(x-y)$. The KPZ equation has been studied intensively over the past 35 years. We refer to \cite{ferrari2010random, quastel2011introduction, corwin2012kardar, quastel2015one, chandra2017stochastic,corwin2020some} for some surveys of the mathematical studies of the \ac{KPZ} equation.
\bigskip
\\
Care is needed to make sense of the solution to \eqref{eq:kpzunscaled} due to the non-linearity and space-time white noise in the equation. One way of defining the solution is through the \emph{Hopf-Cole transform}. That is, we define $h := \log Z$, where $Z$ solves the \ac{SHE}
\begin{equation*}
\partial_t Z = \frac{1}{2} \partial_{xx} Z + \xi Z.
\end{equation*} 
We say that $Z$ is the \emph{mild solution} to the SHE if
\begin{equation}\label{eq:mild}
Z(t, x) = \int_\R p(t, x- y) Z(0, y) dy + \int_0^t \int_\R p(t-s, x-y) Z(s, y) \xi(s, y) dsdy,
\end{equation}
where $p(t, x) := \frac{1}{\sqrt{2\pi t}} e^{-\frac{x^2}{2t}}$ is the heat kernel. The solution theory of the \ac{SHE} is standard;
see \cite[Sections 2.1-2.6]{quastel2011introduction} for more details. Moreover, for function-valued initial data $Z(0, \Cdot) \geq 0$ that is not identically zero, \cite{mueller1991support} shows that $Z$ is always positive, i.e. almost surely $Z(t, x) > 0$ for all $t > 0$ and $x \in \R$. This guarantees the wellposedness of $h$. One often considered initial data is $Z(0, \Cdot) = \delta(\Cdot)$, where $\delta(\Cdot)$ is a Dirac-delta function. We refer to this as the Dirac-delta initial data for $Z$ and the \emph{narrow wedge initial data} for $h$. \cite{flores2014strict} shows that under the Dirac-delta initial data, almost surely $Z$ is positive for all $t > 0$ and $x \in \R$. 
Other definitions and constructions of the solution to the \ac{KPZ} equation are given by regularity structure \cite{hairer2014theory}, paracontrolled distribution \cite{gubinelli2015paracontrolled} or the notion of energy solution \cite{gonccalves2014nonlinear, gubinelli2018energy}.
\bigskip
\\
In recent years, the large deviations of the \ac{KPZ} equation have received much attention in the mathematics and physics communities. The large deviations of the \ac{KPZ} equation can be studied in two regimes: long time regime $(t \to \infty)$  and short time regime $(t \to 0)$. For the long time regime, 
the work \cite{corwin2020lower}  rigorously proved a detailed bound for the lower tail of the KPZ equation under the narrow wedge initial data. This bound captures a cubic to $\frac{5}{2}$ crossover; see also the physics work \cite{krajenbrink2018simple}. \cite{corwin2020kpz} obtained similar bounds for the \ac{KPZ} equation under general initial data. Under the narrow wedge initial data, the exact one-point lower tail large deviation rate function was derived in the physics works \cite{sasorov2017large, corwin2018coulomb, krajenbrink2018systematic, doussal2019large} and was proved rigorously by \cite{tsai2018exact, cafasso2021riemann}. \cite{krajenbrink2019linear} showed that the four methods in \cite{sasorov2017large, corwin2018coulomb, krajenbrink2018systematic, tsai2018exact} are closely related. For the upper tail, the physics work \cite{le2016large} predicted the $\frac{3}{2}$-power law for the entire rate function of the KPZ equation narrow wedge initial data. \cite{das2021fractional} gave a rigorous proof for the upper tail \ac{LDP}. The result was extended to general initial data by \cite{ghosal2020lyapunov}.  
\bigskip
\\
For the large deviations of the \ac{KPZ} equation in the short time regime, the results are fruitful in the physics literature; see \cite{krajenbrink:tel-02537219}. In particular, the physics literature 
\cite{kolokolov2007optimal, kolokolov2009explicit, kamenev2016short, meerson2016large} predicted that for the narrow wedge and flat initial data, the one-point large deviation rate function exhibits a $\frac{3}{2}$-power law in the deep upper tail, a quadratic power law in the near-center tail and a $\frac{5}{2}$-power law in the deep lower tail. 
The physics work \cite{le2016exact} derived the entire one-point rate function, from which the authors are able to confirm these power laws. Their prediction was backed by the numerical result \cite{hartmann2018high}. The one-point large deviations 
were rigorously proved in \cite{lin2021short}. 
The authors also rigorously proved the quadratic to $\frac{5}{2}$-power law crossover in the lower tail rate function. 
\bigskip
\\
Studying the KPZ equation in the short time regime is the same as studying the KPZ equation in the \emph{weak noise regime}. That is,
we introduce a small parameter $\sqrt{\e}$ in front of the noise,
\begin{equation}\label{eq:kpzweaknoise}
\partial_t h_\e  = \tfrac{1}{2} \partial_{xx} h_\e + \tfrac{1}{2} (\partial_x h_\e)^2 + \sqrt{\e} \xi.
\end{equation}
The solution to the above equation is defined to be $h_\e := \log Z_\e$ where $Z_\e$ solves the \ac{SHE}
\begin{align}\label{e.mild.nw}
\partial_t Z_\e = \tfrac12 \partial_{xx} Z_\e + \sqrt{\e} \xi Z_\e.
\end{align}
Throughout the paper, 
we set $Z_\e (0, \Cdot) = \delta(\Cdot)$. The short time regime of \eqref{eq:kpzunscaled} is related to \eqref{eq:kpzweaknoise} through scaling, namely, $h(\e^2 \Cdot, \e \Cdot) + \log \e  \overset{d}{=} h_\e (\Cdot, \Cdot)$. We add $\log \e$ to guarantee that $h_\e$ starts from the narrow wedge initial data. 
\bigskip
\\ 
The \ac{LDP} of $Z_\e = Z_\e (\Cdot, \Cdot)$ under the limit $\e \to 0$ has been rigorously proven in \cite{lin2021short}. The rate function is of Freidlin-Wentzell type. In particular, by the contraction principle,  for $\scl \geq 0$,
\begin{align}
\label{eq:ratelower}
\lim_{\e \to 0} \e \log \P\big[h_\e (2, 0) + \log \sqrt{4\pi} \leq -\scl\big] &= -\rateone(-\scl),\\
\label{eq:rateupper}
\lim_{\e \to 0} \e \log \P\big[h_\e (2, 0) + \log \sqrt{4\pi} \geq \scl\big] &= -\rateone(\scl),
\end{align} 
where $\Phi$ is the infimum of the Freidlin-Wentzell rate function subject to the relevant constraint. 
Extracting the asymptotics of 
$\Phi$ 
is non-trivial. \cite{lin2021short} proved that $\lim_{\scl \to 0} \scl^{-2} \rateone(\scl) = \tfrac{1}{\sqrt{2\pi}}$ and $\lim_{\scl \to \infty}  \scl^{-\frac{5}{2}} \Phi(-\scl) = \tfrac{4}{15\pi}$.
\subsection{Main results.}
The first result of the current paper concerns the deep upper tail of the rate function $\Phi$. In other words, we look at the asymptotic of $\Phi(\scl)$ as $\scl \to \infty$. 
It has been predicted in the physics literature \cite{kolokolov2007optimal, kolokolov2009explicit, kamenev2016short, meerson2016large, le2016exact} that 
$
\lim_{\scl\to\infty} \scl^{-3/2} \rateone(\scl) = \tfrac43$. Our first result gives a rigorous proof of the $\frac{3}{2}$-power law in the deep upper tail.
\begin{thm}\label{thm:uptail}
We have $ \displaystyle\lim_{\scl\to\infty} \scl^{-3/2} \rateone(\scl) = \tfrac43 $.
\end{thm}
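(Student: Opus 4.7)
I would work from the Freidlin--Wentzell reformulation of $\rateone$ underlying the LDP of \cite{lin2021short}: by the contraction principle applied to the infinite-dimensional LDP for $Z_\e = e^{h_\e}$ with narrow-wedge IC,
\begin{equation*}
\rateone(\scl) = \inf\Bigl\{\tfrac12 \int_0^2\!\!\int_\R \bigl(\partial_t h - \tfrac12 \partial_{xx} h - \tfrac12 (\partial_x h)^2\bigr)^2 \, dx\, dt : h(2,0) + \log \sqrt{4\pi} \geq \scl \Bigr\},
\end{equation*}
the infimum taken over admissible $h$ satisfying the narrow-wedge initial condition. Theorem~\ref{thm:uptail} then reduces to a large-$\scl$ asymptotic of this deterministic variational problem, which I would handle via matching lower and upper bounds.

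\textbf{Lower bound via moments.} By \eqref{eq:rateupper}, it suffices to show $\e\log\P[\sqrt{4\pi}\, Z_\e(2,0) \geq e^\scl] \leq -\tfrac43\scl^{3/2}(1 - o(1))$. Markov's inequality at the integer moment $n$ gives
\begin{equation*}
\e \log \P\bigl[\sqrt{4\pi}\, Z_\e(2,0) \geq e^\scl\bigr] \leq -\e n \scl + \e \log\E\bigl[(\sqrt{4\pi}\, Z_\e(2,0))^n\bigr].
\end{equation*}
The integer moments $\E[Z_\e(2,0)^n]$ coincide with the propagator of the attractive $\delta$-Bose gas with coupling $\e$; in the regime $\e n \to \infty$ the McGuire ground-state energy $-\e^2 n(n^2 - 1)/24$ dominates and yields $\e \log \E[(\sqrt{4\pi}\, Z_\e(2,0))^n] = \tfrac{\e^3 n^3}{12}(1+o(1))$. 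Choosing $n = \lceil 2\sqrt{\scl}/\e\rceil$ so that $\e n \to 2\sqrt\scl$ and letting $\e \to 0$ then $\scl \to \infty$ gives the claimed lower bound.

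\textbf{Upper bound via a trial profile.} For the matching upper bound I would construct an explicit trial $h_\scl$ in the variational problem above. Guided by the physics literature \cite{kolokolov2007optimal, meerson2016large, le2016exact}, the minimizer is a soliton of the associated weak-noise (Euler--Lagrange) system, a $\sech^2$-type profile concentrated at spatial scale $\scl^{-1/2}$ near $(t,x) = (2,0)$. Substituting this form into the action and using the scaling ansatz $h_\scl(t,x) = \scl + \Psi\bigl(\scl^{1/2}(t-2), \scl^{1/2} x\bigr)$ reduces the integral to a scale-free expression that evaluates to $\tfrac43$.

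\textbf{Main obstacle.} The upper bound is the harder direction, and the crux is producing a sufficiently accurate trial profile: a soft scaling argument with a crude ansatz easily gives $\rateone(\scl) \leq C \scl^{3/2}$ for some $C < \infty$, but pinning down the sharp constant $\tfrac43$ essentially requires identifying the soliton minimizer---which is the limit shape the paper also establishes. A secondary subtlety is the narrow-wedge singularity at $t = 0$, for which the action must be regularized, e.g.\ by splitting $[0,2]$ at a cutoff $\cuttime > 0$ and showing that the contribution from $[0,\cuttime]$ is negligible as $\cuttime \to 0$.
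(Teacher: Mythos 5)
Your overall two-sided strategy is sound, but each half has a concrete issue, and both differ from the paper's route in ways worth noting.

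\textbf{Lower bound.} You propose replacing the paper's PDE analysis by Markov's inequality against the $\delta$-Bose-gas moments, choosing $n\sim 2\sqrt\scl/\e$. This is a genuinely different approach, close in spirit to the Lyapunov-exponent method of \cite{das2021fractional,ghosal2020lyapunov} for the long-time upper tail. The arithmetic does produce $\tfrac43\scl^{3/2}$, but the pivotal claim $\e\log\E[(\sqrt{4\pi}Z_\e(2,0))^n]=\tfrac{\e^3 n^3}{12}(1+o(1))$ is not a known off-the-shelf statement in the regime needed here ($t=2$ fixed, $\e\to0$, $n\to\infty$, $\e n\to\infty$). The standard result $t^{-1}\log \E[Z^n]\to n(n^2-1)/24$ is a $t\to\infty$ asymptotic at fixed $n$; you are invoking a joint limit in which the ground state dominates while the number of competing excited Bethe-states grows with $n$. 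Establishing this rigorously is itself substantial work, comparable to what the paper instead achieves through Proposition \ref{p.l2tol2} (the near-optimal $L^2\to L^2$ bound $\|P(\rho;s\to t)\|\le\exp(\int_s^t F(\rho(r,\cdot))\,dr)$) combined with the Gagliardo--Nirenberg bound on $F$ in Lemma \ref{lem:potbd}. The paper deliberately avoids exact moment formulas precisely so that the argument is robust and extends to other initial data (see the remark after Theorem \ref{thm:limshape}); your approach trades that robustness for access to $\delta$-Bose-gas technology.

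\textbf{Upper bound.} Here there is a genuine error in the proposed scaling ansatz. Writing $h_\scl(t,x)=\scl+\Psi(\scl^{1/2}(t-2),\scl^{1/2}x)$ localizes the deviation in a $\scl^{-1/2}\times\scl^{-1/2}$ spacetime window near $(2,0)$. Substituting this into $\rho=\partial_t h-\tfrac12\partial_{xx}h-\tfrac12(\partial_x h)^2$ and integrating gives $\tfrac12\int\rho^2\, dt\,dx = O(\scl)$, not $O(\scl^{3/2})$ --- so an ansatz of this shape cannot even produce the right power of $\scl$, let alone the sharp constant. The optimal soliton is narrow in space (width $\scl^{-1/2}$) but persists over the entire time interval $[0,2]$; equivalently, $\rho(t,x)\approx \scl\sech^2(\scl^{1/2}x)$ for \emph{all} $t\in[0,2]$, which yields $\tfrac12\|\rho\|^2_{L^2([0,2]\times\R)}\approx \tfrac43\scl^{3/2}$. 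This is exactly the paper's trial: a time-independent $\varphi=(1+\zeta)\sech^2$, analyzed by showing $\Zfn(\varphi;2\scl,0)\sim e^{2\scl F(\varphi)}$ via Lemma \ref{lem:bboptime}. Your description of the optimizer as a $\sech^2$-profile at spatial scale $\scl^{-1/2}$ is right, but the claim that it is localized near $t=2$ (and the resulting time rescaling) must be dropped.

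\textbf{Summary.} The moment route for the lower bound is interesting but its key estimate is unproven and nontrivial; the upper-bound ansatz as written does not yield the $\scl^{3/2}$ scaling and needs to be replaced by a time-uniform soliton, as in the paper.
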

\indentation 
The second result of the current paper proves the limit shape of (the solution of) the \ac{KPZ} equation under the  weak noise scaling and the deep upper tail conditioning. This limit shape was predicted in the physics works \cite{kolokolov2007optimal, kolokolov2009explicit, kamenev2016short, meerson2016large, hartmann2019optimal}.
\begin{thm}\label{thm:limshape}
Define $h_{\e, \lambda} = \lambda^{-1} h_\e (t, \lambda^{\frac{1}{2}} x)$. For arbitrary fixed $\delta > 0$, we have
\begin{equation*}
\lim_{\lambda \to \infty}\lim_{\e \to 0}\mathbb{P}\big[\norm{h_{\e, \lambda} - \hfn_*}_{L^\infty ([\delta, 2] \times [-\delta^{-1}, \delta^{-1}])} < \delta\, \big|\, h_\e (2, 0) + \log \sqrt{4\pi} \geq \scl \big] = 1.
\end{equation*}
Here, we use $\|\Cdot\|_{L^\infty(D)}$ to denote the $L^\infty$ norm on the domain $D$. The limit shape $\hfn_*$ is given by
\begin{equation}
\label{eq:h*}
\hfn_*(t, x) := 
\begin{cases}
-|x| + \frac{t}{2}, &\text{when } |x| \leq t, \\
-\frac{x^2}{2t}, &\text{when } |x| \geq t.
\end{cases}
\end{equation}
See Figure \ref{fig:shape} for illustration.
\end{thm}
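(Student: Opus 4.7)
The plan is to combine a process-level \ac{LDP} for $h_\e$ with a vanishing-viscosity analysis of the Freidlin--Wentzell minimizers under the scaling $\widetilde h_\scl(t,x):=\scl^{-1}h(t,\scl^{1/2}x)$. The overall picture: for each fixed $\scl$, the conditional law of $h_\e$ concentrates on the set $\mathcal M_\scl$ of minimizers of the Freidlin--Wentzell rate function subject to $h(2,0)+\log\sqrt{4\pi}\geq\scl$; after rescaling, these minimizers converge as $\scl\to\infty$ to the function $\hfn_*$ in~\eqref{eq:h*}, which is the vanishing-viscosity (entropy) solution of the inviscid Hamilton--Jacobi problem with narrow-wedge initial data and value $1$ at $(2,0)$.

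\emph{Step~1 (concentration for fixed $\scl$).} Starting from a process-level \ac{LDP} for $h_\e$ on $(0,T]\times\R$ (an upgrade of \eqref{eq:rateupper} provided by the Freidlin--Wentzell theory of \cite{lin2021short}), a standard LDP concentration argument yields
\begin{equation*}
\lim_{\e\to 0}\mathbb{P}\bigl[\,\dist_\infty(h_{\e,\scl},\widetilde{\mathcal M}_\scl)>\delta'\,\bigm|\,h_\e(2,0)+\log\sqrt{4\pi}\geq\scl\,\bigr] \;=\; 0,
\end{equation*}
where $\widetilde{\mathcal M}_\scl$ is the rescaled image of $\mathcal M_\scl$ and $\dist_\infty$ denotes the $L^\infty$ distance on $[\delta,2]\times[-\delta^{-1},\delta^{-1}]$. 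This rests on matching upper and lower bounds: paths $\delta'$-away from $\widetilde{\mathcal M}_\scl$ carry an extra cost strictly larger than $\rateone(\scl)$, while the tail event has probability of order $e^{-\rateone(\scl)/\e+o(1/\e)}$ by \eqref{eq:rateupper}.

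\emph{Step~2 (limit shape of the rescaled minimizers).} The scaling is adapted to the KPZ nonlinearity, so that $I(h^{(\scl)})=\scl^{5/2}I_\scl(\widetilde h^{(\scl)})$ with
\begin{equation*}
I_\scl(\widetilde h) \;=\; \tfrac12\int_0^2\!\int_\R\bigl(\partial_t\widetilde h-\tfrac{1}{2\scl}\partial_{xx}\widetilde h-\tfrac12(\partial_x\widetilde h)^2\bigr)^2 dx\,dt,
\end{equation*}
subject to the (rescaled) narrow-wedge initial data and $\widetilde h(2,0)\geq 1-\scl^{-1}\log\sqrt{4\pi}$. By Theorem~\ref{thm:uptail} the infimum equals $\scl^{-5/2}\rateone(\scl)=\tfrac{4}{3\scl}+o(\scl^{-1})$. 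As $\scl\to\infty$ this becomes a vanishing-viscosity problem: the viscous term $\tfrac{1}{2\scl}\partial_{xx}$ disappears, and any subsequential limit $\widetilde h^{(\infty)}$ is forced to satisfy the inviscid Hamilton--Jacobi equation $\partial_t\widetilde h=\tfrac12(\partial_x\widetilde h)^2$ on $(0,2]\times\R$, with narrow-wedge initial data and $\widetilde h^{(\infty)}(2,0)=1$, and with the entropy condition inherited from the viscous regularization. The method of characteristics together with a matched-asymptotics (initial boundary-layer) computation singles out $\hfn_*$ in~\eqref{eq:h*} as the unique such solution; combined with equicontinuity of $\widetilde h^{(\scl)}$ on $[\delta,2]\times[-\delta^{-1},\delta^{-1}]$, one obtains $\widetilde h^{(\scl)}\to\hfn_*$ in $L^\infty$ on this set.

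The main obstacle is Step~2: rigorously carrying out the vanishing-viscosity / matched-asymptotics passage requires $\scl$-uniform a priori bounds and equicontinuity for the rescaled minimizers away from the initial singular layer, a $\Gamma$-type convergence of $I_\scl$ to the inviscid Hamilton--Jacobi functional (with the narrow-wedge boundary contribution handled separately), and uniqueness of $\hfn_*$ as the entropy minimizer under the one-point constraint. The singularity of the narrow-wedge data at $t=0$ is the delicate point and is exactly why the theorem restricts the $L^\infty$ norm to $[\delta,2]\times[-\delta^{-1},\delta^{-1}]$. Putting Steps~1 and~2 together---first $\e\to 0$, then $\scl\to\infty$---yields the theorem.
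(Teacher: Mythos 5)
Your Step~1 is in line with the paper (cf.\ Proposition~\ref{prop:hfncalK}): for each fixed $\scl$, the conditional law of $h_\e$ concentrates on rescaled images of the Freidlin--Wentzell minimizers, and the scaling factor $\scl^{5/2}$ in your $I_\scl$ is correct. The problem is Step~2, which contains a genuine error rather than merely a technical gap.

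The shape $\hfn_*$ is \emph{not} the entropy (vanishing-viscosity) solution of the inviscid Hamilton--Jacobi equation $\partial_t h = \tfrac12(\partial_x h)^2$ with narrow-wedge initial data. In terms of $u=\partial_x h$, that equation is $\partial_t u - u\,\partial_x u = 0$; at the corner of $\hfn_*$ the slope jumps from $u^-=+1$ to $u^+=-1$, so the characteristic speeds satisfy $-u^- < -u^+$, i.e.\ characteristics flow \emph{out of} the discontinuity. This is a rarefaction configuration, which vanishing viscosity would \emph{smooth out}, not sustain. The entropy solution with narrow-wedge data is simply $h(t,x)=-x^2/(2t)$ (the Lax--Oleinik formula applied to a point mass), which has $\widetilde h(2,0)=0$ and therefore fails the conditioning constraint $\widetilde h(2,0)=1$. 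There is no ``entropy solution with narrow-wedge data and $\widetilde h(2,0)=1$''; the problem you pose in Step~2 is overdetermined and has no solution.

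What actually selects $\hfn_*$ is not an entropy condition but the optimal \emph{forcing}. Although $I_\scl(\widetilde h^{(\scl)})\sim \tfrac{4}{3}\scl^{-1}\to 0$ so that the $L^2$ norm of the residual $\dev = \partial_t\widetilde h - \tfrac{1}{2\scl}\partial_{xx}\widetilde h - \tfrac12(\partial_x\widetilde h)^2$ tends to zero, that residual concentrates sharply near $x=0$, where after undoing the scaling it looks like the $\sech^2$ profile $\rho_*$, and the spatially integrated forcing per unit time at $x=0$ is what lifts $\widetilde h(t,0)$ to $t/2$; away from $x=0$ the evolution is then free HJ, giving the caustic/parabola matching in~\eqref{eq:h*}. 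This is exactly how the paper proceeds: Proposition~\ref{prop:SD} and Proposition~\ref{prop:devmdist} pin down the minimizers $\calK_\scl$ and show they converge to $\rho_*(x)=\sech^2 x$; Proposition~\ref{prop:uniformdist} (equicontinuity of $\rho\mapsto\hfn_\scl(\rho)$) lets one replace the minimizer by $\rho_*$; and the final computation evaluates $\scl^{-1}\log\Zfn(\rho_*;\scl t,\scl x)$ via Feynman--Kac, a Brownian-bridge hitting-time decomposition, and Laplace's method. Without correctly identifying the forcing term, the ``$\Gamma$-convergence to the inviscid HJ functional'' you invoke in Step~2 would converge to the unforced entropy solution and contradict the one-point constraint.
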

\begin{figure}[ht]
\centering
\includegraphics[scale = 0.6]{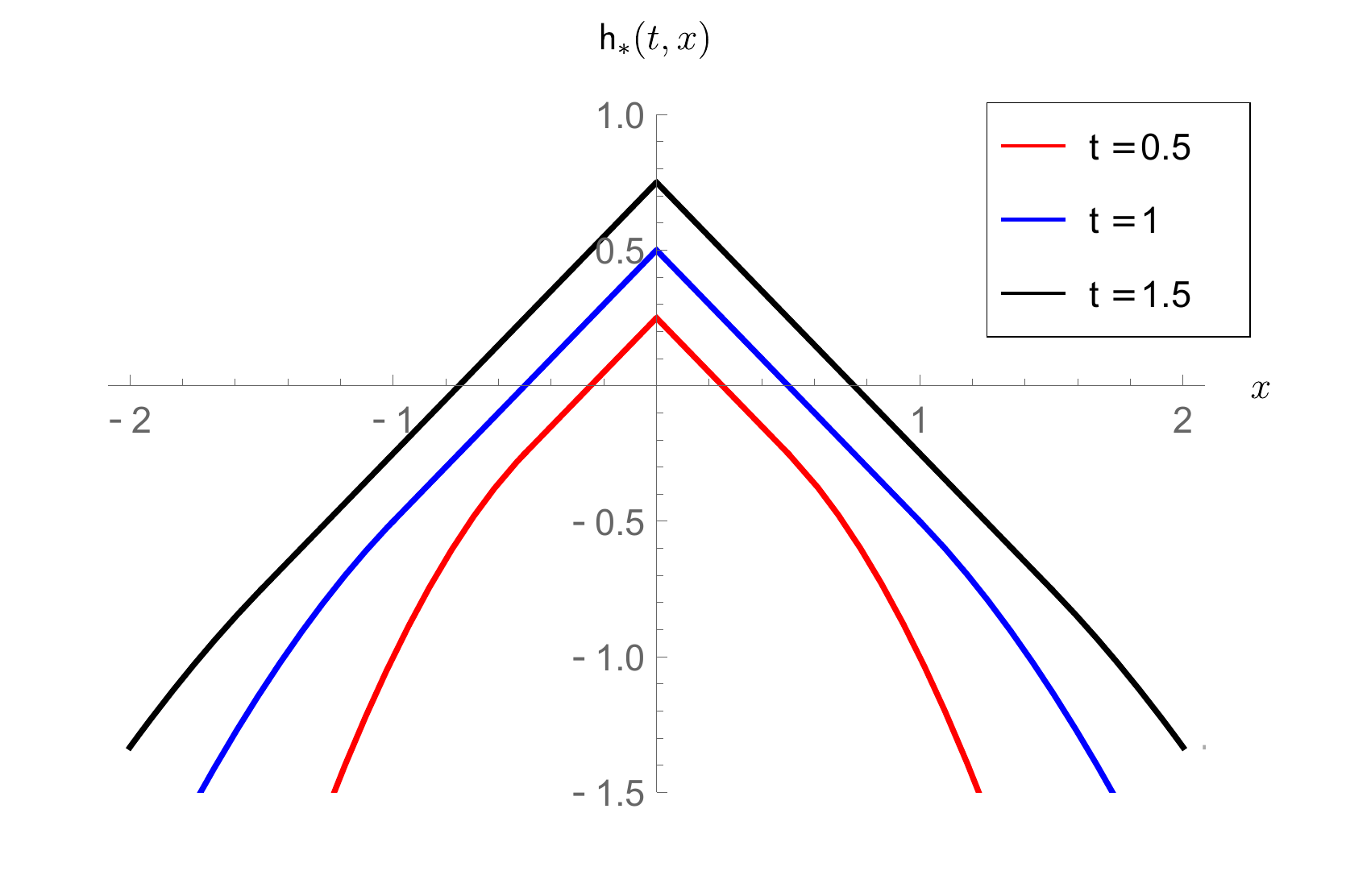}
\caption{The graph of $\hfn_* (t, \Cdot)$, when $t = 0.5, 1, 1.5$.}
\label{fig:shape}
\end{figure}

\begin{rmk}\rev{%
Theorem~\ref{thm:uptail} gives the $ \lambda\to\infty $ limit of $ h_{\e,\lambda} $ under the upper-tail conditioning.
A natural related question is to obtain the limit under the lower-tail conditioning $  h_\e (2, 0) + \log \sqrt{4\pi} \leq -\scl $.
The latter question has recently been solved in \cite{lin22}.
We emphasize that the mechanisms for the large deviations are very different in the upper- and lower-tail conditioning.
In the upper-tail conditioning, the contribution of the noise $ \xi $ concentrates around $ x=0 $; in the lower-tail conditioning, the contribution of the noise spans a wide region in spacetime.
The required analysis in the current paper and in \cite{lin22} hence differ.
}\end{rmk}

\begin{rmk}
The limit shape $\hfn_*$ was predicted earlier in the physics work \cite{kolokolov2007optimal, kolokolov2009explicit, kamenev2016short, meerson2016large} via the weak noise theory, and in \cite{hartmann2019optimal} by simulations. Recently, the physics work \cite{krajenbrink2021inverse} solved the finite $\scl$ limit shape of $h_{\e, \scl}$. Using this, they confirmed the rate function $\Phi$ discovered in \cite{le2016exact}. In the limit of large $\scl$, they discussed how the form \eqref{eq:h*} emerges in the exact solution. %
\end{rmk}
\begin{rmk}
The reason that we set $h_{\e} (2, 0) + \log \sqrt{4\pi} > \scl$ in the theorem instead of $h_\e (2, 0) > \scl$ is purely for the convenience of the proof. It makes no difference since we let $\scl \to \infty$.
\end{rmk}

\begin{rmk}
\rev{Our method does not rely on exact formulas and may apply to other initial data.
In particular, our method should apply to the flat initial data $h_\e(0, \Cdot) = 0$. 
The result in Theorem \ref{thm:uptail} remains the same for the flat initial data, while Theorem \ref{thm:limshape} holds with a different limit shape 
\begin{equation*}
\hfn^{\text{flat}}_* (t, x) := 
\begin{cases}
-|x| + \frac{t}{2}, &\text{when } |x| \leq \frac{t}{2}, \\
0, &\text{when } |x| > \frac{t}{2}.
\end{cases}
\end{equation*} 
More broadly, one can consider applying our method to a function-valued, symmetrically decreasing initial data: $ h_\text{ic}(x) = h_\text{ic}(|x|) $ and $ h_\text{ic}(|x|) $ non-increasing in $ |x| $.
We conjecture that the result in Theorem \ref{thm:uptail} remains the same; the limit shape (in-general) needs to be adjusted according to the initial data.}

\rev{Going beyond symmetrically decreasing initial data, one may see different behaviors of the deviations.
In particular, a dynamical phase transition triggered by a symmetry breaking has been predicted in \cite{janas16,smith18} (see also \cite{krajenbrink17short,hartmann2021observing,krajenbrink21flat}) for the Brownian initial data.
For such initial data, we do not expect our method to apply directly and new ideas are needed.}
\end{rmk}

\subsection{A review of the Freidlin-Wentzell \ac{LDP} for the \ac{SHE}}
\indentation 
To motivate the proof of Theorems \ref{thm:uptail} and \ref{thm:limshape}, we recall the Freidlin-Wentzell \ac{LDP} for the \ac{SHE} $\{Z_\e\}_{\e > 0}$. The result was established for the \ac{SHE} under function-valued initial data and the narrow wedge initial data in \cite[Proposition 1.7]{lin2021short}. 
For our propose, we only state the result for the narrow wedge initial data. 
\bigskip
\\
Let us first recall the definition of an \ac{LDP}. Let $\Omega$ be a topological space. We say that a sequence of $\Omega$-valued random variables $\{Y_\e\}_{\e > 0}$ \textbf{satisfies an \ac{LDP} with speed $\e^{-1}$ and rate function $I$} if  
\begin{align*}
\limsup_{\e \to 0} \e \log\P(Y_\e \in F) &\leq -\inf_{x \in F} I(x) \qquad \text{ if } F \subseteq \Omega \text{ is  closed},\\
\liminf_{\e \to 0} \e \log\P(Y_\e \in G) &\geq -\inf_{x \in G} I(x) \qquad  \text{ if } G \subseteq \Omega \text{ is  open}.
\end{align*}
\indentation 
We 
now state the \ac{LDP} for the SHE $\{Z_\e\}_{\e > 0}$. 
Fix $T > 0$ and $\delta \in (0, T)$. We take $\Omega = C([\delta, T] \times 
[-\delta^{-1}, \delta^{-1}])$ with the uniform topology and view $Z_\e$ as an $\Omega$-valued random variable. The reason that we 
avoid $t = 0$ in our choice of $\Omega$ is because $Z_\e$ starts from the Dirac-delta initial data, which is singular. For $t > 0$, the heat kernel in \eqref{eq:mild} smoothes out the singularity, so $Z_\e$ is $\Omega$-valued for any fixed $\delta > 0$; see \cite{quastel2011introduction}.  
\bigskip
\\
Since $Z_\e$ in \eqref{e.mild.nw} is driven by the space-time white noise $\sqrt{\e} \xi$, it would be helpful to first look at the \ac{LDP} of $\{\sqrt{\e} \xi\}_{\e > 0}$. We view $\dev \in L^2([0, T] \times \R)$ as a deviation $\sqrt{\e} \xi$. Since $\xi$ has Dirac-delta correlation function, 
we have informally $\P(\sqrt{\e}\xi \approx \dev) \approx \exp(-\frac{1}{2} \e^{-1}\|\dev\|^2_{L^2([0, T] \times \R)})$ for small $\e$. We replace the noise $\sqrt{\e} \xi$ with its deviation $\dev$ and consider the PDE
\begin{equation}\label{eq:pdeintro}
\partial_t \Zfn = \frac{1}{2} \partial_{xx} \Zfn + \dev \Zfn,\qquad \Zfn(0, x) = \delta(x),
\end{equation} 
where $\Zfn = \Zfn(\dev; t, x)$, $t \in [0, T]$ and $x \in\R$. Like the \ac{SHE}, the solution to this PDE is understood in the mild form
\begin{equation}\label{eq:pdemild}
\Zfn(\dev; t, x) = p(t, x) + \int_0^t \int_\R p(t-s, x-y) \dev(s, y) \Zfn(\dev; s, y) dsdy.
\end{equation}
By iteration of \eqref{eq:pdemild}, the solution $\Zfn(\rho)$ admits a series expansion and we have the Feynman-Kac formula
\begin{equation}\label{eq:feynmankac}
\Zfn(\rho; t, x) = \E_{0 \to x}\bigg[\exp\Big(\int_0^t \rho(s, \bb(s)) ds\Big)\bigg] p(t, x),\qquad t \in (0, T] \times \R, 
\end{equation}
where $\bb$ is a Brownian bridge such that $\bb(0) = 0$ and $\bb(t) = x$. 
\bigskip
\\
It is standard to see that $\Zfn(\dev)$ is a continuous function on $(0, T] \times \R$, we refer to \cite[Section 2]{lin2021short} for more detail. Fix $\delta > 0$. We view $\Zfn: \dev \mapsto \Zfn(\dev)$ as a map from $L^2([0, T] \times \R)$ to $C([\delta, T] \times [-\delta^{-1}, \delta^{-1}])$. We now state the \ac{LDP} for $\{Z_\e\}_{\e > 0}$. 
\begin{prop}\label{prop:FWLDP}
	Fix $ T<\infty$ and $0 < \delta < T$. 
	Let $ \ZZe $ be the solution to \eqref{e.mild.nw} with the Dirac-delta initial data. 
	$ \{\ZZ_\e\}_{\e > 0} $ satisfies an \ac{LDP} in $ \Csp([\delta, T] \times [-\delta^{-1}, \delta^{-1}]) $ with speed $ \e^{-1} $ and the rate function 
	\begin{equation*}
	\label{e.rate}
	\rate(f) := \inf\big\{\tfrac{1}{2}\|\dev\|_{L^2([0, T] \times \R)}^2 \,:\, \dev \in \Lsp^2([0, T] \times \R),  \Zfn(\dev) = f \big\},\quad f \in C([\delta, T] \times [-\delta^{-1}, \delta^{-1}]),
	\end{equation*}
where $\Zfn(\dev)$ is the unique solution to 
$ \partial_t \Zfn = \frac12 \partial_{xx} \Zfn + \dev \Zfn $ with the Dirac-delta initial data $ \Zfn(0,x) = \delta(x) $.
\end{prop}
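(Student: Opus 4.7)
The plan is to establish the LDP by a mollification-and-contraction argument: smooth the noise at scale $\kappa > 0$, derive the LDP for the smoothed system from Schilder's theorem plus the contraction principle, and then remove the mollification via exponential equivalence. The direct application of the contraction principle to $\sqrt{\e}\xi$ fails because $\xi$ lives only in negative Sobolev spaces while the solution map $\dev\mapsto\Zfn(\dev)$ is defined for $\dev\in L^2$, so the mollification is needed to bring the noise into $L^2$.

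Step 1 (LDP for the mollified system). Fix a space-time mollifier $\eta_\kappa$ and set $\xi_\kappa := \xi * \eta_\kappa$; let $Z^\kappa_\e$ denote the solution of the SHE driven by $\sqrt{\e}\xi_\kappa$. Since $\xi_\kappa$ is a centered Gaussian field with smooth covariance, $\sqrt{\e}\xi_\kappa$ satisfies an LDP in $L^2([0,T]\times\R)$ via Schilder's theorem (composed with the continuous convolution operator). Next one shows that the deterministic solution map $\dev \mapsto \Zfn(\dev)$ is continuous from $L^2([0,T]\times\R)$ to $C([\delta,T]\times[-\delta^{-1},\delta^{-1}])$: using the Feynman-Kac representation~\eqref{eq:feynmankac}, the modulus $|\Zfn(\dev_1;t,x)-\Zfn(\dev_2;t,x)|$ can be bounded in terms of $\|\dev_1-\dev_2\|_{L^2([0,T]\times\R)}$ by exploiting that the Brownian-bridge occupation measure on $[\delta,T]$ admits a bounded density on compacts. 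The contraction principle then yields an LDP for $\{Z^\kappa_\e\}_{\e>0}$ with rate function $I^\kappa(f) := \inf\{\tfrac12\|\dev\|_{L^2}^2 : \Zfn(\dev*\eta_\kappa) = f\}$.

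Step 2 (Exponential equivalence). The main technical step is
\[
\lim_{\kappa \to 0}\limsup_{\e\to 0}\, \e\log\P\bigl(\|Z_\e - Z^\kappa_\e\|_{C([\delta,T]\times[-\delta^{-1},\delta^{-1}])} > \theta\bigr) = -\infty, \qquad \theta > 0.
\]
Writing the Wiener-chaos decomposition $Z_\e - Z^\kappa_\e = \sum_{n \geq 1} \e^{n/2} J_n^\kappa$, where $J_n^\kappa$ is an $n$-th order multiple Wiener integral with a deterministic kernel converging to $0$ in $L^2$ as $\kappa \to 0$, the hypercontractivity of Wiener chaos converts $L^2$-bounds on the kernels into $L^p$-bounds on $J_n^\kappa$ for every $p < \infty$. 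Summing over $n$ and optimizing over $p$ yields the exponential tail bound. A Kolmogorov-type continuity estimate (again uniform in $\e$) is needed to promote the pointwise bounds to the $C(D)$-norm.

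Step 3 (Passing to the limit). The rate functions $I^\kappa$ converge to $I$ in the sense required by the standard approximation lemma for LDPs, which follows from the $L^2$-continuity of $\dev \mapsto \Zfn(\dev)$ from Step 1 together with $\dev * \eta_\kappa \to \dev$ in $L^2$. Combining Steps 1--3 produces the LDP for $\{Z_\e\}_{\e>0}$ with rate function $I$. The hard part will be the exponential equivalence in Step 2: the singular nature of the space-time white noise and the multiplicative structure of the SHE make uniform-in-$\e$ control of $Z_\e - Z^\kappa_\e$ delicate, and one has to combine the smoothing of the heat kernel in the mild formulation with the Gaussian/hypercontractive structure of the chaos, while additionally handling the singularity of the narrow-wedge initial data near $t = 0$.
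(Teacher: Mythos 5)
The paper does not give a self-contained proof here: it delegates entirely to \cite[Proposition 1.7]{lin2021short}. From the way that reference is used elsewhere in this paper (see the proof of Lemma~\ref{lem:equicontinuity}, which invokes \cite[Lemma 3.7]{lin2021short} to produce continuous maps $\varphi_N$ approximating $\Zfn$ uniformly on sublevel sets $\{\tfrac12\|\dev\|_{L^2}^2\leq r\}$), one can see that \cite{lin2021short} argues via the \emph{extended contraction principle} on an abstract Wiener space: embed $L^2([0,T]\times\R)$ as the Cameron--Martin space of $(\bana,\mu)$, apply Schilder's theorem to $\sqrt{\e}\xi$ in $\bana$, approximate the (non-continuous) solution map $\bana\to C([\delta,T]\times[-\delta^{-1},\delta^{-1}])$ by continuous maps built from truncated Wiener--It\^o chaos, and invoke the approximation form of the contraction principle. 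Your plan is the Azencott-style alternative: mollify, contract, and remove the mollification by exponential equivalence. Both routes lean on chaos expansions and hypercontractivity; the difference is where the approximation work happens (at the level of the noise in yours, at the level of the solution operator in theirs), and yours carries the additional burden of a genuine exponential-equivalence estimate rather than a uniform-on-compacts approximation.

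There is, however, a concrete defect in your Step~1 as written. The mollified noise $\xi_\kappa=\xi*\eta_\kappa$ is \emph{not} an element of $L^2([0,T]\times\R)$ almost surely: it is smooth with finite pointwise variance, but the spatial domain is all of $\R$, so $\E\|\xi_\kappa\|_{L^2([0,T]\times\R)}^2=\int_0^T\!\int_\R\|\eta_\kappa\|_{L^2}^2\,\d x\,\d t=\infty$. Consequently ``Schilder's theorem composed with the continuous convolution operator'' cannot produce an LDP for $\sqrt{\e}\xi_\kappa$ in $L^2([0,T]\times\R)$, and the contraction step that is supposed to hand you $I^\kappa(f)=\inf\{\tfrac12\|\dev\|_{L^2}^2:\Zfn(\dev*\eta_\kappa)=f\}$ is not available in that space. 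You would have to carry the noise through a weighted $L^2$ space or a space of locally square-integrable functions (equivalently, start from Schilder in $\bana$ and track which spaces $\xi_\kappa$ genuinely inhabits), and then verify that the unweighted $L^2$-norm reappears in the rate function after contraction; none of this is automatic. Your Step~2 also correctly flags the hard part but understates it: to beat the $(p-1)^{n/2}$ growth from hypercontractivity when $p\sim\e^{-1}$, the $L^2$-convergence of the chaos kernels $\|f_n-f_n^\kappa\|_{L^2}\to0$ must be quantified \emph{uniformly in $n$} against the $1/\sqrt{n!}$ decay, and the near-$t=0$ singularity of the Dirac initial data compounds this. Neither issue is obviously fatal, but as stated the proposal does not yet close.
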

\begin{proof}
This is a direct consequence of \cite[Proposition 1.7]{lin2021short} Part (b). 
\end{proof}
\indentation Since $Z_\e = e^{h_\e}$, we have $\P[h_\e (2, 0) + \log \sqrt{4\pi} \geq \scl] = \P[Z_\e (2, 0) \geq \frac{1}{\sqrt{4\pi}} e^{\scl}]$. By \eqref{eq:rateupper}, we have $\Phi(\scl) = -\lim_{\e \to 0} \e \log \P[Z_\e (2, 0) \geq \frac{1}{\sqrt{4\pi}} e^\scl]$.
Taking  $T = 2$ in Proposition \ref{prop:FWLDP} and applying contraction principle yield that for $\scl \geq 0$, 
\begin{equation}
\rateone(\scl) =  
\label{eq:ptrate}
\inf\Big\{ \frac12 \|\dev\|_{L^2([0, 2] \times \R)}^2 \, : \dev \in L^2([0, 2] \times \R), 
\, \Zfn(\dev;2,0) \geq \frac{1}{\sqrt{4\pi}} e^{\scl} \Big\}.
\end{equation}
\subsection{Proof ideas}
\label{sec:idea}
We explain the ideas for 
proving Theorems \ref{thm:uptail} and \ref{thm:limshape}. 
The first step towards proving Theorem \ref{thm:uptail} is to apply a scaling to the variational formula \eqref{eq:ptrate}. By the Feynman-Kac formula \eqref{eq:feynmankac}, we know that for arbitrary $\dev \in L^2([0, 2] \times \R)$,  
$\Zfn(\scl\dev(\scl \Cdot, \scl^{\frac{1}{2}} \Cdot); t, x) = \scl^{\frac{1}{2}} \Zfn(\dev; \scl t; \scl^{\frac{1}{2}} x).$ 
Using this relation and applying the scaling $\dev \mapsto \scl \dev(\scl \Cdot, \scl^{\frac{1}{2}} \Cdot)$, one can rewrite the rate function in \eqref{eq:ptrate} as
\begin{equation}\label{eq:scaledinf}
\Phi(\scl) = \scl^{\frac{3}{2}} \inf \Big\{\frac{1}{2\lambda}\|\dev\|_{L^2([0, 2\scl] \times \R)}^2:\dev \in L^2([0, 2\scl] \times \R),\, \Zfn(\dev; 2\scl, 0) \geq \frac{1}{\sqrt{4\pi \scl}} e^\scl\Big\}.
\end{equation} 
The value of the infimum should be of constant order, this explains the $\tfrac{3}{2}$-power in Theorem \ref{thm:uptail}. 
To prove Theorem \ref{thm:uptail}, it suffices to show that 
\begin{equation}\label{eq:heuristic1}
\lim_{\scl \to \infty}\inf \Big\{\frac{1}{2\lambda}\|\dev\|_{L^2([0, 2\scl] \times \R)}^2: \dev \in L^2([0, 2\scl] \times \R),\, \Zfn(\dev; 2\scl, 0) \geq \frac{1}{\sqrt{4\pi \scl}} e^\scl\Big\} = \frac{4}{3}.
\end{equation}
We explain why the $\frac{4}{3}$ appears on the right hand side. To motivate the discussion, 
let us assume that the minimizers (there might be more than one) of \eqref{eq:heuristic1} become asymptotically \emph{time-independent} as $\scl \to \infty$. 
Under this assumption, 
we only need to consider $\rho$ such that $\dev(t, \Cdot) = \varphi(\Cdot)$ for $t \in [0, 2\scl]$ and some $\varphi:\R \to \R$. So \eqref{eq:heuristic1} simplifies to 
\begin{equation}\label{eq:heuristic2}
\lim_{\scl \to \infty} \inf\Big\{ \|\varphi\|_{L^2(\R)}^2: \varphi \in L^2(\R),\, \Zfn(\varphi; 2\scl, 0) \geq \frac{1}{\sqrt{4\pi \scl}} e^{\scl}\Big\}  = \frac{4}{3}.
\end{equation}  
Recall that $\Zfn(\varphi)$ solves the PDE $\partial_t \Zfn = \frac12 \partial_{xx} \Zfn + \varphi \Zfn$. Since $\varphi$ does not depend on time, we treat $\Zfn(t)$ as function-valued and view the PDE as a function-valued ODE  
$\partial_t \Zfn(t) = A^\varphi \Zfn(t)$, where $A^\varphi := \frac{1}{2} \partial_{xx} + \varphi$ is a Schr\"{o}dinger-type operator.  
Set the largest eigenvalue of $A^\varphi$ 
to be $F(\varphi)$, whose expression is given in \eqref{eq:funcF}. It is natural to expect that $\Zfn(\rho; t, 0)$ at large time $t$ grows as $\exp(tF(\varphi))$.
In particular, we have $\Zfn(\varphi; 2\scl, 0) \sim \exp(2\scl F(\varphi))$ as $\scl \to \infty$. This suggests that 
\begin{equation}\label{eq:heuristic3}
\text{LHS of } \eqref{eq:heuristic2}  = \inf\Big\{\|\varphi\|^2_{L^2(\R)}: F(\varphi) \geq \frac{1}{2}\Big\}.
\end{equation}  
The problem of finding the minimizers of \eqref{eq:heuristic3} can be transformed into understanding when the equality of the Gagliardo–Nirenberg-Sobolev inequality holds, see Lemma \ref{lem:potbd}. It turns out that the minimizers of the infimum are given by 
$\sech^2(x + v)$, where $v$ is an arbitrary constant. Since $\|\sech^2(x+v)\|_{L^2(\R)}^2 = \frac{4}{3}$ for all $v$, we know that the right hand side of \eqref{eq:heuristic3} equals $\tfrac{4}{3}$. This explains why Theorem \ref{thm:uptail} should hold.
\bigskip
\\
We proceed to explain the idea for proving Theorem \ref{thm:limshape}. Under the time-independence assumption, it is natural to believe that the minimizers of the infimum in \eqref{eq:scaledinf} converges to the minimizers of the infimum in \eqref{eq:heuristic3} as $\scl \to \infty$. However, from the previous paragraph, the minimizer of the infimum in \eqref{eq:heuristic3} is not unique and this causes \rev{a} problem to our analysis. To resolve this, we show that any minimizer of the infimum in \eqref{eq:scaledinf} has to be symmetric in the $x$ coordinate (in fact the function should reach its peak at $0$ and decrease on both sides of $0$). The proof of this is carried out in Section \ref{sec:sdf}. Consequently, we see that the minimizer of \eqref{eq:heuristic1} should converge to $\sech^2 x$ as $\scl \to \infty$.
\bigskip
\\
We denote $\devm(t, x) := \devm (x) := \sech^2 x$. By the Freidlin-Wentzell \ac{LDP} stated in Proposition \ref{prop:FWLDP} and scaling, one can show that for fixed $\scl$,  as $\e \to 0$, the space-time path $h_{\e, \scl}$ concentrates around $\scl^{-1}\log \Zfn(\devm; \scl \Cdot, \scl \Cdot)$. 
By the Feynman-Kac formula \eqref{eq:feynmankac},  
\begin{equation}\label{eq:hfnlimintro}
\scl^{-1} \log \Zfn (\dev_*; \scl t, \scl x) = \scl^{-1} \log \E_{\scl x \to 0}\bigg[\exp\Big(\int_0^{\scl t} \devm(\bb(s)) ds\Big)\bigg] - \frac{x^2}{2t} - \scl^{-1} \log \sqrt{4\pi}.
\end{equation}
We seek to compute the $\scl \to \infty$ limit of the first term on the right hand side of \eqref{eq:hfnlimintro}. 
\bigskip
\\
Assume $x \neq 0$. The Brownian bridge starts from $\scl x$, which is far way from $0$. Since $\devm (x) = \sech^2 (x)$ decays exponentially as $|x| \to \infty$, the path of $\bb$ contributes little to the integral until it arrives near $0$. 
Fix $s \in [0, t]$, the probability of the event that the first time $\bb$ hits $0$ around time $\scl s$ is approximately $\exp(-\frac{\scl x^2 (t-s)}{2 s t})$. 
After arriving at $0$, 
we have
$\E_{0 \to 0}[\exp(\int_{\scl s}^{\scl t} \devm(\bb(r)) dr)] \approx \exp(\frac{\scl (t-s)}{2})$ for large $\scl$. 
Optimizing over $s \in [0, t]$, we expect that 
\begin{equation*}
\lim_{\scl \to \infty} \scl^{-1} \log \E_{\scl x \to 0}\bigg[\exp\Big(\int_0^{\scl t} \devm(\bb(s)) ds\Big)\bigg] = \sup_{s \in [0, t]}\Big(
-\frac{x^2 (t-s)}{2 s t} + \frac{t-s}{2}\Big) = \hfn_*(t, x) + \frac{x^2}{2t}. 
\end{equation*}
Using this together with \eqref{eq:hfnlimintro} shows that $\scl^{-1} \log\Zfn(\devm; \scl \Cdot, \scl \Cdot)$ converges to $\hfn_*$ defined in \eqref{eq:h*}.
\subsection{Technical Difficulties}
In this section, we emphasize some of the technical difficulties in our proof. Denote the set of minimizers of \eqref{eq:scaledinf} to be $\calK_\scl$. As mentioned in the previous section, one important step in our proof is to show that $\calK_\scl$ converges to $\rho_*$ as $\scl \to \infty$. We explain the proof ingredients of it in Sections \ref{sec:property} and \ref{sec:convergence}. In Section \ref{sec:equicontinuity}, we describe a technical issue for proving the limit shape.   
\subsubsection{Properties of $\calK_\scl$}\label{sec:property} 
To prove that the elements of $\calK_\scl$ converge to $\rho_*$ as $\scl \to \infty$, 
we first need to show that $\calK_\scl$ is not empty, i.e. the minimizer of the infimum in \eqref{eq:scaledinf} exists. To prove this, we need 1). the compactness of the level sets  $\{\rho: \|\rho\|_{L^2([0, 2\scl] \times \R)}^2 \leq r\}$; 2). the continuity of the map $\rho \mapsto \Zfn(\rho; 2\scl, 0)$. Unfortunately, the level sets $\{\rho: \|\rho\|_{L^2([0, 2\scl] \times \R)}^2 \leq r\}$ are not compact in $L^2([0, 2\scl] \times \R)$. To overcome this, we consider an abstract Wiener space $(\mathcal{B}, \mu)$ such that $L^2([0, 2\scl] \times \R) \subseteq \mathcal{B}$ is the Cameron-Martin space. This gives us the compactness of $\{\rho: \|\rho\|_{L^2([0, 2\scl] \times \R)}^2 \leq r\}$ in $\mathcal{B}$ and it preserves the continuity of the map $\rho \mapsto \Zfn(\rho; 2 \scl, 0)$.
\bigskip
\\
We also need to show that the elements of $\calK_\scl$ are symmetric and decreasing in space (see Section \ref{sec:sdf} for the precise definition). We prove this by considering the symmetric and decreasing rearrangement of $\rho$. Using the rearrangement inequalities from \cite{brascamp1974general, LiebLoss}, we show that the symmetric and decreasing rearrangement of $\rho$ preserves the $L^2$ norm while it increases the value of $\mathsf{Z}(\rho; 2\scl, 0)$. This implies that the elements in $\calK_\scl$ must be symmetric and decreasing in space.
\subsubsection{$\calK_\scl$ converges to $\rho_*$}\label{sec:convergence} 
The key innovations for proving the convergence are 1). A near-optimal upper bound of $\Zfn(\rho; 2\scl, 0)$ in terms of an exponential integral of the ground state $F(\rho)$, see Proposition \ref{p.l2tol2}; 2). A perturbative analysis of $F$, see Lemma \ref{lem:reversebd}. 
\bigskip
\\
For 1), one can obtain an upper bound of $\Zfn(\rho; 2\scl, 0)$ via iterating \eqref{eq:pdemild} and applying the Cauchy-Schwarz inequality, see Lemma \ref{l.ptL2}. However, such bounds are not optimal. In Proposition \ref{p.l2tol2}, we prove a near-optimal bound of $\Zfn(\rho; 2\scl, 0)$. The proof of the proposition relies on the time-dependent semigroup $\partial_t - \frac{1}{2} \partial_{xx} - \rho$, an energy estimate when $\rho$ is smooth and compactly supported, and the method of  approximation for $\rho \in L^2([0, 2\scl] \times \R)$, which is accomplished in Lemmas \ref{l.ptL2}, \ref{lem:continuity} and \ref{lem:Fcont}.  
\bigskip
\\
We explain 2) in more detail. Using the $L^4$ Gagaliardo-Nirenberg-Sobolev inequality, we obtain an optimal upper bound of $F(\varphi)$ as well as identify its optimizers; see Lemma \ref{lem:potbd}. The optimizer in Lemma \ref{lem:potbd} is only unique up to shifting. Our perturbative analysis says that when $\varphi$ is symmetric and decreasing and $F(\varphi)$ is near its optimal upper bound, $\varphi$ must be close to the symmetric optimizer. The proof of this is through a weak-convergence argument.
\bigskip
\\ 
The proof of the convergence is carried out in Proposition \ref{prop:devmdist}. Combining items 1) and 2), the condition $\rho \in \calK_\scl$ implies that for most $r \in [0, 2\scl]$, $\|\rho(r, \Cdot)\|_{L^2(\R)}$ is close to $\|\rho_*\|_{L^2(\R)}$ and $F(\rho(r, \Cdot))$ is close to $F(\rho_*)$, which is the optimal upper bound. This implies the convergence of $\calK_\scl$ to $\rho_*$, as $\scl \to \infty$.
\subsubsection{The equi-continuity}\label{sec:equicontinuity}
By the Freidlin-Wentzell LDP, we can show that for fixed $\scl > 0$, as $\e \to 0$, $h_{\e, \scl}$ concentrates around the set of functions $\scl^{-1} \log \scl^{\frac{1}{2}}\Zfn(\calK_\scl; \scl \Cdot, \scl \Cdot)$. In Section \ref{sec:convergence}, we explain how to prove that $\calK_\scl$ converges to $\rho_*$, as $\scl \to \infty$. To prove Theorem \ref{thm:limshape}, we need to show additionally that the distance between $\scl^{-1} \log \scl^{\frac{1}{2}} \Zfn(\calK_\scl; \scl \Cdot, \scl \Cdot)$ and $\scl^{-1} \log \scl^{\frac{1}{2}} \Zfn(\rho_*; \scl \Cdot, \scl \Cdot)$ is small as $\scl \to \infty$. This is proved by establishing the equi-continuity of the maps $f_\scl : \rho \mapsto  \scl^{-1} \log \scl^{\frac{1}{2}} \Zfn(\rho; \scl \Cdot, \scl \Cdot)$, see Proposition \ref{prop:uniformdist}.

\subsection*{Acknowledgments.}
\rev{We thank Ivan Corwin, Alexandre Krajenbrink, Pierre Le Doussal, and Baruch Meerson for their helpful comments on the presentation of this work.
We thank the referees for their useful comments on the manuscript, especially for pointing out an error in Lemma~\ref{lem:equicontinuity} in the first version of the manuscript.
The research of LCT is partially supported by the Sloan Fellowship and the NSF through DMS-1953407 and DMS-2153739.}

\subsection*{Outline of the rest of the paper} In Section \ref{sec:uptail}, we prove Theorem \ref{thm:uptail} and confirm the $\frac{3}{2}$-power law in the deep upper tail rate function. In Section \ref{sec:minimizer}, we give some detailed characterization of the minimizers in \eqref{eq:heuristic1}. We also prove that the asymptotic limit of these minimizers equals $\sech^2 x$. In Section \ref{sec:limitshape}, we establish some result about equi-continuity and prove the convergence of the left hand side of \eqref{eq:hfnlimintro} to $\hfn_*$, thus completing the proof of Theorem \ref{thm:limshape}.

\section{The $\frac{3}{2}$-power law}\label{sec:uptail}
\indentation By the discussion in Section \ref{sec:idea}, to prove Theorem \ref{thm:uptail}, it suffices to show \eqref{eq:heuristic1}.
In particular, \eqref{eq:heuristic1} follows if we can show  
\begin{align}\label{eq:limsuptail} \tag{$\mathsf{LimSup}$}
&\limsup_{\scl \to \infty}\inf \Big\{\frac{1}{2\scl}\|\dev\|_{L^2([0, 2\scl] \times \R)}^2: \dev \in L^2([0, 2\scl] \times \R),\, \Zfn(\dev; 2\scl, 0) \geq \frac{1}{\sqrt{4\pi \scl}} e^\scl\Big\} \leq \frac{4}{3},\\
\label{eq:liminftail} \tag{$\mathsf{LimInf}$}
&\liminf_{\scl \to \infty}\inf \Big\{\frac{1}{2\scl}\|\dev\|_{L^2([0, 2\scl] \times \R)}^2:\dev \in L^2([0, 2\scl] \times \R),\, \Zfn(\dev; 2\scl, 0) \geq \frac{1}{\sqrt{4\pi \scl}} e^\scl\Big\} \geq \frac{4}{3}.
\end{align}
The rest of the section is devoted to proving \eqref{eq:limsuptail} and \eqref{eq:liminftail}.
\bigskip
\\
For the rest of the paper, we will use $C = C(a_1, a_2,\dots)$ to denote a deterministic positive finite constant. The
constant may change from line to line or even within the same line, but depends only on
the designated variables $a_1, a_2,\dots$. In addition, we will denote the Brownian motion as $B$ and Brownian bridge as $\bb$. When we write $\E_{x \to y}[f(\int_a^b \bb(s) ds)]$, the expectation is taken with respect to a Brownian bridge with $\bb(a) = x$ and $\bb(b) = y$. When we write $\E_{x}[f(\int_a^b B(s) ds)]$, the expectation is taken with a Brownian motion starting from $B(a) = x$.
\subsection{Proof of \eqref{eq:limsuptail}}
The key to the proof is to consider the exponential moment of the $\bb$ subject to a time-independent potential $\varphi$. More precisely, for $\varphi \in L^2(\R)$, we consider  
\begin{equation}\label{eq:FK}
\Zfn(\varphi; 2\scl, 0) = \E_{0 \to 0}\bigg[\exp\Big(\int_0^{2\scl} \varphi(\bb(s)) ds\Big)\bigg] p(2\scl, 0).
\end{equation}	
To analyze \eqref{eq:FK},
let us recall some background knowledge from \cite[Section 4.1]{chen2010random}. Fixing a bounded continuous function $\pot: \R \to \R$, we define 
$T^\pot_t: L^2(\R) \to L^2(\R)$ as
\begin{equation*}
(T^\pot_t g)(x) = \E_{x} \bigg[\exp\Big(\int_0^t \pot(B(s)) ds\Big) g(B(t))\bigg].
\end{equation*}
By \cite[page 105]{chen2010random},  $(T^\pot_t)_{t \geq 0}$ forms a strongly-continuous, self-adjoint and non-negative semi-group on $L^2(\R)$.
Let $A^\pot$ be the generator of the semi-group $(T^\pot_t)_{t \geq 0}$. Denote the domain of the generator $A^\pot$ by $\Dom(A^\pot)$. Let $\Cc(\R)$ be the space of compactly supported smooth function. Let $H^1(\R)$ to be the Sobolev space $\{g \in L^2(\R): g' \in L^2(\R)\}$. By \cite[Theorem 4.1.2 and Lemma 4.1.3]{chen2010random}, $\Cc(\R) \subseteq \Dom(A^\pot) \subseteq H^1(\R)$. Further, for $g \in \Dom(A^\pot)$,
\begin{equation}\label{eq:gnt}
\langle A^\pot
 g, g\rangle_{L^2(\R)} = \int_\R \pot(x) g(x)^2 - \frac{1}{2} g'(x)^2 dx.
\end{equation} 
For $\pot \in L^2 (\mathbb{R})$, define 
\begin{equation}\label{eq:funcF}
F(\pot) := \sup\Big\{\int_\R \pot(x)g(x)^2 -\frac{1}{2} g'(x)^2 dx: g \in H^1(\mathbb{R}), \normL{g} = 1 \Big\}.
\end{equation}
Assume that $\pot$ is bounded and continuous. Since $\Cc(\R)$ is dense in $H^1(\R)$ (w.r.t. the $H^1$-norm), we have
\begin{equation}\label{eq:ffuncF}
F(\pot) = \sup\{\langle A^\pot g, g\rangle_{L^2(\R)}: g \in \Cc(\R), \normL{g} = 1\}.
\end{equation}
Further, we record two useful inequalities from \cite[Eq 4.1.25 and 4.1.29]{chen2010random}. For $g \in \Cc(\R)$,
\begin{equation}\label{eq:chenresult}
\exp\big(t \langle A^\pot g, g \rangle_{L^2(\R)} \big) \leq \langle T^\pot_t g, g \rangle_{L^2(\R)} \leq \exp\big(tF(\pot)\big).
\end{equation}
It is known that (see \cite[Thereom 4.1.6]{chen2010random}) for bounded continuous $\pot$, 
$$\lim_{\scl \to \infty} \rev{ \frac{1}{\scl} \log }  \E_{0}\bigg[\exp\Big(\int_{0}^{\scl} \pot(B(s)) ds\Big)\bigg] = F(\pot).$$
We prove a similar result for the Brownian bridge, allowing the starting position to deviate from zero by an amount less than $\mathcal{O}(\scl^{\frac{1}{2}})$.
\begin{lem}\label{lem:bboptime}
Fix bounded continuous $\pot: \R \to \R$. Fix $\alpha \in (0, 1/2)$. Uniformly for $|x| \leq \scl^\alpha$, 
	\begin{equation}\label{eq:bboptime1}
	\lim_{\scl \to \infty}  \frac{1}{\lambda} \log \E_{x \to 0}\bigg[\exp\Big(\int_0^{\scl} \pot(B_b(s)) ds\Big)\bigg] = F(\pot).
	\end{equation}
\end{lem}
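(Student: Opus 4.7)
The natural object is the Feynman--Kac semigroup kernel $q^\pot_t(\cdot,\cdot)$ of $T^\pot_t$. Writing the Brownian bridge expectation as a ratio, one has
\begin{equation*}
p(\scl, x) \, \E_{x \to 0}\Big[\exp\Big(\int_0^\scl \pot(\bb(s)) ds\Big)\Big] = q^\pot_\scl(x, 0).
\end{equation*}
Since $-\scl^{-1}\log p(\scl, x) = \tfrac{\log(2\pi\scl)}{2\scl} + \tfrac{x^2}{2\scl^2}$ tends to $0$ uniformly on $|x|\le \scl^\alpha$ (the first and milder use of $\alpha<1/2$), the problem reduces to showing $\scl^{-1}\log q^\pot_\scl(x,0) \to F(\pot)$ uniformly in $|x|\le \scl^\alpha$. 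The main tool will be a three-piece semigroup decomposition, with a large fixed constant $\omega>0$,
\begin{equation*}
q^\pot_\scl(x, 0) = \int\!\!\int q^\pot_\omega(x, y_1)\, q^\pot_{\scl - 2\omega}(y_1, y_2)\, q^\pot_\omega(y_2, 0)\, dy_1\, dy_2 .
\end{equation*}

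For the upper bound, apply Cauchy--Schwarz in $(y_1,y_2)$ and use the operator norm identity $\|T^\pot_t\|_{L^2(\R)\to L^2(\R)} = e^{tF(\pot)}$, valid because $F(\pot)$ is the top of the spectrum of the self-adjoint operator $A^\pot$ by \eqref{eq:ffuncF}. The $L^2$ norms of the end pieces are controlled by self-adjointness, $\|q^\pot_\omega(x,\Cdot)\|_{L^2(\R)}^2 = q^\pot_{2\omega}(x,x) \le e^{2\omega \|\pot\|_\infty}\, p(2\omega, 0)$, which is a constant independent of $x$. This yields $q^\pot_\scl(x,0)\le C(\omega,\pot)\, e^{(\scl-2\omega)F(\pot)}$, and hence the limsup bound after taking $\scl^{-1}\log$.

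For the lower bound, fix $\e>0$ and choose $g\in \Cc(\R)$ with $g\ge 0$, $\normL{g}=1$, $\mathrm{supp}\,g \subseteq [-M,M]$, and $\langle A^\pot g, g\rangle_{L^2(\R)} \ge F(\pot)-\e$, which is possible by \eqref{eq:ffuncF}. By Feynman--Kac with $\pot$ bounded, $q^\pot_\omega(x,y) \ge e^{-\omega\|\pot\|_\infty} p(\omega, y-x)$. For $y\in[-M,M]$ and $|x|\le \scl^\alpha$, the Gaussian kernel is lower bounded by $(2\pi\omega)^{-1/2}\exp(-(M+\scl^\alpha)^2/(2\omega))$, so $q^\pot_\omega(x,y) \ge c(\omega,M,\pot,g)\, e^{-\scl^{2\alpha}/\omega}\, g(y)$ pointwise in $y$, and analogously $q^\pot_\omega(y,0)\ge c'(\omega,M,\pot,g)\, g(y)$. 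Restricting $(y_1,y_2)$ to $[-M,M]^2$ in the semigroup decomposition and applying the lower bound $\langle g, T^\pot_t g\rangle_{L^2(\R)} \ge e^{t(F(\pot)-\e)}$ from \eqref{eq:chenresult} gives
\begin{equation*}
q^\pot_\scl(x,0) \ge c''\, e^{-\scl^{2\alpha}/\omega}\, e^{(\scl-2\omega)(F(\pot)-\e)}.
\end{equation*}
Taking $\scl^{-1}\log$ and then $\e\downarrow 0$ proves the liminf bound.

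The main obstacle is obtaining the uniform-in-$x$ lower bound when the bridge starts at $|x|\le\scl^\alpha$: the spatial displacement produces a Gaussian penalty $e^{-\scl^{2\alpha}/(2\omega)}$ from $p(\omega, y-x)$, which must be beaten by the exponential growth $e^{\scl F(\pot)}$. The assumption $\alpha<1/2$ is precisely what forces $\scl^{2\alpha-1}\to 0$, so that this penalty contributes $o(1)$ to $\scl^{-1}\log q^\pot_\scl(x,0)$ uniformly in $|x|\le\scl^\alpha$; everything else is routine semigroup analysis.
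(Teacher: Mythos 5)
Your proof is correct, and it follows the same strategy as the paper's: decompose the Feynman--Kac semigroup into two short end pieces of fixed width and one long middle piece, insert a near-optimal compactly-supported test function $g$ for the lower bound, and pay a Gaussian penalty of order $e^{-\scl^{2\alpha}/\omega}$ for the starting displacement $|x|\le \scl^\alpha$, which is negligible at exponential scale precisely because $\alpha<1/2$. The one place you genuinely improve on the paper's execution is the upper bound: the paper truncates the bridge to the event $\{|\bb(1)| \le \scl^2, |\bb(\scl-1)| \le \scl^2\}$, estimates the complement, and then wedges in a family of cutoff bumps $g_\scl$ before applying \eqref{eq:chenresult}; you avoid all of that by applying Cauchy--Schwarz directly to the three-piece kernel factorization and invoking $\|T^\pot_t\|_{L^2\to L^2} = e^{tF(\pot)}$ (which is itself an immediate consequence of self-adjointness and \eqref{eq:chenresult}), controlling the end-piece $L^2$ norms through the on-diagonal bound $\|q^\pot_\omega(x,\Cdot)\|_{L^2(\R)}^2 = q^\pot_{2\omega}(x,x) \le e^{2\omega\|\pot\|_\infty}p(2\omega,0)$. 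This removes the truncation and the family of bump functions and is the cleaner route; the cost is that it relies on the kernel picture of $T^\pot_t$, whereas the paper stays closer to its explicit Brownian-bridge description. Both end pieces and the middle piece match up, and the uniformity in $|x|\le\scl^\alpha$ is handled in the same way.
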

\begin{rmk}
Here we only need the $x = 0$ result for proving \eqref{eq:limsuptail}, and the result for $x \neq 0$  will be needed in Section \ref{sec:limitshape}.
\end{rmk}
\indentation We say that $\liminf_{\scl \to \infty} f_\scl (t, x) \geq g(t, x)$ uniformly for $(t, x) \in \mathcal{O}$, if for any
$\z > 0$, there exists $M$ such that $f(t, x) >  g(t, x) - \z$ for all $\scl > M$ and $(t, x) \in \mathcal{O}.$ Similarly, we say that $\limsup_{\scl \to \infty} f_\scl (t, x) \leq g(t, x)$ uniformly for $(t, x) \in \mathcal{O}$ if for any
$\z > 0$, there exists $M$ such that $f(t, x) <  g(t, x) + \z$ for all $\scl > M$ and $(t, x) \in \mathcal{O}.$
\begin{proof}[Proof of Lemma \ref{lem:bboptime}]
The idea of the proof is from \cite[Theroem 4.1.6]{chen2010random}. 
Recall that for a Brownian bridge $\bb(s)$ with $\bb(0) = x$ and $\bb(\scl) = 0$, the random variable $\bb(1)$ and $\bb(\scl -1)$ has joint probability density function 
$$
f_{\bb(1), \bb(\scl-1)} (y, z) := \frac{p(1, x-y) p(\scl-2, y-z) p(1, z)}{p(\scl, x)}. 
$$ 
Using the boundedness of $\varphi$ and the 
preceding joint density in order, we have

	\begin{align}\notag
	&\E_{x \to 0} \bigg[\exp\Big(\int_0^{\scl} \pot(B_b(s)) ds\Big)\bigg]\\
	\notag
	 &\geq \frac{1}{C}\E_{\bb(0) = x, \bb(\scl) = 0} \bigg[\exp\Big(\int_1^{\scl-1} \pot(B_b(s)) ds\Big)\bigg]\\
	\label{eq:potlowerbd}
	&=
	\frac{1}{C p(\scl, x)} \int_\R p(1, x-y) \mathbb{E}_{y}\bigg[\exp\Big(\int_1^{\scl-1} \pot(B(s)) ds\Big) p(1, B(\scl -1))\bigg] dy. 
	\end{align}
Since $\pot$ is fixed, we omit the dependence on $\pot$ 
in the constant $C$. For any compactly supported $g \in \Cc(\R)$, since $|x| \leq \scl^\alpha$, $\frac{1}{p(\scl, x)} \geq C\sqrt{\scl}$ and $p(1, x-y) \geq \frac{1}{C(g)} |g(y)| \exp(-C \scl^{2\alpha})$. Hence, 
	\begin{align*}
	&\frac{1}{p(\scl, x)}\int_\R p(1, x-y) \mathbb{E}_{y}\bigg[\exp\Big(\int_1^{\scl-1} \pot(B(s)) ds\Big) p(1, B(\scl -1))\bigg] dy\\
	&\geq \frac{\exp(-C \scl^{2\alpha})}{C(g)}\int_\R g(y) \mathbb{E}_{y}\bigg[\exp\Big(\int_1^{\scl-1} \pot(B(s)) ds\Big) g(B(\scl -1))\bigg] dy\\
	&\geq \frac{ \exp(-C \scl^{2\alpha})}{C(g)} \langle T^\pot_{\scl - 2} g, g \rangle \geq \frac{\exp(-C \scl^{2\alpha})}{C(g)} \exp\Big((\scl - 2) \langle A^\pot g, g \rangle_{L^2(\R)}\Big).
	\end{align*}
The last inequality follows from \eqref{eq:chenresult}. Inserting the above bound into 
the right hand side of \eqref{eq:potlowerbd} yields 
\begin{equation*}
\E_{x \to 0} \bigg[\exp\Big(\int_0^{\scl} \pot(B_b(s)) ds\Big)\bigg] \geq \frac{\exp(-C \scl^{2\alpha})}{C(g)} \exp\Big((\scl - 2) \langle A^\pot g, g \rangle_{L^2(\R)}\Big).
\end{equation*}
Taking the logarithm of the both sides, dividing the result by $\scl$ and sending $\scl \to \infty$, we obtain that uniformly for $|x| \leq \scl^{\alpha}$, 
$$\liminf_{\scl \to \infty} \frac{1}{\lambda} \log \E_{x \to 0}\bigg[\exp\Big(\int_0^{\scl} \pot(B_b(s)) ds\Big)\bigg] \geq \langle A^\pot g, g \rangle_{L^2(\R)}.$$
Taking the supremum of the right hand side over $g \in \Cc(\R)$ and using \eqref{eq:ffuncF}, 
we conclude that uniformly for $|x| \leq \scl^\alpha$, 
	\begin{equation*}
	\liminf_{\scl \to \infty} \frac{1}{\lambda} \log \E_{x \to 0}\bigg[\exp\Big(\int_0^{\scl} \pot(B_b(s)) ds\Big)\bigg] \geq F(\pot).
	\end{equation*}
	To prove the reverse inequality, we write 
	\begin{align*}
	&\E_{x \to 0} \bigg[\exp\Big(\int_0^{\scl} \pot(B_b(s)) ds\Big)\bigg] \\
	&= \E_{x \to 0} \bigg[\exp\Big(\int_0^{\scl} \pot(B_b(s)) ds\Big) \ind_{\{|B_b(1)| \leq \scl^2, |B_b(\scl-1)| \leq \lambda^2 \}}\bigg]\\ 
	&\quad + \E_{x \to 0} \bigg[\exp\Big(\int_0^{\scl} \pot(B_b(s)) ds\Big) \ind_{\{|B_b(1)| \geq \scl^2 \text{ or } |B_b(\scl-1)|  \geq \lambda^2 \}}\bigg] \\
	\numberthis \label{eq:optimebd1}
	&\leq \E_{x \to 0} \bigg[\exp\Big(\int_0^{\scl} \pot(B_b(s)) ds\Big) \ind_{\{|B_b(1)| \leq \scl^2, |B_b(\scl-1)| \leq \lambda^2 \}}\bigg] + C \exp(-C^{-1} \lambda^2).
	\end{align*}
Recall that $|x|\leq \scl^{\alpha}$, $\alpha \in (0, \frac{1}{2})$. The last inequality follows from the boundedness of $\pot$ together with the tail decay of  $\P[\bb(1) \geq \scl^2]$ and $\P[\bb(\scl - 1) \geq \scl^2]$.  For the first term on the right hand side of \eqref{eq:optimebd1}, 
	\begin{align*}
	&\E_{x \to 0} \bigg[\exp\Big(\int_0^{\scl} \pot(B_b(s)) ds\Big) \ind_{\{|B_b(1)| \leq \scl^2, |B_b(\scl-1)| \leq \lambda^2 \}}\bigg]\\ 
	&\leq C\E_{\bb(0) = x, \bb(\scl) = 0} \bigg[\exp\Big(\int_1^{\scl-1} \pot(B_b(s)) ds\Big) \ind_{\{|B_b(1)| \leq \scl^2, |B_b(\scl-1)| \leq \lambda^2 \}}\bigg]
	\\
	\numberthis \label{eq:optimebd2}
	&= \frac{C}{p(\scl, x)} \int_\R p(1, x-y) \ind_{\{|y| \leq \scl^2\}} \mathbb{E}_{y}\bigg[\exp\Big(\int_1^{\scl-1} \pot(B(s)) ds\Big) p(1, B(\scl -1)) \ind_{\{|B(\scl-1)| \leq \scl^2\}}\bigg] dy.
	\end{align*}
	The first inequality follows from the boundedness of $\pot$ and the second equality is due to the transition probabilities of the Brownian bridge. Let $g_\scl \in \Cc(\R)$ be such that $g_\scl (y) = 1$ for all $|y| \leq \scl^2$, $g_\scl (y) = 0$ for $|y| \geq 2\scl^2$ and $g_\scl (y) \in [0, 1]$ for all $y$. Then $p(1, x-y) \ind_{\{|y| \leq \scl^2\}} \leq g_\scl (y)$ and thus
	\begin{align*}
	&\int p(1, x-y) \ind_{\{|y| \leq \scl^2\}} \mathbb{E}_{y}\bigg[\exp\Big(\int_1^{\scl-1} \pot(B(s)) ds\Big) p(1, B(\scl -1)) \ind_{\{|B(\scl-1)| \leq \scl^2\}}\bigg] dy\\
	&\leq \int g_\scl(y) \E_{y}\bigg[\exp\Big(\int_1^{\scl-1} \pot(B(s)) ds\Big) g_\scl(B(\scl-1))\bigg] dy
	\\
	&= \langle T^\pot_{\scl - 2} g_\scl, g_\scl\rangle
	_{L^2(\R)} \leq \normL{g_\lambda}^2 \exp\big((\scl-2) F(\pot)\big).
	\end{align*}	
	The last equality follows from \eqref{eq:chenresult}. Inserting the above bound into \eqref{eq:optimebd2} and then inserting the result to \eqref{eq:optimebd1}, we conclude that uniformly for $|x| \leq \scl^\alpha$,
	\begin{equation*}
	\limsup_{\scl \to \infty} \frac{1}{\lambda} \log \E_{x \to 0}\bigg[\exp\Big(\int_0^{\scl} \pot(B_b(s)) ds\Big)\bigg] \leq F(\pot).
	\end{equation*}  
	This concludes the lemma.
\end{proof}
\indentation We are now ready to prove \eqref{eq:limsuptail}. Recall that $\rho_*(t, x) = \devm(x) = \sech^2(x)$. 
Fix arbitrary $\z > 0$ and take $x = 0$ in Lemma \ref{lem:bboptime}. We have 
\begin{align}\label{eq:limsuptail1}
\begin{split}
	\lim_{\scl \to \infty}  \frac{1}{\scl} \log \Zfn((1+\z)\devm; 2\scl, 0)  
	&=
	\lim_{\scl \to \infty}  \frac{1}{\lambda} \log \E_{0 \to 0}\bigg[\exp\Big(\int_0^{2\scl} (1+\z)\devm(B_b(s)) ds\Big)\bigg] \rev{\cdot \hk(2\scl,0)}
\\
	&= 2F((1+\z)\devm).
\end{split}
\end{align}
Referring to \eqref{eq:funcF} for the definition of $F$ and taking $g = \frac{1}{\sqrt{2}} \sech(x)$, $\|g\|_{L^2(\R)} = 1$, we have
\begin{equation*} 
F((1+\z)\devm) \geq \int_\R (1+\z) \devm(x) g(x)^2 - \frac{1}{2} g'(x)^2 dx = \frac{1}{2} + \frac{2}{3} \z. 
\end{equation*}
This, together with \eqref{eq:limsuptail1}, implies that for $\lambda $ large enough, $\Zfn((1+\z)\devm; 2\scl, 0) \geq e^{(1+\z) \lambda} > \frac{1}{\sqrt{4\pi \scl}} e^{\lambda}$. Consequently, for $\lambda$ large enough,  
\begin{equation*}
\inf \Big\{\frac{1}{2\scl}\|\dev\|_{L^2([0, 2\scl] \times \R)}^2: \Zfn(\dev; 2\scl, 0) \geq \frac{1}{\sqrt{4\pi \scl}} e^\scl\Big\} \leq \frac{1}{2\lambda} \|(1+\z) \devm\|_{L^2([0, 2\scl] \times \R)}^2 = \frac{4}{3}(1+\z)^2.
\end{equation*}
In the last equality, we used $\|\sech(\Cdot)^2\|_{L^2(\R)}^2 = \frac{4}{3}$. Taking $\z \to 0$ concludes \eqref{eq:limsuptail}.
\subsection{Proof of \eqref{eq:liminftail}}
\indentation 
Fix $T > 0$ and $\rho \in L^2([0, T] \times \R)$. Let us define the time-dependent semigroup $P(\dev; s\to t): L^2(\R) \to L^2(\R)$, $0 \leq s < t \leq T$. Given a function $f \in L^2(\R)$, fix $s$ and consider the PDE
\begin{equation}\label{eq:pde}
\partial_r\Zfn^{s, f} (r, x) = \frac{1}{2} \partial_{xx} \Zfn^{s, f} (r, x) + \dev(r, x) \Zfn^{s, f} (r, x), \qquad \Zfn^{s, f} (s, \Cdot) = f.
\end{equation} 
We define $P(\dev; s \to t) f := \Zfn^{s, f}(t, \Cdot)$. 
Note that $\Zfn^{s, f}$ is the unique solution to the integral equation
\begin{equation*}
\Zfn^{s, f} (t, x) = \int_\R p(t-s, x-y) f(y) dy + \int_s^t \int_\R p(t-r, x-y) \dev(r, y) \Zfn^{s, f} (r, y) drdy.
\end{equation*}  
Via Picard iteration, we have $\Zfn^{s, f}(t, x) = \int_{\mathbb{\R}} P((s,x) \to (t, y)) f(y) dy$ where the kernel 
\begin{align}
\notag
&P(\dev; (s, x) \to (t, y))\\ 
\label{eq:semigroup}
&:= p(t-s, x-y) + \sum_{n=1}^\infty \int_{s < t_n < \dots < t_1 < t} \int_{\R^n} \prod_{i=1}^{n+1} p(t_{i-1} - t_i, x_{i-1} - x_i) \dev(t_i, x_i) dt_i dx_i. 
\end{align}
Here, we set $t_0 = t$ and $x_0 = x$, $t_{n+1} = s$ and $x_{n+1} = y$.
\bigskip
\\
Next we establish four bounds for the time-dependent semigroup. 
\begin{lem}\label{l.ptL2}
There exists a universal constant $C$ such that for all $s < t$ and $a > 0$, 
\begin{align}
\label{e.l.ptL2}
&\|P(\dev; (s, 0) \to (t, \Cdot))\|_{L^2(\R)} \leq C(t-s)^{-1/4} \exp(a^2 C (t-s) + \frac{1}{a} \|\dev\|^2_{L^2([s, t] \times \R)}),\\
\label{e.l.L2pt}
&\|P(\dev; (s, \Cdot) \to (t, 0)\|_{L^2(\R)} \leq C(t-s)^{-1/4} \exp(a^2 C (t-s) + \frac{1}{a} \|\dev\|^2_{L^2([s, t] \times \R)}),\\
\label{e.l.L2L2}
&\|P(\rho; s \to t)\|_{L^2(\R) \to L^2(\R)} \leq  C\exp(a^2 C (t-s) + \frac{1}{a} \|\dev\|^2_{L^2([s, t] \times \R)}),\\
\label{e.l.ptpt}
&|P(\rho; (s, y) \to (t, x))| \leq C \exp(a^2 C(t-s) + \frac{1}{a} \|\dev\|^2_{L^2([s, t] \times \R)}) p(t-s, x-y).
\end{align}
\end{lem}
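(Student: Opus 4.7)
My plan is to prove the four bounds in a cascading fashion: first the $L^2\to L^2$ estimate \eqref{e.l.L2L2} via a PDE energy argument, then the point-to-$L^2$ and $L^2$-to-point estimates \eqref{e.l.ptL2}, \eqref{e.l.L2pt} by combining \eqref{e.l.L2L2} with the mild form and the $L^1\to L^2$ smoothing of the free heat kernel, and finally the pointwise bound \eqref{e.l.ptpt} via an affine reparametrization of the Brownian-bridge Feynman-Kac representation together with a midpoint splitting. Throughout, the fact that $\rho$ is only in $L^2$ forces us to exploit the $L^p\to L^q$ improvement of the free semigroup $T_{\tau,r}$.

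For \eqref{e.l.L2L2} I would first assume $\rho\in C_c^\infty([s,t]\times\R)$, so that $u(r,\cdot):=P(\rho;s\to r)f$ classically solves $\partial_r u=\tfrac12\partial_{xx}u+\rho u$. The energy identity
$$\tfrac{d}{dr}\|u(r)\|_{L^2}^2=-\|\partial_x u(r)\|_{L^2}^2+2\int_\R\rho(r,x)\,u(r,x)^2\,dx,$$
together with H\"older ($\int\rho u^2\le\|\rho(r)\|_{L^2}\|u\|_{L^4}^2$), the one-dimensional Gagliardo--Nirenberg--Sobolev inequality $\|u\|_{L^4}^2\le C_0\|u\|_{L^2}^{3/2}\|\partial_x u\|_{L^2}^{1/2}$, and Young's inequality to absorb the $\|\partial_x u\|^2$ term, yields $\tfrac{d}{dr}\|u\|^2\le C_1\|\rho(r)\|_{L^2}^{4/3}\|u\|^2$. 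Gr\"onwall's inequality and a further Young's with free parameter $a$, namely $\|\rho(r)\|^{4/3}\le c_1 a^{3/2}\|\rho(r)\|^2+c_2 a^{-3}$, then convert the exponent into the claimed form $Ca^2(t-s)+\tfrac{1}{a}\|\rho\|_{L^2}^2$. For general $\rho\in L^2$, approximate by smooth compactly supported $\rho_n$ and pass to the limit in the series \eqref{eq:semigroup}.

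For \eqref{e.l.ptL2}, set $v(r,\cdot):=P(\rho;(s,0)\to(r,\cdot))$. Taking the $L^2(dy)$-norm of the mild form and using Minkowski, $\|p(r-s,\cdot)\|_{L^2}\le C(r-s)^{-1/4}$, the heat smoothing $\|T_{\tau,r}\|_{L^1\to L^2}\le C(r-\tau)^{-1/4}$, and $\|\rho(\tau)v(\tau)\|_{L^1}\le\|\rho(\tau)\|_{L^2}\|v(\tau)\|_{L^2}$, gives the Volterra-type inequality
$$\|v(r)\|_{L^2}\le C(r-s)^{-1/4}+C\int_s^r(r-\tau)^{-1/4}\|\rho(\tau)\|_{L^2}\|v(\tau)\|_{L^2}\,d\tau.$$
Iterating, summing the resulting series (with $\beta$-function evaluations of the time integrals), and a final Young's inequality yield \eqref{e.l.ptL2}. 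The bound \eqref{e.l.L2pt} follows by the time-reversal symmetry $|P(\rho;(s,x)\to(t,0))|=|P(\tilde\rho;(s,0)\to(t,x))|$ with $\tilde\rho(r,z):=\rho(s+t-r,z)$, which preserves $\|\rho\|_{L^2}$. Finally, for \eqref{e.l.ptpt} I would use the Brownian-bridge Feynman--Kac representation $P(\rho;(s,y)\to(t,x))=p(t-s,x-y)\,\E_{y\to x}[\exp(\int_s^t\rho(r,\bb(r))\,dr)]$ and reparametrize the bridge by subtracting the straight-line interpolation $y+\tfrac{r-s}{t-s}(x-y)$; this translates $\rho$ spatially (preserving its $L^2$ norm) and reduces the claim to the $x=y=0$ case. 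In that case, splitting at the midpoint $r^*:=\tfrac{1}{2}(s+t)$ and applying Cauchy--Schwarz,
$$|P(\rho;(s,0)\to(t,0))|\le\|P(\rho;(s,0)\to(r^*,\cdot))\|_{L^2}\,\|P(\rho;(r^*,\cdot)\to(t,0))\|_{L^2},$$
combined with \eqref{e.l.ptL2} and \eqref{e.l.L2pt}, yields a factor $C(t-s)^{-1/2}\exp(\ldots)$ matching $Cp(t-s,0)\exp(\ldots)$.

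The main obstacle, in my view, is the approximation step in \eqref{e.l.L2L2}: one must verify that both sides of the inequality (and likewise for the other three bounds) are continuous (or at least appropriately semicontinuous) in $\rho$ under the $L^2$ topology, so that the passage from smooth $\rho_n$ to general $\rho\in L^2$ is legitimate. This amounts to a term-by-term continuity check of the series expansion \eqref{eq:semigroup}, which is nontrivial for the pointwise bound and appears to be handled in Lemmas~\ref{lem:continuity} and \ref{lem:Fcont}.
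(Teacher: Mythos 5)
Your proposal takes a genuinely different route from the paper, and most of it works, but the approximation step you flag for \eqref{e.l.L2L2} has a circularity that needs to be repaired.

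The paper proves all four bounds by a single elementary mechanism: read $P(\rho;\cdots)$ off the series expansion \eqref{eq:semigroup}, bound the $n$-th term directly by Cauchy--Schwarz (in four slightly different flavors, e.g.\ the inequality $(\int_\R p(t,x-y)f(y)\,dy)^2\le\int_\R p(t,x-y)f(y)^2\,dy$ for \eqref{e.l.L2L2}), and sum the resulting $\ell^2$ series against a geometric series indexed by the free parameter $a$. This applies to arbitrary $\rho\in L^2([s,t]\times\R)$ with no smooth approximation. Your assembly is quite different: an energy/Gr\"onwall argument for \eqref{e.l.L2L2}, a singular Volterra iteration for \eqref{e.l.ptL2}--\eqref{e.l.L2pt}, and a Feynman--Kac reparametrization plus a midpoint Cauchy--Schwarz split for \eqref{e.l.ptpt}. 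The pieces are all true. Your energy argument for \eqref{e.l.L2L2} is in fact precisely the method the paper uses later, in Proposition~\ref{p.l2tol2}, to get the sharper bound $\|P(\rho;s\to t)\|_{L^2\to L^2}\le\exp(\int_s^t F(\rho(r,\cdot))\,dr)$; your \eqref{e.l.ptpt} argument, via the bridge identity $P(\rho;(s,y)\to(t,x))=\frac{p(t-s,x-y)}{p(t-s,0)}\,P(\tilde\rho;(s,0)\to(t,0))$ with $\tilde\rho$ a slice-wise translate of $\rho$, is essentially the splitting the paper performs a page later in the proof of \eqref{eq:liminftail}. What the paper's direct series estimate buys is that no density/continuity argument is required \emph{at this stage}, whereas your energy proof of \eqref{e.l.L2L2} works only for smooth $\rho$ and so forces you to approximate.

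That is where the gap lies. You cannot invoke Lemma~\ref{lem:continuity} to justify passing from smooth $\rho_n$ to general $\rho$ in \eqref{e.l.L2L2}: the proof of Lemma~\ref{lem:continuity} explicitly \emph{applies} \eqref{e.l.L2L2} to control the factor $\|P(\zeta^{-1}(\rho-(1-\zeta)\tilde\rho);s\to t)\|_{L^2\to L^2}$ produced by its H\"older interpolation, so using it here is circular. (Lemma~\ref{lem:Fcont} is independent, but it concerns $F$, not $P$, and does not supply the needed continuity.) Your fallback --- verify continuity directly via the term-by-term series expansion --- is viable, but once you bound the tail of that series uniformly over the approximating family you have in effect reproduced the paper's direct Cauchy--Schwarz proof of \eqref{e.l.L2L2}, so the energy step becomes a detour rather than a shortcut. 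Two minor points: in the Young step you want $\|\rho(r,\cdot)\|_{L^2(\R)}^{4/3}\le C a^{-1}\|\rho(r,\cdot)\|_{L^2(\R)}^2 + C a^2$ (your stated exponents $a^{3/2}$, $a^{-3}$ are off, though equivalent after reparametrizing $a$); and in \eqref{e.l.ptpt} you should make explicit that the affine change of variables is a translation on each time slice, so $\|\tilde\rho\|_{L^2([s,t]\times\R)}=\|\rho\|_{L^2([s,t]\times\R)}$, which is what lets you apply \eqref{e.l.ptL2}--\eqref{e.l.L2pt} with the desired right-hand side.
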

\begin{proof}
In the proof, we assume without loss of generality that $s = 0$.
\bigskip
\\ 
Let us first prove \eqref{e.l.ptL2}.
View the right hand side of \eqref{eq:semigroup} as a series of function in $x$, and bound the $L^2$ norm of each function. For the zeroth function in \eqref{eq:semigroup}, we have $\|p(t, \Cdot)\|_{L^2(\R)}^2 = C t^{-\frac{1}{2}}$. 
For the $n$-th functions in \eqref{eq:semigroup}, applying the Cauchy-Schwarz inequality gives 
\begin{align*}
\normL{\text{(n-th)}}^2 &\leq \int_\R  \int_{0 < t_n < \dots < t_1  < t} \int_{\R^n} \prod_{i=1}^{n+1} p(t_{i-1}-t_i, x_{i-1} - x_i)^2 dt_i dx_i dx \frac{\|\dev\|^{2n}_{L^2([0, t] \times \R)}}{n!}\\
&= \frac{C^n}{\Gamma(n/2)} t^{\frac{n}{2} - \frac{1}{2}}   \frac{\|\dev\|^{2n}_{L^2([0, t] \times \R)}}{n!}.
\end{align*}
Hence, we have 
\begin{align*}
\normL{P(\dev; (0, 0) \to (t, \Cdot))} &\leq
C t^{-\frac{1}{4}} \sum_{n=0}^\infty \frac{C^n t^{n/4}}{\Gamma(n/2)^{\frac{1}{2}}} \frac{\|\dev\|^n_{L^2([0, t] \times \R)}}{(n!)^{1/2}}\\ 
&\leq C t^{-\frac{1}{4}} \Big(\sum_{n = 0}^\infty \frac{a^n C^n t^{n/2}}{\Gamma(n/2)}\Big)^{\frac{1}{2}} \Big(\sum_{n = 0}^\infty \frac{a^{-n} \|\dev\|^{2n}_{L^2([0, t] \times \R)}}{n!} \Big)^{\frac{1}{2}}.
\end{align*}
This concludes \eqref{e.l.ptL2}.
\bigskip
\\
The left hand sides of \eqref{e.l.ptL2} and \eqref{e.l.L2pt} are the same upon time reversal, so \eqref{e.l.L2pt} follows.
\bigskip
\\
The proof of \eqref{e.l.L2L2} is similar in  spirit to \eqref{e.l.ptL2}. Using $(P(\dev; 0 \to t) f)(x) = \int_\R P((0, x) \to (t, y)) f(y) dy$ and \eqref{eq:semigroup}, we  express $P(\dev; 0 \to t) f$ as a series of functions. We bound the $L^2$ norm of each function in the series. 
By the Cauchy-Schwarz inequality, we have 
\begin{equation}\label{eq:basicineq}
\Big(\int_\R p(t, x-y) f(y) d y\Big)^2 \leq \int_\R p(t, x-y) f(y)^2 dy. 
\end{equation}
Integrating both sides of \eqref{eq:basicineq} in $x$ gives that for the zero-th function, $\normL{\int_\R p(t, \Cdot-y) f(y)dy}^2 \leq \normL{f}^2$.
For the $n$-th function, applying the Cauchy-Schwarz inequality yields
\begin{align*}
\normL{\text{(n-th)}}^2 \leq \int_\R \int_{0 < t_n < \dots < t_1 < t} \int_{\R^n} \Big(\int p(t_n, x_n - y) f(y) dy\Big)^2\prod_{i=1}^n p(t_{i-1} - t_i, x_{i-1} - x_i)^2 dt_i dx_i dx \frac{\|\dev\|_{L^2([0, t] \times \R)}^{2n}}{n!}.
\end{align*}
Using \eqref{eq:basicineq} and $p(t, x)^2 \leq \frac{1}{\sqrt{2\pi} t} p(t, x)$, we get
\begin{equation*}
\normL{\text{(n-th)}}^2 \leq \frac{\|\rho\|^{2n}_{L^2([0, t] \times \R)} \normL{f}}{n!} \int_{0 < t_n < \dots < t_1 < t} \prod_{i=1}^n \frac{1}{\sqrt{2\pi(t_{i-1} - t_i)}} dt_i = \frac{C^n t^{n/2}}{\Gamma(n/2)} \frac{\|\rho\|^{2n}_{L^2([0, t] \times \R)}}{n!} \|f\|^2_{L^2(\R)}.
\end{equation*}
Taking the square root of both sides and summing over $n$, the rest of the proof is  similar to the proof of \eqref{e.l.ptL2}.
\bigskip
\\
To prove \eqref{e.l.ptpt}, we view the right hand side of \eqref{eq:semigroup} as a series of real numbers. We bound the value of each term in the series. The zeroth term equals $p(t-s, x-y)$. For the $n$-th term, applying the Cauchy-Schwarz inequality yields
\begin{align*}
|(\text{n-th})|^2 \leq \int_{0 < t_n < \dots < s_1 < t} \int_{\R^n} \prod_{i=1}^{n+1} p(t_{i-1} - t_i, x_{i-1} - x_i)^2  dt_i dx_i \frac{\|\dev\|_{L([s, t] \times \R)}^{2n}}{n!} \leq \frac{C^n t^{n/2}}{\Gamma(n/2)} \frac{\|\dev\|^{2n}_{L^2([0, t] \times \R)}}{n!} p(t, x-y)^2.
\end{align*}
Taking the square root of both sides and summing over $n$, the rest of the proof is  similar to the proof of \eqref{e.l.ptL2}.

\end{proof}


\begin{lem}\label{lem:potbd}
	For every $\pot \in L^2(\R)$, we have 
	\begin{equation}\label{eq:funcFbd}
	F(\pot) \leq \frac{1}{2}(\frac{3}{4})^{2/3} \normL{\pot}^{\frac{4}{3}}.
	\end{equation}
Moreover, 
the equality holds if and only if $\pot(\Cdot) = \alpha^2 \sech^2(\alpha(\Cdot - v))$ for some $\alpha \geq 0$.
\end{lem}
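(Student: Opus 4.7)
The plan is to reduce Lemma~\ref{lem:potbd} to an elementary one-variable optimization by combining two classical ingredients with the right homogeneity: Cauchy--Schwarz to bound $\int_\R \varphi g^2\,dx$ by $\|\varphi\|_{L^2(\R)}\|g\|_{L^4(\R)}^2$, and the sharp one-dimensional Gagliardo--Nirenberg--Sobolev (GNS) inequality to bound $\|g\|_{L^4(\R)}^2$ by a geometric mean of $\|g'\|_{L^2(\R)}$ and $\|g\|_{L^2(\R)}$. Maximizing the resulting expression in $u:=\|g'\|_{L^2(\R)}$ gives the upper bound, and tracking the equality cases of each step characterizes the extremizers.

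\textbf{Upper bound.} For $g\in H^1(\R)$ with $\|g\|_{L^2(\R)}=1$, Cauchy--Schwarz gives $\int_\R \varphi g^2\,dx \le \|\varphi\|_{L^2(\R)}\|g\|_{L^4(\R)}^2$. The sharp 1D GNS inequality, classical and with extremals exactly the translations/dilations of $\sech$, states $\|g\|_{L^4(\R)}^4 \le \tfrac{1}{\sqrt{3}}\|g'\|_{L^2(\R)}\|g\|_{L^2(\R)}^3$. Combining these with the normalization yields
\begin{equation*}
\int_\R \varphi g^2 - \tfrac{1}{2}(g')^2\,dx \;\le\; \tfrac{1}{3^{1/4}}\|\varphi\|_{L^2(\R)}\,u^{1/2} - \tfrac{1}{2}u^2.
\end{equation*}
Elementary calculus maximizes the right-hand side at $u_\star = \bigl(\|\varphi\|_{L^2(\R)}/(2\cdot 3^{1/4})\bigr)^{2/3}$, producing the value $\tfrac{1}{2}(3/4)^{2/3}\|\varphi\|_{L^2(\R)}^{4/3}$. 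Taking the supremum over $g$ establishes \eqref{eq:funcFbd}.

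\textbf{Equality characterization.} The case $\alpha=0$, i.e.\ $\varphi\equiv 0$, trivially saturates. Otherwise, suppose $g$ attains the supremum in $F(\varphi)$. Since $\|(|g|)'\|_{L^2(\R)}\le\|g'\|_{L^2(\R)}$ and $g^2=|g|^2$, we may assume $g\ge 0$. Equality in the Cauchy--Schwarz step then forces $\varphi = c g^2$ a.e.\ for some $c\ge 0$; equality in the sharp GNS step forces $g(x)=\beta\sech(\alpha(x-v))$ for some $\alpha,\beta>0$ and $v\in\R$; the normalization $\|g\|_{L^2(\R)}=1$ then fixes $\beta^2=\alpha/2$, so $\varphi(x) = (c\alpha/2)\sech^2(\alpha(x-v))$. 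Finally, requiring $\|g'\|_{L^2(\R)}$ to equal the optimal $u_\star$ from the calculus step (using the elementary identities $\|g'\|_{L^2(\R)}^2=\alpha^2/3$ and $\|\varphi\|_{L^2(\R)}^2 = (c\alpha/2)^2\cdot\tfrac{4}{3\alpha}$) pins down $c\alpha/2=\alpha^2$, so $\varphi(x)=\alpha^2\sech^2(\alpha(x-v))$. Conversely, this $\varphi$ paired with $g(\cdot)=\sqrt{\alpha/2}\,\sech(\alpha(\cdot-v))$ achieves equality, as verified by a direct calculation using $\int_\R \sech^2(x)\,dx=2$ and $\int_\R \sech^4(x)\,dx=4/3$ (parallel to the computation already carried out below \eqref{eq:limsuptail1}).

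\textbf{Main obstacle.} The only non-elementary ingredient is the sharp constant $1/\sqrt{3}$ and the $\sech$ extremal class in the 1D GNS inequality; both are classical and can be cited directly. Once invoked, the rest is a routine chain of inequalities and a one-variable calculus exercise.
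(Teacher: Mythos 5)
Your proof is correct and follows essentially the same path as the paper's: Cauchy--Schwarz to bring in $\|\varphi\|_{L^2}\|g\|_{L^4}^2$, the sharp one-dimensional $L^4$ Gagliardo--Nirenberg--Sobolev inequality, a one-variable optimization to land on the constant $\tfrac{1}{2}(3/4)^{2/3}$, and tracking each equality case (the paper cites the same $\sech$ extremal characterization). The only cosmetic differences are that you explicitly verify the "if" direction by a direct computation and note the reduction to $g\ge 0$; these do not change the substance of the argument.
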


\begin{proof}
Referring to \eqref{eq:funcF}, we have
\begin{align}\label{eq:potbd1}
F(\pot) &\leq \sup\Big\{ \normL{\pot} \Big(\int g(x)^4 dx\Big)^\frac{1}{2} - \frac{1}{2}\int g'(x)^2 dx\Big\} : g\in H^1(\R), \normL{g} = 1 \Big\}\\
\label{eq:potbd2}
&\leq \sup\Big\{3^{-\frac{1}{4}} \normL{\pot}  \normL{g'}^{\frac{1}{2}} - \frac{1}{2} \normL{g'}^2: g \in H^1(\R), \normL{g} = 1\Big\}
\\
\label{eq:potbd3}
&\leq \frac{1}{2}(\frac{3}{4})^{2/3} \normL{\pot}^{\frac{4}{3}}. 
\end{align}
The inequality \eqref{eq:potbd1} is due to the Cauchy-Schwarz inequality. The inequality \eqref{eq:potbd2} follows from the one-dimensional $L^4$ Gagliardo-Nirenberg-Sobolev inequality, which states that, for $g \in L^2(\R)$ and $g' \in L^2(\R)$, 
\begin{equation}\label{eq:potbd4}
\|g\|_{L^4(\R)} \leq 3^{-1/8} \normL{g'}^{1/4} \normL{g}^{3/4}.
\end{equation} 
The inequality \eqref{eq:potbd3} follows from the elementary inequality $3^{-\frac{1}{4}} y x^{\frac{1}{2}} - \frac{1}{2} x^2 \leq \frac{1}{2}(\frac{3}{4})^{2/3} y^{\frac{4}{3}}$.
\bigskip
\\
To prove the if and only if statement, we need to investigate when the inequalities \eqref{eq:potbd1}-\eqref{eq:potbd3} become equalities. 
The inequality \eqref{eq:potbd1} becomes an equality if and only if $\pot = \text{const}\cdot g$. By \cite[Proposition 3.1]{dolbeault2014one}, the inequality \eqref{eq:potbd2}, i.e. \eqref{eq:potbd4} becomes an equality if and only if $g(x) = a\sech(b(x-v))$ for some $a, v$ and $b > 0$; the inequality \eqref{eq:potbd3} becomes an equality if and only if $\normL{g'} = (\frac{1}{2})^{2/3} 3^{-1/6} \normL{\pot}^{2/3}$. Combining these with $\normL{g} = 1$  gives that 
\begin{equation*}
g(\Cdot) = a\sech\big(2a^2(\Cdot - v)\big), \qquad \pot(\Cdot) = 4a^4 \sech^2\big(2a^2(\Cdot - v)\big).
\end{equation*} 
Replacing $2a^2$ with $\alpha$ implies that $\pot(\Cdot) = \alpha^2 \sech^2(\alpha(\Cdot - v))$.
\end{proof}
\begin{rmk}
We only need \eqref{eq:funcFbd} for this section, and the if and only if part is required in Section \ref{sec:minimizer}. 
\end{rmk}
\indentation The key for proving \eqref{eq:liminftail} is an improved upper bound (compared with \eqref{e.l.L2L2}) for $\|P(\rho; s \to t)\|_{L^2(\R) \to L^2(\R)}$ stated in Proposition \ref{p.l2tol2}. To prove this upper bound, we need to first establish the following two lemmas about continuity. 
\begin{lem}\label{lem:continuity}
Fix $0 \leq s < t$. The map $\|P(\Cdot; s \to t)\|_{L^2(\R) \to L^2(\R)}: L^2([s, t] \times \R) \to \R$ is continuous.
\end{lem}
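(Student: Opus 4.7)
The plan is to establish the stronger quantitative Lipschitz estimate
\begin{equation*}
\|P(\dev; s\to t) - P(\tdev; s\to t)\|_{L^2(\R)\to L^2(\R)} \leq C\bigl(\|\dev\|_{L^2([s,t]\times\R)},\, \|\tdev\|_{L^2([s,t]\times\R)},\, t-s\bigr)\, \|\dev-\tdev\|_{L^2([s,t]\times\R)},
\end{equation*}
with $C(\cdot,\cdot,\cdot)$ continuous in its arguments. The statement in the lemma follows from this via the reverse triangle inequality $\bigl|\|A\|-\|B\|\bigr|\leq\|A-B\|$ for operator norms.

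To prove the Lipschitz estimate I would use a Duhamel representation. For $f\in L^2(\R)$, set $u(r,\Cdot):=P(\dev;s\to r)f$, $\tilde u(r,\Cdot):=P(\tdev;s\to r)f$, and $d:=u-\tilde u$. Subtracting the mild equations for $u$ and $\tilde u$ shows that $d$ solves
\begin{equation*}
d(r,x)=\int_s^r\!\!\int_\R p(r-r',x-y)\,\tdev(r',y)\,d(r',y)\,dr'dy + \int_s^r\!\!\int_\R p(r-r',x-y)\,(\dev-\tdev)(r',y)\,u(r',y)\,dr'dy,
\end{equation*}
whose unique solution admits the Duhamel representation
\begin{equation*}
d(t,\Cdot)=\int_s^t P(\tdev;r'\to t)\bigl[(\dev-\tdev)(r',\Cdot)\,u(r',\Cdot)\bigr]\,dr'.
\end{equation*}
Taking $L^2(\R)$ norms and using \eqref{e.l.L2L2} to bound $\|P(\tdev;r'\to t)\|_{L^2(\R)\to L^2(\R)}$ uniformly in $r'\in[s,t]$ by a constant depending only on $\|\tdev\|_{L^2([s,t]\times\R)}$ and $t-s$, matters reduce to controlling $\int_s^t\|(\dev-\tdev)(r',\Cdot)\,u(r',\Cdot)\|_{L^2(\R)}\,dr'$. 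I would estimate the integrand by $\|(\dev-\tdev)(r',\Cdot)\|_{L^2(\R)}\|u(r',\Cdot)\|_{L^\infty(\R)}$ and control the $L^\infty$ factor via the pointwise kernel estimate \eqref{e.l.ptpt}: since $|P(\dev;(s,y)\to(r',x))|\leq Ce^{C(t-s)+C\|\dev\|^2}p(r'-s,x-y)$, Cauchy--Schwarz in $y$ together with $\|p(r'-s,\Cdot)\|_{L^2(\R)}=C(r'-s)^{-1/4}$ yields
\begin{equation*}
\|u(r',\Cdot)\|_{L^\infty(\R)}\leq C\bigl(\|\dev\|_{L^2([s,t]\times\R)},\,t-s\bigr)(r'-s)^{-1/4}\|f\|_{L^2(\R)}.
\end{equation*}
A final Cauchy--Schwarz in $r'\in[s,t]$, using $\int_s^t(r'-s)^{-1/2}dr'=2(t-s)^{1/2}<\infty$, then produces the desired linear dependence on $\|\dev-\tdev\|_{L^2([s,t]\times\R)}$.

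The principal obstacle is the $L^\infty$ bound on $u(r',\Cdot)$. Since $\dev$ is only $L^2$ in spacetime, multiplication by $\dev(r',\Cdot)$ is not a bounded operator on $L^2(\R)$, so the usual operator-level Duhamel calculus does not close directly. The pointwise kernel bound \eqref{e.l.ptpt} is the key input that bypasses this issue by transferring the regularity onto the underlying heat kernel; the resulting short-time singularity $(r'-s)^{-1/4}$ is square-integrable near $r'=s$, so the Cauchy--Schwarz in $r'$ still yields a finite constant and the estimate closes.
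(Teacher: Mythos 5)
Your proposal is correct, but it takes a genuinely different route from the paper. The paper's proof is a scalar H\"older-interpolation argument: writing $\dev = (\dev - (1-\z)\tdev) + (1-\z)\tdev$ in the Feynman--Kac exponent and applying H\"older twice yields the log-convexity estimate
\begin{equation*}
\|P(\dev; s\to t)\|_{L^2\to L^2} \leq \|P(\z^{-1}(\dev-(1-\z)\tdev); s\to t)\|_{L^2\to L^2}^{\z}\,\|P(\tdev; s\to t)\|_{L^2\to L^2}^{1-\z},
\end{equation*}
from which upper and lower semi-continuity of $\log\|P(\Cdot; s\to t)\|$ follow by taking logarithms, invoking \eqref{e.l.L2L2}, and optimizing $\z = \|\dev - \tdev\|_{L^2([s,t]\times\R)}$. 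Your Duhamel approach instead targets the operator difference directly: it is more quantitative (it gives local Lipschitz continuity, which the paper does not claim and does not need) but requires more machinery. In particular you need the uniqueness of the mild solution for the perturbed equation and the validity of the Duhamel representation for merely $L^2$ spacetime potentials, both of which are true here but do call for a line of justification (Picard iteration for $d$ converges by the same $\Gamma$-function estimates used in Lemma \ref{l.ptL2}, and Fubini then gives the Duhamel integral). You have also correctly identified and resolved the central technical obstacle: multiplication by $(\dev - \tdev)(r',\Cdot)$ is not bounded on $L^2(\R)$, so the standard operator-level Duhamel calculus does not close; your use of the kernel bound \eqref{e.l.ptpt} to place $u(r',\Cdot)$ in $L^\infty$ with a square-integrable short-time singularity $(r'-s)^{-1/4}$ is exactly what is needed. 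In short, the paper trades the quantitative Lipschitz conclusion for a proof whose only black box is \eqref{e.l.L2L2}; your route is heavier but yields a sharper statement.
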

\begin{proof}
	
	Fix $f \in L^2(\R)$. Since $\Zfn^{s, f}(t, x) = (P(\dev; s \to t) f)(x)$ solves \eqref{eq:pde}, by the Feynman-Kac formula, 
	\begin{equation*}
	\Zfn^{s, f}(\dev; t, x) = \E_{x}\bigg[\exp\Big(\int_s^t \dev(t-r, B(r)) dr\Big) f(B(t))\bigg].
	\end{equation*}
	Given $\dev, \tdev \in L^2([s, t] \times \R)$. We write $\dev = \dev - (1-\z)\tdev + (1-\z) \tdev$ in the exponent above and apply the H\"{o}lder inequality 
	to the result. We have for $\z \in (0, 1)$,
	\begin{align*}
	\Zfn^{s, f} (\dev; t, x) &= \E_{x}\bigg[\exp\Big(\int_s^t (\dev - (1-\z) \tdev)(t-r, B(r)) dr\Big) \big(f(B(t))\big)^{\z}  \\
	&\hspace{5.2em} \cdot \exp\Big(\int_s^t (1-\z) \tdev(t-r, B(r)) dr\Big) \big(f(B(t))\big)^{1-\z}\bigg]\\
	&\leq \Zfn^{s, f}\big(\z^{-1}(\dev - (1-\z) \tdev); t, x\big)^{\z} \Zfn^{s, f}(\tdev; t, x)^{1-\z}.
	\end{align*}
Squaring both sides, integrating in $x$ and using the H\"{o}lder inequality again gives
	\begin{equation*}
	\big\|\Zfn^{s, f}(\rho; t, \Cdot)\big\|_{L^2(\R)} \leq \big\|\Zfn^{s, f}\big(\z^{-1}(\dev - (1-\z) \tdev); t, \Cdot\big)\big\|_{L^2(\R)}^\z \big\|\Zfn^{s, f}(\tdev; t, \Cdot)\big\|_{L^2(\R)}^{1-\z}.
	\end{equation*}
	Taking the supremum over $\{f: \|f\|_{L^2(\R)} \leq 1\}$ yields that
	\begin{equation}\label{e.l.continuity1}
	\big\|P(\dev; s \to t)\big\|_{L^2(\R) \to L^2(\R)} \leq \big\|P\big(\z^{-1}(\dev - (1-\z) \tdev); s \to t\big)\big\|_{L^2(\R) \to L^2(\R)}^\z \big\|P(\tdev; s \to t)\big\|_{L^2(\R) \to L^2(\R)}^{1-\z}.
	\end{equation}
Set $G(\dev):= \log \|P(\dev; s \to t)\|_{L^2(\R) \to L^2(\R)}$. Our goal is to show that $G$ is continuous in $\dev$. We do this by showing that 
$\GG$ is both upper and lower semi-continuous. Taking the logarithm of both sides of \eqref{e.l.continuity1} and applying \eqref{e.l.L2L2} yields 
	\begin{align}
	\notag
	\GG(\dev) &\leq \z \GG\big(\z^{-1}(\dev - (1-\z) \tdev)\big) + (1-\z) \GG(\tdev)\\
	\label{e.l.continuity2}
	&\leq C(s, t) \z  + 
	C \z \|\z^{-1}(\dev - (1-\z) \tdev)\|_{L^2([s, t] \times \R)}^2
	+ (1-\z)G(\tdev).
	\end{align}
	Subtracting $(1-\z) \GG(\tdev) + \z \GG(\dev)$ from both sides and applying the inequality $\|f + g\|_{L^2}^2 \leq 2(\|f\|^2_{L^2} + \|g\|_{L^2}^2)$ yield
	\begin{align*}
	(1-\z) (\GG(\dev) - \GG(\tdev)) &\leq  C(s, t)  \z + C \z^{-1} (1-\z)^2 \|\dev - \tdev\|_{L^2([s, t] \times \R)}^2 + C\z \|\dev\|_{L^2([s, t] \times \R)}^2 - \z \GG(\dev)\\
	&\leq C\z^{-1} (1-\z)^2 \|\dev - \tdev\|_{L^2([s, t] \times \R)}^2 + \z C(s, t, \dev).
	\end{align*}
	Taking $\z = \|\dev - \tdev\|_{L^2([s, t] \times \R)}$ and letting $\tdev \to \dev$, we see that $\limsup_{\tdev \to \dev} \GG(\tdev) \leq G(\dev)$. Thus, $G$ is upper semi-continuous.
	\bigskip
	\\
	On the other hand, if we swap $\dev$ and $\tdev$ in \eqref{e.l.continuity2}, a similar argument gives 
\begin{equation*}
G(\tdev) - G(\dev)
\leq \z C(s, t, \dev) + C \z^{-1} \|\tdev - \dev\|_{L^2([s, t] \times \R)}^2.
\end{equation*}
	Taking $\z = \|\dev - \tdev\|_{L^2([s, t] \times \R)}$ and letting $\tdev \to \dev$, we see that $\GG$ is lower semi-continuous. This concludes the lemma.
\end{proof}
\begin{lem}\label{lem:Fcont}
Fix $0 \leq s<  t$ and $M \geq 0$. There exists a constant $C(M) > 0$. For $\varphi_1, \varphi_2 \in L^2(\R)$ such that $0 \leq \varphi_1, \varphi_2  \leq M$ almost everywhere, we have 
	\begin{equation*}
	|F(\varphi_1) - F(\varphi_2)| \leq C(M) \|\varphi_1 - \varphi_2\|_{L^2(\R)}.
	\end{equation*}
\end{lem}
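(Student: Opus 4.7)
My approach is to use the variational formula \eqref{eq:funcF} directly, comparing a near-maximizer for $F(\varphi_1)$ against $\varphi_2$ as a competitor, and then swapping the roles of $\varphi_1$ and $\varphi_2$. The crucial observation is that the hypothesis $0 \leq \varphi \leq M$ forces near-maximizers of $F(\varphi)$ into a uniformly bounded subset of $H^1(\R)$, which in turn controls their $L^4$ norm via \eqref{eq:potbd4}.

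First I would record the trivial two-sided bound $0 \leq F(\varphi) \leq M$ whenever $0 \leq \varphi \leq M$. The upper bound is immediate from \eqref{eq:funcF} since $\int_\R \varphi g^2 \, dx \leq M \normL{g}^2 = M$. For the lower bound, $\int_\R \varphi g^2 \, dx \geq 0$, while $\normL{g'}^2$ can be made arbitrarily small by taking, for instance, $g_n(x) = n^{-1/2} g_0(x/n)$ with a fixed bump $g_0 \in \Cc(\R)$ of unit $L^2(\R)$ norm. Now fix $\z > 0$ and pick $g_\z \in H^1(\R)$ with $\normL{g_\z} = 1$ satisfying
\begin{equation*}
\int_\R \varphi(x) g_\z(x)^2 \, dx - \tfrac{1}{2} \int_\R g_\z'(x)^2 \, dx \;\geq\; F(\varphi) - \z.
\end{equation*}
Combined with the two-sided bound, this yields $\tfrac{1}{2} \normL{g_\z'}^2 \leq M - F(\varphi) + \z \leq M + \z$.

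The next step is to bound $\|g_\z^2\|_{L^2(\R)} = \|g_\z\|_{L^4(\R)}^2$ in terms of $M$. Applying the Gagliardo--Nirenberg--Sobolev inequality \eqref{eq:potbd4} together with $\normL{g_\z} = 1$ gives
\begin{equation*}
\|g_\z^2\|_{L^2(\R)} \;\leq\; 3^{-1/4} \normL{g_\z'}^{1/2} \;\leq\; 3^{-1/4} (2M + 2\z)^{1/4}.
\end{equation*}
Taking $g_\z$ to be a near-maximizer for $F(\varphi_1)$ and inserting $\varphi_2$ as a competitor, I would then compute
\begin{align*}
F(\varphi_1) - \z \;&\leq\; \int_\R \varphi_1 g_\z^2 - \tfrac{1}{2}(g_\z')^2 \, dx \\
\;&=\; \Big(\int_\R \varphi_2 g_\z^2 - \tfrac{1}{2}(g_\z')^2 \, dx\Big) + \int_\R (\varphi_1 - \varphi_2) g_\z^2 \, dx \\
\;&\leq\; F(\varphi_2) + \|\varphi_1 - \varphi_2\|_{L^2(\R)} \, \|g_\z^2\|_{L^2(\R)} \\
\;&\leq\; F(\varphi_2) + 3^{-1/4} (2M + 2\z)^{1/4} \, \|\varphi_1 - \varphi_2\|_{L^2(\R)},
\end{align*}
where Cauchy--Schwarz is used in the penultimate line. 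Sending $\z \to 0$ and then swapping the roles of $\varphi_1$ and $\varphi_2$ yields the claimed Lipschitz bound with $C(M) = 3^{-1/4}(2M)^{1/4}$.

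The only subtle ingredient is the uniform $H^1(\R)$-bound on the near-maximizers $g_\z$, and both halves of the hypothesis $0 \leq \varphi \leq M$ enter here: the upper bound gives $F(\varphi) \leq M$, while the lower bound (combined with $\varphi \geq 0$) gives $F(\varphi) \geq 0$, so that the energy-type inequality $\tfrac{1}{2}\normL{g_\z'}^2 \leq M - F(\varphi) + \z$ closes. The rest is a direct invocation of \eqref{eq:potbd4} and Cauchy--Schwarz, so I do not anticipate any further obstacle.
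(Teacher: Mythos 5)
Your proof is correct and follows essentially the same route as the paper's: both pick a near-maximizer $g_\z$ from the variational formula \eqref{eq:funcF}, bound $\|g_\z'\|_{L^2(\R)}$ via the energy-type inequality (using $0\le\varphi\le M$ to ensure the ground-state energy is sandwiched between $0$ and $M$), and then apply Cauchy--Schwarz together with the Gagliardo--Nirenberg--Sobolev inequality \eqref{eq:potbd4} to control $\|g_\z^2\|_{L^2(\R)}$. The only cosmetic difference is bookkeeping: you carry out the $H^1$ bound and the GNS step before assembling the inequality chain, whereas the paper interleaves them, and you are slightly more explicit about where the lower bound $F(\varphi)\ge 0$ from $\varphi\ge 0$ is used.
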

\begin{proof}
	By the definition of $F(\varphi_2)$ in \eqref{eq:funcF}, for any $\z > 0$, there exists $g_\z$ that $\|g_\z\|_{L^2(\R)}^2  =1$ and 
	\begin{equation}\label{eq:Fcont1}
	F(\varphi_2) \leq \int_\R \varphi_2 g_\z^2  - \frac{1}{2} (g'_\z)^2 + \z. 
	\end{equation}
	Moreover, we have $F(\varphi_1) \geq \int_\R \varphi_1 g_\z^2 - \frac{1}{2} (g'_\z)^2$. Using this and \eqref{eq:Fcont1}, we have 
\begin{align*}
F(\varphi_2)  - F(\varphi_1) \leq \int_\R (\varphi_2 - \varphi_1) g_\z^2 + \z. 	
\end{align*}
Applying the Cauchy-Schwarz inequality to the right hand side above and then applying \eqref{eq:potbd4} to the result, we have
\begin{equation}\label{eq:Fcont2}
F(\varphi_2)  - F(\varphi_1) \leq 3^{-\frac{1}{4}} \|\varphi_2 - \varphi_1\|_{L^2(\R)} \|g_\z'\|_{L^2(\R)}^{\frac{1}{2}} + \z.
\end{equation}
By \eqref{eq:Fcont1}, $\frac{1}{2}\|g'_\z\|_{L^2(\R)}^2 \leq \int_\R \varphi_2 g_\z^2  - F(\varphi_2) + \z.$ 
Since $0 \leq \varphi_2 \leq M$ and $\|g_\z\|_{L^2(\R)} = 1$, we can upper bound the first term on the right hand side by $M$ and the second term by a universal constant $C$. Hence, we have 
$\frac{1}{2}\|g'_\z\|_{L^2(\R)}^2 
\leq  C(M).$ Inserting this to the right hand side of \eqref{eq:Fcont2} and letting $\z \to 0$ yields   
	\begin{equation*}
	F(\varphi_2) - F(\varphi_1) \leq  C(M) \|\varphi_2 - \varphi_1\|_{L^2(\R)}.
	\end{equation*}
	Swapping $\varphi_1$ and $\varphi_2$ concludes the lemma. 
\end{proof}
\begin{cor}\label{cor:Fcont}
	Fix $0 \leq s < t$ and $M \geq 0$. There exists a constant $C(M, t, s)$. For $\rho_1, \rho_2 \in L^2([s, t] \times \R)$ such that $0 \leq \rho_1, \rho_2 \leq M$ almost everywhere, we have 
	\begin{equation*}
	\Big|\int_s^t F(\rho_1(r, \Cdot)) dr - \int_s^t F(\rho_2 (r, \Cdot)) dr\Big| \leq C(M, t, s) \|\rho_1 - \rho_2\|_{L^2([s, t] \times \R)}.
	\end{equation*}   
\end{cor}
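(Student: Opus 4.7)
The plan is to reduce this to the single-slice Lipschitz estimate in Lemma~\ref{lem:Fcont} by applying it pointwise in the time variable $r$, then converting an $L^1([s,t])$ bound into an $L^2([s,t]\times\R)$ bound via Cauchy--Schwarz.

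More concretely, I would proceed as follows. First, by Fubini, the hypotheses $\rho_1,\rho_2\in L^2([s,t]\times\R)$ and $0\leq\rho_1,\rho_2\leq M$ a.e. imply that for a.e.\ $r\in[s,t]$ the slice $\rho_i(r,\Cdot)$ lies in $L^2(\R)$ and satisfies $0\leq\rho_i(r,\Cdot)\leq M$ a.e.\ on $\R$. Thus Lemma~\ref{lem:Fcont} applies slice-by-slice:
\begin{equation*}
\bigl|F(\rho_1(r,\Cdot))-F(\rho_2(r,\Cdot))\bigr|\leq C(M)\,\bigl\|\rho_1(r,\Cdot)-\rho_2(r,\Cdot)\bigr\|_{L^2(\R)}\quad\text{for a.e. }r\in[s,t].
\end{equation*}
(Measurability of $r\mapsto F(\rho_i(r,\Cdot))$ follows because $r\mapsto\rho_i(r,\Cdot)$ is measurable as an $L^2(\R)$-valued map and $F$ is continuous on the relevant $L^2$ set.)

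Next, integrate in $r$ and move the absolute value inside:
\begin{equation*}
\Bigl|\int_s^t F(\rho_1(r,\Cdot))\,dr-\int_s^t F(\rho_2(r,\Cdot))\,dr\Bigr|\leq \int_s^t\bigl|F(\rho_1(r,\Cdot))-F(\rho_2(r,\Cdot))\bigr|\,dr\leq C(M)\int_s^t\bigl\|\rho_1(r,\Cdot)-\rho_2(r,\Cdot)\bigr\|_{L^2(\R)}\,dr.
\end{equation*}
Finally, Cauchy--Schwarz in $r$ bounds the right-hand side by
\begin{equation*}
C(M)\,(t-s)^{1/2}\Bigl(\int_s^t\bigl\|\rho_1(r,\Cdot)-\rho_2(r,\Cdot)\bigr\|_{L^2(\R)}^2\,dr\Bigr)^{1/2}=C(M)\,(t-s)^{1/2}\,\|\rho_1-\rho_2\|_{L^2([s,t]\times\R)},
\end{equation*}
which is the desired inequality with $C(M,t,s):=C(M)(t-s)^{1/2}$. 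There is no real obstacle here; the only thing one should be mildly careful about is the measurability/integrability sanity check before integrating the slicewise estimate, and that is immediate from Fubini combined with the continuity of $F$ granted by Lemma~\ref{lem:Fcont}.
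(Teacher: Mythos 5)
Your proof is correct and follows essentially the same route as the paper's: apply Lemma~\ref{lem:Fcont} slicewise in $r$, bring the absolute value inside the integral, and finish with Cauchy--Schwarz in the time variable. The only addition is the (reasonable) measurability sanity check, which the paper leaves implicit.
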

\begin{proof}
It is straightforward to see that
	\begin{align}\label{eq:Fcont3}
	\Big|\int_s^t F(\rho_1(r, \Cdot)) dr - \int_s^t F(\rho_2 (r, \Cdot)) dr\Big| \leq \int_s^t \big| F(\rho_1 (r, \Cdot))  - F(\rho_2(r, \Cdot))\big| dr.
\end{align}
Applying Lemma \ref{lem:Fcont} to upper bound the right hand side above and applying the Cauchy Schwarz inequality to the result, we have
\begin{equation*}
\text{RHS of \eqref{eq:Fcont3}} \leq C(M) \int_s^t \|\rho_1(r, \Cdot) - \rho_2 (r, \Cdot)\|_{L^2(\R)} dr \leq C(M) (t-s)^{\frac{1}{2}} \|\rho_1 - \rho_2\|_{L^2([s, t] \times \R)}.
\end{equation*}
This concludes the corollary.
\end{proof}

\begin{prop}\label{p.l2tol2}
Fix $0 \leq s < t$. For all $\rho \in L^2([s, t] \times \R)$ satisfying $\rho \geq 0$ almost everywhere, we have
\begin{equation}\label{e.p.l2tol2}
\|P(\dev; s \to t)\|_{L^2(\R) \to L^2(\R)} \leq \exp\Big(\int_s^t F(\dev(r, \Cdot)) dr\Big). 
\end{equation}
\end{prop}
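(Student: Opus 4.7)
The plan is to establish \eqref{e.p.l2tol2} first for smooth compactly supported $\rho$ via an energy estimate, then extend by a two-tier approximation using Lemma~\ref{lem:continuity} and Corollary~\ref{cor:Fcont}, which are set up for precisely this purpose.

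\emph{Step 1 (smooth case, energy estimate).}
Take $\rho \in C_c^\infty([s,t]\times\R)$ and $f \in C_c^\infty(\R)$. Then $u(r,\Cdot) := P(\rho; s \to r)f$ is a classical solution to $\partial_r u = \tfrac12 \partial_{xx} u + \rho u$ with $u(r,\Cdot) \in H^1(\R)$ for every $r \in [s,t]$. Differentiating the $L^2$ norm along the flow and integrating by parts gives
$$
\tfrac{d}{dr}\|u(r,\Cdot)\|_{L^2(\R)}^2 \;=\; 2\int_\R \rho(r,x)\, u(r,x)^2 \, dx \;-\; \|\partial_x u(r,\Cdot)\|_{L^2(\R)}^2.
$$
The right-hand side equals $2\bigl(\int \rho u^2 - \tfrac12 \int (\partial_x u)^2\bigr)$, which by the variational definition \eqref{eq:funcF} of $F$ is at most $2 F(\rho(r,\Cdot))\|u(r,\Cdot)\|_{L^2(\R)}^2$. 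Gr\"onwall's inequality then yields $\|u(t,\Cdot)\|_{L^2(\R)}^2 \leq \|f\|_{L^2(\R)}^2 \exp\!\bigl(2\int_s^t F(\rho(r,\Cdot))\, dr\bigr)$, and taking the supremum over $\|f\|_{L^2(\R)} \leq 1$ (via density of $C_c^\infty$ in $L^2$) gives \eqref{e.p.l2tol2} in the smooth case.

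\emph{Step 2 (bounded case).} For $\rho \in L^2([s,t]\times\R)$ with $0 \leq \rho \leq M$ almost everywhere, approximate by $\rho_n \in C_c^\infty([s,t]\times\R)$ satisfying $0 \leq \rho_n \leq M$ and $\rho_n \to \rho$ in $L^2$; this is achieved by space-time mollification followed by a spatial cutoff, with the convolution by a probability kernel preserving the pointwise upper bound. Step~1 applies to each $\rho_n$; Lemma~\ref{lem:continuity} passes the left-hand side of \eqref{e.p.l2tol2} to the limit, and Corollary~\ref{cor:Fcont}, which is tailored to uniformly bounded $L^2$-convergent sequences, passes the right-hand side.

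\emph{Step 3 (general non-negative case).} For arbitrary $\rho \geq 0$ in $L^2([s,t]\times\R)$, set $\rho_n := \rho \wedge n$; then $\rho_n$ is bounded, so Step~2 applies, and $\rho_n \to \rho$ in $L^2$ by dominated convergence, so Lemma~\ref{lem:continuity} again passes the left-hand side. On the right-hand side, $F$ is manifestly monotone in $\rho$ from \eqref{eq:funcF}, so $F(\rho_n(r,\Cdot)) \uparrow F(\rho(r,\Cdot))$ for almost every $r$, and monotone convergence gives $\int_s^t F(\rho_n(r,\Cdot))\, dr \to \int_s^t F(\rho(r,\Cdot))\, dr$ in $[0,\infty]$ (the claimed bound being trivial if this limit is $+\infty$). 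The substantive analytical content is the energy identity in Step~1; the bookkeeping point that forces the two-tier structure is that Corollary~\ref{cor:Fcont} requires a uniform pointwise bound on the approximating sequence, so one must truncate \emph{before} mollifying in Step~2, which is where I expect to spend a moment's care but no deep difficulty.
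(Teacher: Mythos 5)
Your proposal is correct and follows essentially the same three-step route as the paper: an energy estimate for smooth compactly supported $\rho$ (multiply the PDE by the solution, integrate, recognize the $F$-functional, integrate in time), then a density argument for bounded $\rho$ using Lemma~\ref{lem:continuity} and Corollary~\ref{cor:Fcont}, then a truncation argument for general nonnegative $\rho$ using monotonicity of $F$ and Lemma~\ref{lem:continuity}. The only cosmetic differences are that the paper truncates via $\rho\mathbf{1}_{\{\rho\leq n\}}$ rather than $\rho\wedge n$, and the paper only needs the one-sided bound $F(\rho_n)\leq F(\rho)$ rather than your monotone-convergence observation, which is true but more than is required.
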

\begin{proof}
Fix smooth and compactly supported $\rho$ and $f$. Since $\rho$ is smooth, the function $\Zfn^{s, f}(t, x) = (P(s\to t) f)(x)$ solves \eqref{eq:pde} point-wise.
Multiply both side by $\Zfn$, integrate the result over $x \in \R$ and interchange the integration and differentiation. We have 
\begin{equation*}
\frac{1}{2}  \partial_r 	\normL{\Zfn^{s, f}(r, \Cdot)}^2 \leq F(\rho(r, \Cdot)) \normL{\Zfn^{s, f}(r, \Cdot)}^2.
\end{equation*}
Integrating in time from $s$ to $t$ gives 
\begin{equation*}
\normL{\Zfn^{s, f} (t, \Cdot)}^2 \leq \normL{f}^2 \exp\Big(2\int_s^t F(\rho(r, \Cdot)) dr\Big).
\end{equation*}
For fixed smooth and compact supported $\dev$, by approximation, the above inequality also holds for general $f \in L^2(\R)$.  
Thus, we have proven 
\eqref{e.p.l2tol2} for smooth and compactly supported $\dev$. 
\bigskip
\\
The general result follows by approximation and monotonicity.
Fix arbitrary $M \geq 0$. We first prove that \eqref{e.p.l2tol2} holds for all $\rho \in L^2([s, t] \times \R)$ such that almost everywhere $0 \leq \rho \leq M$. By a density argument, there exists smooth and compactly supported functions $\{\rho_n\}_{n=1}^\infty$ such that $0 \leq \rho_n \leq M$ and $\|\rho_n - \rho\|_{L^2([s, t] \times \R)} \to 0$ as $n \to \infty$. \eqref{e.p.l2tol2} holds when $\rho$ is replaced by $\rho_n$. Let $n \to \infty$, by Lemma \ref{lem:continuity} and Corollary \ref{cor:Fcont}, \eqref{e.p.l2tol2} also holds for $\rho$. 
\bigskip
\\ 
Finally, we prove \eqref{e.p.l2tol2} for  $\rho \in L^2([s, t] \times \R)$ such that almost everywhere $\rho 
\geq 0$. 
As $n \to \infty$, $\rho \mathbf{1}_{\{\rho \leq n\}} \to \rho$ in $L^2([s, t] \times \R)$. In the preceding paragraph, we have proved that \eqref{e.p.l2tol2} holds when $\rho$ is replaced by $\rho \mathbf{1}_{\{\rho \leq n\}}$, for all $n$. Using this and the monotonicity of $F$, we have 
\begin{equation*}
\|P(\rho \mathbf{1}_{\{\rho \leq n\}}; s \to t)\|_{L^2(\R) \to L^2(\R)} \leq \exp\Big(\int_s^t F(\rho \mathbf{1}_{\{\rho \leq n\}} (r, \Cdot)) dr\Big) \leq \exp\Big(\int_s^t F(\rho(r, \Cdot)) dr\Big).
\end{equation*}
Letting $n \to \infty$, by Lemma \ref{lem:continuity}, the left hand side above converges to $\|P(\dev; s \to t)\|_{L^2(\R) \to L^2(\R)}$. Hence, we conclude \eqref{e.p.l2tol2}.
\end{proof}
\indentation Based on the preceding results, we now proceed to prove \eqref{eq:liminftail}. The proof amounts to showing that, 
for any $\dev \in L^2([0, 2\scl] \times \R)$ such that $\Zfn(\dev; 2\scl, 0) \geq \frac{1}{\sqrt{4\pi}} e^\scl$, we have $\frac{1}{2\scl}\|\dev\|^2_{L^2([0, 2\scl] \times \R)} \geq \frac{4}{3} - o_\lambda (1)$ where $\lim_{\scl \to \infty} o_\scl (1)  = 0$. 
\bigskip
\\
With loss of generality, we can assume that \begin{equation}\label{eq:l2assumption}
\frac{1}{2\scl} \|\rho\|^2_{L^2([0, 2\scl]) \times \R} \leq 2.
\end{equation}
Moreover, define $\rho_+ := \min
(\rho, 0)$. Because $\Zfn(\rho_+; 2\scl, 0) \geq \Zfn(\rho; 2\scl, 0)$ and $\|\rho_+\|_{L^2([0, 2\scl] \times \R)} \leq \|\rho\|_{L^2([0, 2\scl] \times \R)}$, we can also assume that $\rho \geq 0$ almost everywhere. 
Note that $\Zfn(\dev; 2\scl, 0) = P(\dev; (0, 0) \to (2\scl, 0))$. Using the semigroup property, we have
\begin{align*}
\Zfn(\dev; 2\scl, 0) &= P(\dev; (0, 0) \to (1, \Cdot)) P(\dev; 1 \to (2\lambda - 1)) P(\dev; (2\lambda - 1, \Cdot) \to (2\lambda, 0))\\
&\leq \normL{P(\dev; (0, 0) \to (1, \Cdot))} \|P(\dev; 1 \to (2\lambda - 1))\|_{L^2(\R) \to L^2(\R)} \normL{P(\dev; (2\lambda - 1, \Cdot) \to (2\lambda, 0))}.
\end{align*}  
Applying \eqref{e.l.ptL2}-\eqref{e.l.L2pt} (take $a = \scl^{\frac{1}{3}}$) to upper bound the first and third term  on the right hand side, using Proposition \ref{p.l2tol2} and \eqref{eq:l2assumption} to upper bound the second term, we have 
\begin{align}
\notag
\Zfn(\dev; 2\scl, 0) 
&\leq  C \|P(\dev; 1 \to (2\lambda - 1))\|_{L^2(\R) \to L^2(\R)}   \exp\big(C\scl^{\frac{2}{3}} + \scl^{-\frac{1}{3}} \|\rho\|^2_{L^2([0, 1] \times \R)} + \scl^{-\frac{1}{3}}\|\rho\|^2_{L^2([2\lambda - 1, 2\lambda] \times \R)}\big) \\
\label{e.t.uptail}
&\leq C \exp(C\scl^{\frac{2}{3}}) \exp\Big(\int_0^{2\scl} F(\rho(r, \Cdot)) dr\Big).
\end{align}
Here $C$ is a universal constant. For the integral above, using Lemma \ref{lem:potbd} to bound $F$, together with Young's inequality, $\normL{\dev(r, \Cdot)}^{\frac{4}{3}} \leq \frac{6^{1/3}}{3}(\normL{\dev(r, \Cdot)}^2 + \frac{2}{3})$ for every $r$, we have
\begin{align*}
\Zfn(\dev; 2\scl, 0) &\leq C\exp(C\scl^{\frac{2}{3}}) \exp\Big(\int_0^{2\scl} \frac{1}{2}\Big(\frac{3}{4}\Big)^{\frac{2}{3}} \normL{\dev(r, \Cdot)}^\frac{4}{3} dr\Big)\\
&\leq C\exp(C\scl^{\frac{2}{3}}) \exp\bigg(\int_0^{2\scl} \frac{1}{4}\Big(\normL{\dev(r, \Cdot)}^2 + \frac{2}{3}\Big) dr\bigg).
\end{align*}
Applying $\frac{1}{\sqrt{4\pi \scl}} e^{\lambda} \leq \Zfn(\dev; 2\scl, 0)$ to lower bound $\Zfn(\dev; 2\scl, 0)$, we have that
that $\scl(1-o_\scl(1)) \leq \frac{1}{4} \|\dev\|^2_{L^2([0, 2\scl] \times \R)} + \frac{\scl}{3}$, where $\lim_{\scl \to }o_\scl (1) = 0$. Hence $\frac{1}{2\scl}\|\dev\|^2_{L^2([0, 2\scl] \times \R)} \geq \frac{4}{3} - o_\lambda (1)$. Letting $\scl \to \infty$ concludes the desired \eqref{eq:liminftail}.                 
\section{Minimizers of the variational formula}\label{sec:minimizer}
\indentation For a $\dev \in L^2([0, 2\scl] \times \R)$, recall that $\Zfn(\dev) = \Zfn(\rho; t, x)$ is the (unique) solution to the PDE
\begin{equation*}
\partial_t \Zfn = \frac{1}{2}\partial_{xx} \Zfn + \rho \Zfn, \qquad (t, x) \in (0, 2\scl] \times\R, \qquad \Zfn(\dev; 0, \Cdot) = \delta_0 (\Cdot).
\end{equation*}
Let $\calK_\scl$ denote the set of minimizers of 
\begin{equation}\label{eq:minimizer}
\inf\Big\{\frac{1}{2}\|\dev\|_{L^2([0, 2\scl] \times \R)}^2:\dev \in L^2([0, 2\scl] \times \R), \, \Zfn(\dev; 2 \scl, 0) \geq \frac{1}{\sqrt{4\pi \scl}} e^\scl\Big\}.
\end{equation}
It is convenient to consider the scaled version of $\calK_\scl$: Let $\tcalK_\scl$ be the set of minimizers of 
\begin{equation}\label{eq:tminimizer}
\inf\Big\{\frac{1}{2}\|\dev\|_{L^2([0, 2] \times \R)}^2: \dev \in L^2([0, 2] \times \R),\, \Zfn(\dev; 2, 0) \geq \frac{1}{\sqrt{4\pi}} e^\scl\Big\}.
\end{equation}
Recall the scaling property 
$\Zfn(\scl\dev(\scl \Cdot, \scl^{\frac{1}{2}} \Cdot); 2, 0) = \scl^{\frac{1}{2}} \Zfn(\dev; 2\scl, 0)$. We have
\begin{equation}\label{eq:calKscaling}
\tcalK_\scl = \{\scl \dev(\scl\Cdot, \scl^{\frac{1}{2}}\Cdot): \dev \in \calK_\scl\}.
\end{equation}
To prepare for the proof of Theorem \ref{thm:limshape}, in this section, we develop a few properties of $\calK_\scl$ for fixed $\scl$. We also study the asymptotic behavior of $\calK_\scl$ as $\scl \to \infty$.
\subsection{Properties of $\calK_\scl$}
\label{sec:calK}
\subsubsection{$\calK_\scl$ is not empty.}
We first show that $\calK_\scl$ is not empty. By the scaling \eqref{eq:calKscaling}, it suffices to show that $\widetilde{\calK}_\scl$ is not empty, i.e. a minimizer of \eqref{eq:tminimizer} exists.
\bigskip
\\
Let us first introduce a few facts that would be useful throughout Section \ref{sec:minimizer}. We take 
a Banach space $\bana$ with Gaussian measure $\mu$ such that $L^2([0, 2] \times \R) $ is the Cameron-Martin space of $(\bana, \mu)$. 
\rev{We further assume the embedding $L^2([0, 2] \times \R) \subseteq \bana$ is dense.}
For a concrete choice of $(\bana, \mu)$, see \cite[Section 2.1.1]{lin2021short}.
The advantage of introducing $\bana$ is that it brings us \emph{compactness}.
Since $L^2([0, 2] \times \R)$ is the Cameron-Martin space of $\bana$, for arbitrary $r \geq 0$, the set $\{\dev: \|\dev\|_{L^2([0, 2] \times \R)} \leq r\}$ is compact in $\bana$. The compactness would be crucial for us to prove the non-emptiness of $\calK_\scl$.
\bigskip
\\
We are going to view $\Zfn$ as a map from $\dev \in L^2([0, 2] \times \R)$ to the space-time function $\Zfn(\dev; \Cdot, \Cdot)$. If we restrict the time and space coordinates in $(t, x) \in [\delta, 2] \times [-\delta^{-1}, \delta^{-1}]$, 
it follows from \cite[Section 2.1]{lin2021short} that $\Zfn$ maps $L^2([0, 2] \times \R)$ to $C([\delta, 2] \times [-\delta^{-1}, \delta^{-1}])$.
\bigskip
\\
We use $L^{\rev{2}}_\bana([0, 2] \times \R)$ to denote the same space $L^2([0,2] \times 
\R)$ with the topology induced by the topology of the Banach space $\bana$. 
\rev{Even though $ \Zfn: L^2_\bana([0, 2] \times \R) \to C([\delta, 2] \times [-\delta^{-1}, \delta^{-1}])$ is not continuous in general, its restriction onto $ \{ \rho: \|\rho\|_{L^2([0, 2] \times \R)} \leq r \} $ is continuous for any $ r<\infty $, as shown in the following lemma.}

\begin{lem}\label{lem:equicontinuity}
Fix arbitrary $\delta > 0$ and $ r<\infty $. The map $\Zfn: \rho \mapsto \Zfn(\rho; \Cdot, \Cdot)$ \rev{is} continuous from $L_\bana^2([0, 2] \times \R) \rev{\cap \{ \rho: \|\rho\|_{L^2([0, 2] \times \R)} \leq r \}}$  
to $C([\delta, 2] \times [-\delta^{-1}, \delta^{-1}])$.
\end{lem}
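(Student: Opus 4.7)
The plan is to establish sequential continuity on the $L^2$-ball of radius $r$. As a preliminary reduction, I would observe that on $\{\rho \in L^2([0,2]\times\R) : \|\rho\|_{L^2([0,2]\times\R)} \leq r\}$ the $\bana$-topology and the weak $L^2$-topology coincide: both are Hausdorff and compact (by the Cameron--Martin structure and Banach--Alaoglu respectively), and since $\bana^* \hookrightarrow L^2$ densely, $\bana$-convergence of an $L^2$-bounded sequence forces weak $L^2$-convergence (any weak subsequential limit is uniquely pinned down against $\bana^*$); hence the identity is a continuous bijection between compact Hausdorff spaces, hence a homeomorphism. Thus it suffices to prove: if $\rho_n \rightharpoonup \rho$ weakly in $L^2$ with $\|\rho_n\|_{L^2}, \|\rho\|_{L^2} \leq r$, then $\Zfn(\rho_n) \to \Zfn(\rho)$ uniformly on $[\delta, 2] \times [-\delta^{-1}, \delta^{-1}]$.

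The main tool is the Picard/Neumann expansion
\[
\Zfn(\rho; t, x) = \sum_{k=0}^\infty Y_k(\rho; t, x), \qquad Y_0 = p(t, x), \qquad Y_{k+1}(\rho; t, x) = \int_0^t\!\!\int_\R p(t-s, x-y)\, \rho(s, y)\, Y_k(\rho; s, y)\, ds\, dy.
\]
Each $Y_k$ is a $k$-multilinear form in $\rho$, and adapting the Cauchy--Schwarz estimates from the proof of Lemma~\ref{l.ptL2} yields a pointwise bound $\sup_{(t,x) \in [\delta, 2] \times [-\delta^{-1}, \delta^{-1}]} |Y_k(\rho; t, x)| \leq a_k$, uniformly for $\|\rho\|_{L^2} \leq r$, with $\sum_k a_k < \infty$. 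The core inductive claim is that $Y_k(\rho_n; t, x) \to Y_k(\rho; t, x)$ uniformly on the rectangle. For the inductive step I split
\begin{align*}
Y_k(\rho_n) - Y_k(\rho) &= \int_0^t\!\!\int p(t-s, x-y)\, (\rho_n - \rho)(s, y)\, Y_{k-1}(\rho; s, y)\, ds\, dy \\
&\quad + \int_0^t\!\!\int p(t-s, x-y)\, \rho_n(s, y)\, [Y_{k-1}(\rho_n) - Y_{k-1}(\rho)](s, y)\, ds\, dy,
\end{align*}
where the first term vanishes by weak convergence tested against the $L^2$-kernel $f_{t,x}(s, y) := p(t-s, x-y)\, Y_{k-1}(\rho; s, y)$; the $L^2$-integrability of $f_{t,x}$ follows from a pointwise bound $|Y_{k-1}(\rho; s, y)| \lesssim p(s, y)$ (extracted from \eqref{e.l.ptpt}) together with the finiteness of $\iint [p(t-s, x-y) p(s, y)]^2\, ds\, dy$, while the second term is handled by Cauchy--Schwarz using $\|\rho_n\|_{L^2} \leq r$ and the inductive hypothesis.

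The step I expect to require the most care is promoting the pointwise convergence in $(t, x)$ to uniformity on $[\delta, 2] \times [-\delta^{-1}, \delta^{-1}]$. I plan to handle this either by verifying that $\{f_{t, x}\}_{(t, x) \in [\delta, 2] \times [-\delta^{-1}, \delta^{-1}]}$ is relatively compact in $L^2([0,2]\times\R)$ (so that weak convergence against this family is automatically uniform) via continuity of $(t, x) \mapsto p(t-\cdot, x-\cdot)$ in $L^2$, or, as a fallback, by establishing equicontinuity of $\{\Zfn(\rho; \cdot, \cdot) : \|\rho\|_{L^2} \leq r\}$ on the rectangle—using standard parabolic regularity with an $L^2$ potential—and invoking Arzelà--Ascoli to upgrade pointwise to uniform convergence. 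Once each $Y_k$ is handled this way, the summable majorant $(a_k)$ and dominated convergence yield $\Zfn(\rho_n) \to \Zfn(\rho)$ in $C([\delta, 2] \times [-\delta^{-1}, \delta^{-1}])$, as claimed.
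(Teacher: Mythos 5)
Your route is genuinely different from the paper's: the paper simply cites an approximation lemma from \cite{lin2021short} (existence of $\bana$-continuous $\varphi_N$ converging uniformly to $\Zfn'$ on $L^2$-balls) and concludes, whereas you give a self-contained argument via the chaos expansion with an elegant preliminary reduction exchanging the $\bana$-topology for the weak $L^2$-topology on the ball. That reduction is correct as stated (continuous bijection of compact Hausdorff spaces), and the bound $\sum_k a_k < \infty$ from \eqref{e.l.ptpt} is also fine.

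There is, however, a gap in the inductive step, specifically in the second term. After Cauchy--Schwarz you are left needing to show
\[
\big\| p(t-\Cdot, x-\Cdot)\,\big[Y_{k-1}(\rho_n) - Y_{k-1}(\rho)\big]\big\|_{L^2([0,t]\times\R)} \longrightarrow 0,
\]
but your inductive hypothesis is uniform convergence of $Y_{k-1}$ on the \emph{bounded} rectangle $[\delta,2]\times[-\delta^{-1},\delta^{-1}]$. The $L^2([0,t]\times\R)$ integral above runs over $s$ near $0$ and over $|y|$ arbitrarily large, neither of which is controlled by sup-norm convergence on the rectangle (in particular you cannot shrink $\delta$, since it is a fixed parameter of the lemma). ``Cauchy--Schwarz plus the inductive hypothesis'' therefore does not close the loop as written. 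Two fixes are available with tools you already have at hand: (i) strengthen the induction to \emph{pointwise} convergence of $Y_{k-1}(\rho_n;s,y)\to Y_{k-1}(\rho;s,y)$ for every $(s,y)\in(0,2]\times\R$, and combine it with your dominating bound $|Y_{k-1}(\rho_n;s,y)|\lesssim_k p(s,y)$ (uniform in $n$) and the integrability of $[p(t-s,x-y)p(s,y)]^2$ to conclude via dominated convergence; or (ii) skip the telescoping entirely by writing $Y_k(\rho;t,x)=\langle K_{t,x},\rho^{\otimes k}\rangle_{L^2(([0,2]\times\R)^k)}$ with $K_{t,x}\in L^2$ (this is exactly the content of the Cauchy--Schwarz computation in Lemma~\ref{l.ptL2}), and use that $\rho_n\rightharpoonup\rho$ with $\|\rho_n\|_{L^2}\le r$ implies $\rho_n^{\otimes k}\rightharpoonup\rho^{\otimes k}$ (test against simple tensors, which are dense, and use the uniform bound $\|\rho_n^{\otimes k}\|\le r^k$). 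The second option also makes the uniformity in $(t,x)$ transparent, since $(t,x)\mapsto K_{t,x}$ is norm-continuous and hence $\{K_{t,x}\}$ is norm-compact, against which weak convergence is automatically uniform.
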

\begin{proof}
Define the extension of $\Zfn$
\begin{equation}\label{eq:Zprime}
\Zfn': \bana \to C([\delta, 2] \times 
[\delta^{-1}, \delta^{-1}]),\qquad \Zfn'(\dev) :=
\begin{cases}
\Zfn(\rho), & \text{ when } \rho \in L^2([0, 2] \times \R), \\
0, & \text{ otherwise.}
\end{cases} 
\end{equation}
By the proof of \cite[Lemma 3.7]{lin2021short} (see the paragraph above Eq (3.18') therein), there exists a sequence of continuous functions $\varphi_N: \bana \to C([\delta, 2] \times [-\delta^{-1}, \delta^{-1}])$ such that for all $r < \infty$,
\begin{equation*}
\lim_{N \to \infty} \sup_{\frac{1}{2}\|\rho\|^2_{L^2([0, 2] \times \R)} \leq r} \|\Zfn'(\dev) - \varphi_N (\rho)\|_{L^\infty([\delta, 2] \times [-\delta^{-1}, \delta^{-1}])} = 0.
\end{equation*}
Using this, we conclude that the map $\Zfn': \bana \rev{\cap \{ \rho: \|\rho\|_{L^2([0, 2] \times \R)} \leq r \}} \to C([\delta, 2] \times [-\delta^{-1}, \delta^{-1}])$ is continuous. By \eqref{eq:Zprime}, we have the desired continuity of $\Zfn: L_\bana^2([0, 2] \times \R) \rev{\cap \{ \rho: \|\rho\|_{L^2([0, 2] \times \R)} \leq r \}} \to C([\delta, 2] \times [-\delta^{-1}, \delta^{-1}])$.
\end{proof}
\indentation Denote the infimum in \eqref{eq:tminimizer} by $\infi$.
\begin{prop}\label{prop:existence}
	Fix $\scl > 0$, the set $\calK_\scl$ is not empty.
\end{prop}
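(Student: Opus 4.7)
The plan is to apply the direct method of the calculus of variations in the abstract Wiener space $(\bana, \mu)$. By the scaling identity \eqref{eq:calKscaling}, it suffices to show that $\tcalK_\scl$ is nonempty, i.e.\ that the infimum $\infi$ in \eqref{eq:tminimizer} is attained. I first note that $\infi < \infty$: the analysis of \eqref{eq:limsuptail} in Section~\ref{sec:uptail} produces, for each fixed $\scl$, a time-independent $\dev(t,x) = (1+\zeta)\sech^2(x)$ with $\zeta$ sufficiently large that lies in $L^2([0,2]\times\R)$ and satisfies $\Zfn(\dev; 2, 0) \geq \frac{1}{\sqrt{4\pi}} e^\scl$.

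Next I would extract a minimizing sequence $\{\dev_n\}_{n \geq 1} \subset L^2([0,2]\times\R)$ with $\Zfn(\dev_n; 2, 0) \geq \frac{1}{\sqrt{4\pi}} e^\scl$ and $\tfrac{1}{2}\|\dev_n\|_{L^2([0,2]\times\R)}^2 \to \infi$. In particular $\|\dev_n\|_{L^2([0,2]\times\R)} \leq r$ for some $r < \infty$ and all $n$. The whole point of working in $\bana$ instead of in $L^2([0,2]\times\R)$ is compactness: since $L^2([0,2]\times\R)$ is the Cameron-Martin space of $(\bana,\mu)$, each $L^2$-ball $\{\dev : \|\dev\|_{L^2([0,2]\times\R)} \leq r\}$ is compact in $\bana$. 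Passing to a subsequence, $\dev_n$ converges in $\bana$ to some $\dev_* \in \bana$. The standard lower semicontinuity of the Cameron-Martin norm with respect to the $\bana$-topology then yields $\dev_* \in L^2([0,2]\times\R)$ together with $\|\dev_*\|_{L^2([0,2]\times\R)}^2 \leq \liminf_n \|\dev_n\|_{L^2([0,2]\times\R)}^2 = 2\infi$.

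Finally, I would verify that $\dev_*$ satisfies the constraint, which is the place where Lemma~\ref{lem:equicontinuity} is essential. Since the sequence $\dev_n$ lies in a fixed $L^2$-ball and converges to $\dev_*$ in the topology of $\bana$, that lemma gives $\Zfn(\dev_n; \Cdot, \Cdot) \to \Zfn(\dev_*; \Cdot, \Cdot)$ in $C([\delta, 2] \times [-\delta^{-1}, \delta^{-1}])$ for any $\delta \in (0,1)$; evaluating at $(2,0)$ yields $\Zfn(\dev_*; 2, 0) \geq \frac{1}{\sqrt{4\pi}} e^\scl$. So $\dev_*$ is feasible, combined with $\tfrac12\|\dev_*\|_{L^2([0,2]\times\R)}^2 \leq \infi$ this forces equality, and $\dev_* \in \tcalK_\scl$.

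The main obstacle is precisely this last step: the map $\dev \mapsto \Zfn(\dev; 2, 0)$ is not continuous on all of $L^2_\bana([0,2]\times\R)$; only its restriction to $L^2$-bounded sets is continuous. Lemma~\ref{lem:equicontinuity} encodes exactly this restricted continuity, and without it the passage to the limit in the constraint would fail. Everything else in the argument is a standard compactness/lower-semicontinuity package, made available by the choice of the abstract Wiener space framework.
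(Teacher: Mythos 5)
Your proof is correct and follows essentially the same route as the paper: choose a minimizing sequence in $L^2([0,2]\times\R)$, exploit compactness of $L^2$-balls in the abstract Wiener space $\bana$ together with $\bana$-lower-semicontinuity of the Cameron--Martin norm to produce a limit $\dev_*\in L^2$, and then invoke Lemma~\ref{lem:equicontinuity} to pass the constraint $\Zfn(\dev;2,0)\geq\frac{1}{\sqrt{4\pi}}e^{\scl}$ to the limit. The only addition you make is an explicit check that the feasible set is nonempty (via the $(1+\zeta)\sech^2$ potential from Section~\ref{sec:uptail}), which the paper leaves implicit.
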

\begin{proof}
	By the scaling \eqref{eq:calKscaling}, it suffices to prove that $\tcalK_\scl$ is not empty, i.e.\ the minimizer of \eqref{eq:tminimizer} exists. 
	Let $\{\rho_n\}_{n \in \Z_{\geq 1}} \subseteq L^2([0, 2] \times \R)$ be such that $\Zfn(\dev_n; 2, 0) \geq \frac{1}{\sqrt{4\pi}} e^{\scl}$ and $\frac{1}{2}\|\dev_n\|_{L^2([0, 2] \times \R)}^2 \downarrow \infi$. Since for arbitrary fixed $r \rev{<\infty} $, the set $\{\rho: \frac{1}{2}\|\rho\|^2_{L^2([0, 2] \times \R)} \leq r\}$ is compact in $\bana$, after passing to a subsequence, $\dev_n$ converges to some $\rho \in L^2([0, 2] \times \R)$ in the topology of $\bana$.  
	By Lemma \ref{lem:equicontinuity}, $\dev \mapsto \Zfn(\dev; 2, 0)$ is a continuous map from $L_\bana([0, 2] \times 
	\R)\rev{\cap \{ \rho: \|\rho\|_{L^2([0, 2] \times \R)} \leq r \}}$ to $\R$. This implies that $\Zfn(\dev; 2, 0) \geq 
	\frac{1}{\sqrt{4\pi}} e^{\scl}$. 
By the compactness of $\{\rho: \frac{1}{2}\|\rho\|^2_{L^2([0, 2] \times \R)} \leq r\}$, we have 
	$\|\dev\|_{L^2([0, 2] \times \R)} \leq \liminf_{n \to \infty} \|\dev_n\|_{L^2([0, 2] \times \R)}$. Hence, $\rho$ is a minimizer of \eqref{eq:tminimizer}. 
\end{proof}
\subsubsection{Symmetric decreasing function in space}\label{sec:sdf}
In this subsection, we show that every $\rho \in \calK_\scl$ is symmetric decreasing in space.
\bigskip
\\
We say that $f$ is \emph{symmetric decreasing} if there exists a non-increasing function $g: [0, \infty) \to [0, \infty)$ such that $f(x) = g(|x|)$ holds for almost every $x \in \R$. We say that $f$ is \emph{strictly symmetric decreasing} if $g$ is strictly decreasing. We say that a measurable function $\dev: [0, T] \times \R \to 
[0, \infty)$ is \emph{symmetric decreasing in space} if for almost every $0 \leq s \leq T$, $\rho(s, \Cdot)$ is symmetric decreasing.
\begin{prop}\label{prop:SD}
Fix $\scl > 0$,	every element $\dev \in \calK_\scl$ is symmetric decreasing in space. 
\end{prop}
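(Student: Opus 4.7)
My plan is to use symmetric decreasing rearrangement in the spatial variable (applied slice-by-slice in time) combined with the Brascamp-Lieb-Luttinger rearrangement inequality. Let $\rho \in \calK_\scl$. Since $\Zfn(\cdot; 2\scl, 0)$ is monotone in $\rho$ and the $L^2$-norm only decreases upon replacing $\rho$ by $\max(\rho, 0)$, I may assume $\rho \geq 0$ almost everywhere. For each $t \in [0, 2\scl]$, let $\rho^\sharp(t, \cdot)$ denote the symmetric decreasing rearrangement of $\rho(t, \cdot)$ in $x$. The layer-cake (Cavalieri) identity preserves the spatial $L^2$-norm slice by slice, hence $\|\rho^\sharp\|_{L^2([0, 2\scl] \times \R)} = \|\rho\|_{L^2([0, 2\scl] \times \R)}$.

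Next, using the Picard series \eqref{eq:semigroup} for $\Zfn(\rho; 2\scl, 0) = P(\rho; (0, 0) \to (2\scl, 0))$, for each fixed tuple $(t_1, \dots, t_n)$ the spatial integrand is a product of non-negative factors — alternating heat kernels $p(t_{i-1} - t_i, x_{i-1} - x_i)$ and $\rho$-factors $\rho(t_i, x_i)$ — each depending on a linear combination of the spatial variables, with the heat kernels already symmetric decreasing. The Brascamp-Lieb-Luttinger inequality \cite{brascamp1974general}, \cite[Sec.~3.7]{LiebLoss} therefore guarantees that replacing each $\rho(t_i, \cdot)$ by $\rho^\sharp(t_i, \cdot)$ does not decrease the spatial integral; summing in $n$ yields $\Zfn(\rho^\sharp; 2\scl, 0) \geq \Zfn(\rho; 2\scl, 0)$.

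To conclude $\rho = \rho^\sharp$, I argue by contradiction. At a minimizer the constraint must be active, $\Zfn(\rho; 2\scl, 0) = \tfrac{1}{\sqrt{4\pi\scl}} e^\scl$, for otherwise the scaling $\rho \mapsto c\rho$ with $c$ slightly less than $1$ would give a cheaper feasible competitor (using continuity and strict monotonicity of $c \mapsto \Zfn(c\rho; 2\scl, 0)$ for $\rho \geq 0$). If $\rho(t, \cdot) \ne \rho^\sharp(t, \cdot)$ on a set of $t$ of positive measure, the strict form of Brascamp-Lieb-Luttinger — available because the Gaussian heat kernels are strictly positive and strictly radially decreasing — gives $\Zfn(\rho^\sharp; 2\scl, 0) > \Zfn(\rho; 2\scl, 0)$. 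By continuity one can then choose $c \in (0, 1)$ with $\Zfn(c\rho^\sharp; 2\scl, 0) \geq \tfrac{1}{\sqrt{4\pi\scl}} e^\scl$ but $\|c\rho^\sharp\|_{L^2([0, 2\scl] \times \R)}^2 < \|\rho\|_{L^2([0, 2\scl] \times \R)}^2$, contradicting $\rho \in \calK_\scl$.

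I expect the main technical obstacle to be the strict form of the Brascamp-Lieb-Luttinger inequality used in the last step: equality analysis for general BLL integrals is delicate (cf.\ Burchard's study of the Riesz rearrangement), but the strict radial monotonicity and positivity of each Gaussian kernel factor should suffice to rule out equality unless $\rho$ coincides with its symmetric decreasing rearrangement up to a null set. If necessary, a two-step Steiner symmetrization scheme (reflecting each half-line onto the other) could be substituted, since strict monotonicity of Steiner symmetrization is easier to extract under the strict Gaussian non-degeneracy.
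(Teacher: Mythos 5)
Your overall plan — Steiner symmetrization in the spatial variable, the Picard expansion of $\Zfn(\rho;2\scl,0)$, Brascamp--Lieb--Luttinger to get the rearrangement inequality term-by-term, and then a contradiction via a slight down-scaling of the rearranged competitor — matches the paper's proof of Proposition~\ref{prop:SD} almost step for step. The reduction to $\rho\geq 0$ and the final scaling argument (choose $c<1$ with $\Zfn(c\rho^\sharp)\geq$ threshold yet $\|c\rho^\sharp\|<\|\rho\|$) are exactly what the paper does. You also do not actually need the ``constraint is active'' step you flag; the paper avoids it and just uses $\Zfn(\rho^\sharp;2\scl,0)>\Zfn(\rho;2\scl,0)\geq\frac{1}{\sqrt{4\pi\scl}}e^\scl$ together with continuity.

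However, you correctly identified the technical obstacle but did not resolve it, and as things stand this is a genuine gap. You invoke a ``strict form of the Brascamp--Lieb--Luttinger inequality'' to deduce $\Zfn(\rho^\sharp)>\Zfn(\rho)$ whenever $\rho\neq\rho^\sharp$, but the equality cases of the general BLL inequality are indeed delicate (Burchard-type analysis is needed for Riesz-type integrals and has nontrivial hypotheses), and you do not supply the argument --- you only gesture at it and suggest a possible Steiner-symmetrization fallback without carrying it out. The paper sidesteps this entirely with a sharper observation: since every summand in the Picard expansion is nonnegative and nondecreasing under rearrangement, it suffices to obtain \emph{strict} inequality in just the $n=1$ term. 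That term is
\[
\int_0^{2}\!\!\int_\R p(2-s,x)\,p(s,x)\,\rho(s,x)\,\d s\,\d x,
\]
and because $p(2-s,\cdot)\,p(s,\cdot)$ is a strictly symmetric decreasing (Gaussian) profile, the classical Hardy--Littlewood inequality (\cite[Theorem 3.4]{LiebLoss}, Lemma~\ref{lem:SDsingle} in the paper) — not the multilinear BLL inequality — gives a clean equality characterization: equality holds for a.e.\ $s$ iff $\rho(s,\cdot)=\rho^\sharp(s,\cdot)$. For all the $n\geq 2$ terms, plain (non-strict) BLL from \cite[Theorem 1.2]{brascamp1974general} is all that is needed. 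This is the missing idea in your write-up: reroute the strictness through the lowest-order chaos term and Hardy--Littlewood, rather than trying to prove a strict BLL inequality.
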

\indentation Let $A \subseteq \R$ be a measurable set with finite Lebesgue measure $|A|$. We define the \emph{symmetric rearrangement} of $A$, denoted $A^*$, as the interval $[-|A|/2, |A|/2]$. Given a measurable function $f: \R \to [0, \infty)$, we can express $f$ by the layer-cake representation $f(x) = \int_0^\infty \ind_{\{y \in \R: f(y) > \ell\}} (x) d\ell$. We define its \emph{symmetric decreasing rearrangement} as 
\begin{equation*}
f^*(x) := \int_0^\infty \ind_{\{y \in \R: f(y) > \ell\}^*} (x) d\ell.
\end{equation*} 
\indentation It is straightforward to show that for arbitrary fixed $\ell \geq 0$, the sets $\{x: f(x) \geq \ell\}$ and $\{x: f^*(x) \geq \ell\}$ have the same Lebesgue measure. As a consequence, $\|f\|_{L^p(\R)} = \|f^*\|_{L^p(\R)}$ for every $p \geq 1$.
\bigskip
\\
Let $\rho:[0, T] \times \R \to [0, \infty)$. We define the \emph{Steiner symmetrization} of $\rho$ along the time-axis, denoted $\devs$, as follows: For every fixed $0 \leq s \leq T$, we define the function $\devs(s, \Cdot):= \rho^*(s, \Cdot)$.
\bigskip
\\
To prove Proposition \ref{prop:SD}, we rely on the following two rearrangement inequalities known respectively as the Hardy-Littlewood inequality and the Brascamp-Lieb-Luttinger inequality.
\begin{lem}[{\cite[Theorem 3.4]{LiebLoss}}]
	\label{lem:SDsingle}
	For any non-negative $f, g \in L^2(\R)$, one has 
	\begin{equation*}
	\int_\R f(x) g(x) dx \leq \int_\R f^*(x) g^*(x) dx.
	\end{equation*}
Moreover, if $f = f^*$ is strictly symmetric decreasing, then the inequality above becomes an equality if and only if $g = g^*$ almost everywhere.
\end{lem}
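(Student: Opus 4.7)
The plan is to use the layer-cake representation to reduce the claim to a comparison of measures of intersections of level sets. First I would write $f(x) = \int_0^\infty \ind_{\{f>s\}}(x)\,ds$ and likewise for $g$, and then apply Fubini to obtain
\[
\int_\R f(x)\, g(x)\,dx = \int_0^\infty\!\!\int_0^\infty |\{f > s\} \cap \{g > t\}|\,ds\,dt.
\]
The inequality would then follow from the elementary observation that $|A \cap B| \leq \min(|A|, |B|)$ for measurable sets of finite measure, combined with the identity $|A^* \cap B^*| = \min(|A|, |B|)$, which holds because $A^*$ and $B^*$ are intervals centered at the origin, so one contains the other. Applying this to $A = \{f > s\}$ and $B = \{g > t\}$ --- whose symmetric rearrangements are $\{f^* > s\}$ and $\{g^* > t\}$ by the definition of the symmetric decreasing rearrangement --- and integrating in $(s,t)$ gives the desired inequality.

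For the equality case, assume $f = f^*$ is strictly symmetric decreasing. Then for a.e. $s > 0$, the set $\{f > s\}$ is an open interval $(-r_s, r_s)$, and $s \mapsto r_s$ is strictly monotone with image covering $(0, \infty)$. Equality in the integrated Hardy-Littlewood inequality forces, for a.e. $(s,t)$,
\[
|\{g > t\} \cap (-r_s, r_s)| = \min(|\{g > t\}|, 2 r_s).
\]
When $2 r_s \geq |\{g > t\}|$, this means $\{g > t\} \subseteq (-r_s, r_s)$ up to a null set. Choosing $r_s$ arbitrarily close to $|\{g > t\}|/2$ from above then pins $\{g > t\}$ down to the interval $(-|\{g > t\}|/2, |\{g > t\}|/2)$ up to a null set, which is precisely $\{g^* > t\}$. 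Applying the layer-cake representation once more would then yield $g = g^*$ almost everywhere. The reverse implication (if $g = g^*$ a.e., then equality holds) is immediate from the first part applied to $(f^*, g^*)$, since rearrangement preserves $L^p$ norms and thus both sides of the inequality.

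The main obstacle will be the careful handling of null-set issues in the equality analysis: the identity for the level-set measures only holds on a set of full $(s,t)$-measure, and one must use the strict monotonicity of $s \mapsto r_s$ together with a Fubini-type argument to conclude that $\{g > t\}$ equals a centered interval for a.e. $t$. The hypothesis that $f$ is \emph{strictly} symmetric decreasing is exactly what guarantees that the level sets of $f$ sweep out all possible radii, which is essential here; without strict monotonicity one could only conclude that $\{g > t\}$ has the right measure, not that it coincides with the centered interval.
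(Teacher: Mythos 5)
The paper does not prove this lemma: it is cited directly to \cite[Theorem~3.4]{LiebLoss} and used as a black box. Your layer-cake proof is correct and is in fact the standard argument one finds in Lieb--Loss. The inequality step, $|A\cap B|\le\min(|A|,|B|)=|A^*\cap B^*|$ applied to level sets and integrated in $(s,t)$, is exactly right. For the equality case, your use of strict monotonicity is also sound, but it is worth making explicit one point you implicitly rely on: under the paper's definition, a \emph{strictly} symmetric decreasing $f$ has a profile $g:[0,\infty)\to[0,\infty)$ that is strictly decreasing on all of $[0,\infty)$, hence $f>0$ everywhere, so the level-set radii $r_s$ sweep out all of $(0,\infty)$ as $s$ ranges over $(0,\sup f)$; moreover, even if the profile of $f$ has jump discontinuities, for every $r_0,\delta>0$ the set $\{s: r_s\in(r_0,r_0+\delta)\}$ has positive Lebesgue measure, so one can always select $s$ from the full-measure set where the pointwise equality $|\{g>t\}\cap(-r_s,r_s)|=\min(|\{g>t\}|,2r_s)$ holds. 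With that observation the Fubini-and-null-set bookkeeping you flag as the ``main obstacle'' goes through, and the conclusion $\{g>t\}=\{g^*>t\}$ up to null sets for a.e.\ $t$ (hence $g=g^*$ a.e.\ by one more layer-cake integration) is correct.
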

\begin{lem}[{\cite[Theorem 1.2]{brascamp1974general}}]
	\label{lem:SDmulti}
	Let $f_j, 1 \leq j \leq k$, be non-negative measurable functions on $\R$, and let $a_{jm}$, $1 \leq j \leq k$, $1 \leq m \leq n$ be real numbers, then 
	\begin{equation*}
	\int_{\R^n} \prod_{i=1}^k f_j(\sum_{m=1}^n a_{jm} x_m) d x_1 \dots dx_n \leq \int_{\R^n} \prod_{i=1}^k f_j^*(\sum_{m=1}^n a_{jm} x_m) d x_1 \dots dx_n.
	\end{equation*}
\end{lem}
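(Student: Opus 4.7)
The plan is to show that any minimizer $\rho \in \calK_\scl$ must coincide (a.e.) with its Steiner symmetrization $\rho^s$ along the spatial axis. The argument proceeds in three stages: first reduce to the case $\rho \geq 0$, then show that symmetrization weakly improves the constraint value $\Zfn(\rho; 2\scl, 0)$ while preserving the $L^2$ norm, and finally use a small rescaling $c\rho^s$ with $c<1$ to promote ``weakly improves'' into the equality $\rho = \rho^s$. For the first stage, set $\rho_+ := \max(\rho, 0)$. The Feynman--Kac formula \eqref{eq:feynmankac} gives $\Zfn(\rho_+; 2\scl, 0) \geq \Zfn(\rho; 2\scl, 0) \geq \frac{1}{\sqrt{4\pi\scl}} e^{\scl}$, while $\|\rho_+\|_{L^2} \leq \|\rho\|_{L^2}$ with strict inequality unless $\rho \geq 0$ a.e.; minimality of $\rho$ therefore forces $\rho \geq 0$.

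For the second stage, assume $\rho \geq 0$ and iterate the mild form \eqref{eq:pdemild} to obtain a convergent non-negative Duhamel series $\Zfn(\rho; 2\scl, 0) = \sum_{n \geq 0} J_n$, where $J_0 = p(2\scl, 0)$ and for $n \geq 1$,
\begin{equation*}
J_n = \int\limits_{0 < s_n < \cdots < s_1 < 2\scl} \int\limits_{\R^n} p(2\scl - s_1, y_1)\, p(s_n, y_n) \prod_{i=1}^{n-1} p(s_i - s_{i+1}, y_i - y_{i+1}) \prod_{i=1}^{n} \rho(s_i, y_i)\, dy\, ds.
\end{equation*}
For each fixed $(s_1, \dots, s_n)$, the spatial integrand is a product of $2n+1$ non-negative functions, each a function of some linear combination of $y_1, \dots, y_n$; the heat kernels are already symmetric decreasing and thus unaffected by rearrangement, so applying Lemma \ref{lem:SDmulti} yields $J_n|_\rho \leq J_n|_{\rho^s}$. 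Summing gives $\Zfn(\rho; 2\scl, 0) \leq \Zfn(\rho^s; 2\scl, 0)$, while $\|\rho^s\|_{L^2} = \|\rho\|_{L^2}$ since rearrangement preserves $L^2$ in every time slice.

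For the third stage, suppose that $\rho \neq \rho^s$ on a positive-measure set of times. The $n = 1$ term is $J_1 = \int_0^{2\scl} ds_1 \int_\R q_{s_1}(y_1)\, \rho(s_1, y_1)\, dy_1$ with $q_{s_1}(y) := p(2\scl - s_1, y)\, p(s_1, y)$, a strictly symmetric decreasing Gaussian in $y$. The equality case of Lemma \ref{lem:SDsingle} then yields $J_1|_\rho < J_1|_{\rho^s}$, and combined with the non-strict bounds for $n \neq 1$, $\Zfn(\rho^s; 2\scl, 0) > \Zfn(\rho; 2\scl, 0) \geq \frac{1}{\sqrt{4\pi\scl}} e^{\scl}$. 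Since $c \mapsto \Zfn(c\rho^s; 2\scl, 0)$ is continuous, there exists $c \in (0, 1)$ with $\Zfn(c\rho^s; 2\scl, 0) \geq \frac{1}{\sqrt{4\pi\scl}} e^{\scl}$; then $\|c\rho^s\|_{L^2} = c\|\rho\|_{L^2} < \|\rho\|_{L^2}$ contradicts $\rho \in \calK_\scl$. Hence $\rho = \rho^s$, i.e., $\rho(s, \cdot)$ is symmetric decreasing for a.e.\ $s \in [0, 2\scl]$.

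The main obstacle is justifying the termwise rearrangement inequality when $\rho$ is merely $L^2$ rather than bounded, since the Duhamel series converges but may not be amenable to direct pointwise manipulation. This should be handled via a standard truncation $\rho_n := \min(\rho, n)$, applying Lemma \ref{lem:SDmulti} to each $\rho_n$, and passing to the limit using the continuity of $\Zfn$ on $L^2$-bounded sets afforded by Lemma \ref{lem:equicontinuity}.
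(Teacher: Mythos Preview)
Your proposal does not address the stated lemma at all. Lemma~\ref{lem:SDmulti} is the Brascamp--Lieb--Luttinger inequality, which the paper simply \emph{cites} from \cite[Theorem~1.2]{brascamp1974general} without proof. What you have written is instead a proof of Proposition~\ref{prop:SD} (every $\rho\in\calK_\scl$ is symmetric decreasing in space), and in fact you \emph{invoke} Lemma~\ref{lem:SDmulti} as a tool in your second stage. So as a proof of the statement you were given, the proposal is vacuous.

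If one reads your proposal as a proof of Proposition~\ref{prop:SD}, then it is essentially identical to the paper's argument: the paper splits the work into Lemma~\ref{lem:ZfnSD} (showing $\Zfn(\rho;2,0)\leq\Zfn(\rho^s;2,0)$ with equality iff $\rho$ is symmetric decreasing, via the Duhamel expansion and Lemmas~\ref{lem:SDsingle}--\ref{lem:SDmulti}) and then the short contradiction argument with the scaling $\theta\rho^s$, exactly your three stages. The only cosmetic difference is that the paper first passes to $\tcalK_\scl$ by scaling and works at terminal time $2$, whereas you stay at time $2\scl$. Your final paragraph about truncating $\rho$ to justify the termwise rearrangement is unnecessary: for $\rho\in L^2$ the Duhamel series \eqref{e.l.chaoexpan} consists of well-defined non-negative terms (see the bounds in Lemma~\ref{l.ptL2}), and Lemma~\ref{lem:SDmulti} applies directly to each spatial integral without any boundedness assumption on the $f_j$.
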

\noindent The following result contains the major step for proving Proposition \ref{prop:SD}.
\begin{lem}\label{lem:ZfnSD}
	If $\dev \in L^2([0, 2] \times \R)$ is non-negative, then $\Zfn(\dev; 2, 0) \leq \Zfn(\devs; 2, 0)$. In addition, the equality holds if and only if $\dev$ is symmetric decreasing in space.	
\end{lem}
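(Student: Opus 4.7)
The plan is to express $\Zfn(\rho; 2, 0)$ as a Picard series using the kernel expansion \eqref{eq:semigroup}. Specializing to $s=0$, $x=0$, $t=2$, $y=0$, we obtain
\begin{equation*}
\Zfn(\rho; 2, 0) = p(2, 0) + \sum_{n=1}^\infty \int_{0 < t_n < \dots < t_1 < 2} \int_{\R^n} p(2-t_1, x_1)\, p(t_n, x_n) \prod_{i=2}^{n} p(t_{i-1}-t_i, x_{i-1}-x_i) \prod_{i=1}^n \rho(t_i, x_i)\, dx_i\, dt_i.
\end{equation*}
Since $\rho \geq 0$, every term in the series is non-negative, so the comparison $\Zfn(\rho; 2, 0) \leq \Zfn(\devs; 2, 0)$ reduces to a termwise comparison, and the series converges absolutely by the moment bounds already proved in Lemma \ref{l.ptL2}.

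For the $n$-th term, the idea is to freeze the time variables $t_1 > \dots > t_n$ and to view the spatial integrand as a product of functions, each depending on a linear combination of $x_1, \dots, x_n$: the factor $p(2-t_1, x_1)$ depends on $x_1$; the factor $p(t_n, x_n)$ depends on $x_n$; each Gaussian $p(t_{i-1}-t_i, x_{i-1}-x_i)$ depends on the linear form $x_{i-1}-x_i$; and each $\rho(t_i, \cdot)$ depends on $x_i$. This is precisely the setup of the Brascamp--Lieb--Luttinger rearrangement inequality (Lemma \ref{lem:SDmulti}). Since each heat kernel $p(\tau, \cdot)$ is already symmetric decreasing, its rearrangement equals itself, while the rearrangement of $\rho(t_i, \cdot)$ is $\devs(t_i, \cdot)$. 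Applying Lemma \ref{lem:SDmulti} yields the spatial inequality for each fixed time slice; integrating in $t_1, \dots, t_n$ gives the termwise inequality, and summing over $n$ gives $\Zfn(\rho; 2, 0) \leq \Zfn(\devs; 2, 0)$.

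For the equality case, if $\rho = \devs$ almost everywhere then equality is trivial. Conversely, suppose $\Zfn(\rho; 2, 0) = \Zfn(\devs; 2, 0)$. Termwise non-negativity then forces equality in every term, including the $n=1$ term
\begin{equation*}
\int_0^2 \int_\R p(2-t_1, x_1)\, p(t_1, x_1)\, \rho(t_1, x_1)\, dx_1\, dt_1 = \int_0^2 \int_\R p(2-t_1, x_1)\, p(t_1, x_1)\, \devs(t_1, x_1)\, dx_1\, dt_1.
\end{equation*}
The weight $x \mapsto p(2-t_1, x)\, p(t_1, x)$ is a centered Gaussian, hence \emph{strictly} symmetric decreasing for every $t_1 \in (0, 2)$. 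By Lemma \ref{lem:SDsingle}, the inner inequality at a fixed $t_1$ is strict unless $\rho(t_1, \cdot) = \rho(t_1, \cdot)^*$ almost everywhere; equality in the outer (time) integral therefore requires this for almost every $t_1 \in [0, 2]$, which is exactly the statement that $\rho$ is symmetric decreasing in space.

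The main thing to verify carefully is the applicability of Lemma \ref{lem:SDmulti}: one has to write out the affine forms and check that the heat kernel factors, together with the factors $\rho(t_i, x_i)$, fit the template $f_j\!\left(\sum_m a_{jm} x_m\right)$. Once this bookkeeping is in place, the rest is a clean application of the two rearrangement inequalities recorded in Lemmas \ref{lem:SDsingle} and \ref{lem:SDmulti}, together with the strict symmetric decrease of the Gaussian weight to upgrade to the ``if and only if'' statement.
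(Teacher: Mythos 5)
Your proposal is correct and follows essentially the same route as the paper: expand $\Zfn(\rho;2,0)$ as a Picard/chaos series, apply the Brascamp--Lieb--Luttinger rearrangement inequality (Lemma~\ref{lem:SDmulti}) termwise in the spatial variables for each fixed time slice, and use the $n=1$ term together with the strict symmetric decrease of the Gaussian weight $p(2-s,\cdot)p(s,\cdot)$ and the equality case of Lemma~\ref{lem:SDsingle} to get the ``if and only if.'' The only cosmetic difference is that the paper invokes Lemma~\ref{lem:SDsingle} directly for the $n=1$ term and reserves Lemma~\ref{lem:SDmulti} for $n\geq 2$, while you apply Lemma~\ref{lem:SDmulti} uniformly; this is immaterial since the one-variable case of Brascamp--Lieb--Luttinger reduces to Hardy--Littlewood.
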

\begin{proof}
	Note that 
	\begin{equation}
	\label{e.l.chaoexpan}
	\Zfn(\dev; 2, 0) 
	= p(2, 0) + \sum_{n = 1}^\infty \int_{0 < t_n < \dots < t_1 < t_0} \int_{\mathbb{R}^n}  \prod_{i=1}^{n+1} p(t_{i-1}-t_i, x_{i-1} - x_i) \prod_{i=1}^n \dev(t_i, x_i) dt_i dx_i, 
	\end{equation}
	where $t_0 = 2$ and $x_0 = t_{n+1} = x_{n+1} = 0$. For the $n = 1$ term in the sum above, by Lemma \ref{lem:SDsingle}, 
	\begin{equation}\label{e.l.firstterm}
	\int_0^2 \int_\R p(2-s, x) p(s, x) \dev(s, x) dsdx \leq \int_0^2 \int_\R p(2-s, x) p(s, x) \devs (s, x) dsdx.
	\end{equation} 
	Since $p(2-s, \Cdot) p(s, \Cdot)$ is strictly symmetric decreasing for every fixed $s \in (0, 2)$, the above becomes an equality if and only if $\dev$ is symmetric decreasing in space. 
	\bigskip
	\\
	For the $n \geq 2$ term. View the integrand as product of functions in space for fixed $t_n <  t_{n-1} <  \dots < t_1$ and note that $p(t, \Cdot) = p^{\mathsf{s}}(t, \Cdot)$. By Lemma \ref{lem:SDmulti},
	\begin{align}\notag
	&\int_{0 < t_n < \dots < t_1 < t_0} \int_{\R^n} \prod_{i=1}^{n+1} p(t_{i-1} - t_i, x_{i-1} - x_i) \prod_{i=1}^n \dev(t_i, x_i) dt_i dx_i\\
	\label{e.l.higherterm}
	&\leq \int_{0 < t_n < \dots < t_1 < t_0} \int_{\R^n} \prod_{i=1}^{n+1} p(t_{i-1} - t_i, x_{i-1} - x_i) \prod_{i=1}^n \devs(t_i, x_i) dt_i dx_i.
	\end{align}
	Combining \eqref{e.l.chaoexpan}-\eqref{e.l.higherterm}, we conclude that $\Zfn(\dev; 2, 0) \leq \Zfn(\devs; 2, 0)$. Furthermore, the equality holds if and only if $\dev$ is symmetric decreasing in space.
\end{proof}
\begin{proof}[Proof of Proposition \ref{prop:SD}]
By the scaling \eqref{eq:calKscaling}, it suffices to prove Proposition \ref{prop:SD} for $\tcalK_\scl$. 
First, we claim that if $\dev \in \tcalK_\scl$, then $\dev \geq 0$ almost everywhere. 
Define $\dev_+ = \max(\dev, 0)$. The claim follows from $\Zfn(\dev; 2\scl, 0) \leq \Zfn(\dev_+; 2\scl, 0)$ and $\|\dev\|_{L^2([0, 2] \times \R)} > \|\dev_+\|_{L^2([0, 2] \times \R)}$, if $\rho$ is not almost everywhere non-negative. Next, suppose that there exists $\dev \in \tcalK_\scl$ that is not symmetric decreasing in space. By Lemma \ref{lem:ZfnSD}, we have $\Zfn(\devs; 2, 0) > \Zfn(\dev; 2, 0) \geq \frac{1}{\sqrt{4\pi}} e^\scl$. By the continuity of $\dev \mapsto \Zfn(\dev; 2, 0)$ in $L^2([0, 2] \times \R)$, there exists some $0 < \theta < 1$ such that $\Zfn(\theta \devs; 2, 0) = \frac{1}{\sqrt{4\pi}} e^{\scl}$. However,
\begin{equation*}
\|\theta \devs\|_{L^2([0, 2] \times \R)} < \|\devs\|_{L^2([0, 2] \times \R)} = \|\dev\|_{L^2([0, 2] \times \R)},
\end{equation*}
which contradicts with $\rho \in \tcalK_\scl$.
\end{proof}
\subsection{The $\e \to 0$ limit of $h_{\e, \scl}$ for fixed $\scl$}
\label{sec:elimit}
For $f \in C([\delta, 2] \times [-\delta^{-1}, \delta^{-1}])$ and $\mathcal{A} \subseteq C([\delta, 2] \times [-\delta^{-1}, \delta^{-1}])$, define $$\dist_\delta (f, \mathcal{A}) := \inf\{\|f - g\|_{L^\infty([\delta, 2] \times [-\delta^{-1}, \delta^{-1}])}: g\in \mathcal{A}\}.$$
Recall that we denote the infimum in \eqref{eq:tminimizer} by $\infi$.
\begin{lem}\label{lem:dist}
Consider $\{\rho_n\}_{n \in \Z_{\geq 1}} \subseteq L^2([0, 2] \times \R)$ such that $\Zfn(\rho_n; 2, 0) \geq 
\frac{1}{\sqrt{4\pi}} e^\scl$. If $\lim_{n \to \infty}\frac{1}{2}\|\rho_n\|_{L([0, 2] \times \R)}^2 = \infi$, then we have 
\begin{equation}\label{eq:miniconverg}
\lim_{n \to \infty}\dist_\delta (\Zfn(\rho_n), \Zfn(\tcalK_\scl)) = 0.
\end{equation}
\end{lem}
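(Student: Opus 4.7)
The plan is a compactness-plus-continuity argument via contradiction, mirroring the structure of the proof of Proposition~\ref{prop:existence}. Suppose the conclusion fails. Then there exist $ \varepsilon_0 > 0 $ and a subsequence (still indexed by $n$) such that $\dist_\delta(\Zfn(\rho_n), \Zfn(\tcalK_\scl)) \geq \varepsilon_0$ for all $n$. The assumption $\tfrac{1}{2}\|\rho_n\|_{L^2([0,2]\times\R)}^2 \to \infi$ implies the $\rho_n$ lie in a bounded $L^2$ ball; call its radius $r$. Since $L^2([0,2]\times\R)$ is the Cameron--Martin space of $(\bana,\mu)$, the level set $\{\rho : \|\rho\|_{L^2([0,2]\times\R)} \leq r\}$ is compact in $\bana$, so after passing to a further subsequence $\rho_n \to \rho_\infty$ in the topology of $\bana$, for some $\rho_\infty \in L^2([0,2]\times\R)$ with $\|\rho_\infty\|_{L^2([0,2]\times\R)} \leq r$.

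Next I would invoke Lemma~\ref{lem:equicontinuity}: the restriction of $\Zfn$ to $L^2_\bana([0,2]\times\R) \cap \{\rho : \|\rho\|_{L^2([0,2]\times\R)} \leq r\}$ is continuous into $C([\delta',2]\times[-(\delta')^{-1},(\delta')^{-1}])$ for every $\delta' > 0$. Choosing $\delta' = \min(\delta, 1)$ (so that the domain contains both $[\delta,2]\times[-\delta^{-1},\delta^{-1}]$ and the point $(2,0)$), continuity yields $\Zfn(\rho_n) \to \Zfn(\rho_\infty)$ uniformly on that domain. In particular $\Zfn(\rho_n; 2, 0) \to \Zfn(\rho_\infty; 2, 0)$, and since $\Zfn(\rho_n;2,0) \geq \tfrac{1}{\sqrt{4\pi}} e^\scl$ for every $n$, the limit $\rho_\infty$ also satisfies the constraint $\Zfn(\rho_\infty; 2, 0) \geq \tfrac{1}{\sqrt{4\pi}} e^\scl$.

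Now I would verify that $\rho_\infty \in \tcalK_\scl$. Lower semi-continuity of the Cameron--Martin norm with respect to the weaker $\bana$ topology gives $\|\rho_\infty\|_{L^2([0,2]\times\R)}^2 \leq \liminf_{n\to\infty} \|\rho_n\|_{L^2([0,2]\times\R)}^2 = 2\infi$. Combined with the constraint just verified, this forces $\tfrac{1}{2}\|\rho_\infty\|_{L^2([0,2]\times\R)}^2 = \infi$, so $\rho_\infty \in \tcalK_\scl$ and $\Zfn(\rho_\infty) \in \Zfn(\tcalK_\scl)$. But $\Zfn(\rho_n) \to \Zfn(\rho_\infty)$ in $L^\infty([\delta,2]\times[-\delta^{-1},\delta^{-1}])$ then yields $\dist_\delta(\Zfn(\rho_n), \Zfn(\tcalK_\scl)) \to 0$ along the subsequence, contradicting our standing assumption. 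This contradiction proves \eqref{eq:miniconverg}.

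The only step that requires any care is the appeal to Lemma~\ref{lem:equicontinuity}: one must note that continuity of $\Zfn$ on the $\bana$-closure of a bounded $L^2$ ball (rather than on all of $L^2_\bana$) is exactly what the lemma provides, which is why the boundedness of $\|\rho_n\|_{L^2}$ coming from $\tfrac{1}{2}\|\rho_n\|^2_{L^2} \to \infi$ is essential; without this the $\bana$-limit $\rho_\infty$ might not lie in $L^2$ and the continuity would break down. Everything else is a routine application of compactness and lower semi-continuity of the Cameron--Martin norm.
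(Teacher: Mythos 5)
Your proposal is correct and follows essentially the same route as the paper's proof: compactness of $L^2$-balls in $\bana$, continuity of $\Zfn$ on those balls via Lemma~\ref{lem:equicontinuity}, and lower semi-continuity of the Cameron--Martin norm to identify the subsequential $\bana$-limit as an element of $\tcalK_\scl$. The only cosmetic difference is that you phrase it as a contradiction, while the paper argues directly by invoking uniform continuity (continuity plus compactness) to transfer $\bana$-convergence to convergence in $C([\delta,2]\times[-\delta^{-1},\delta^{-1}])$.
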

\begin{proof}
The proof 
is similar to the proof of Proposition \ref{prop:existence}.
Fix arbitrary $r \geq 0$. The set $\{\rho: \|\rho\|_{L^2([0, 2] \times \R)} \leq r\}$ is compact in $\bana$. 
Using this, after passing to a subsequence, $\dev_n$ converges to some $\rho \in L^2([0, 2] \times 
\R)$ in the topology $\bana$. In addition, 
$\Zfn(\dev; 2, 0) = \lim_{n \to \infty} \Zfn(\dev_n; 2, 0) \geq \frac{1}{\sqrt{4\pi}} e^\scl$ and 
$\|\dev\|_{L^2([0, 2] \times \R)} \leq \liminf_{n \to \infty} \|\dev_n\|_{L^2([0, 2] \times \R)}$.
Hence   
$\dev \in \tcalK_\scl$. This implies that 
\begin{equation}\label{eq:devconverg}
\lim_{n \to 
	\infty}\inf\{\|\dev_n - \dev\|_\bana: \rho \in \tcalK_\scl\} = 0.
\end{equation} 
Further, by Lemma \ref{lem:equicontinuity}, the map $\Zfn: L_\bana^2([0, 2] \times \R) \rev{\cap \{ \rho: \|\rho\|_{L^2([0, 2] \times \R)} \leq r \}}
\to C([\delta, 2] \times [-\delta^{-1}, \delta^{-1}])$ is continuous. 
Since $\{\rho: \|\rho\|_{L^2([0, 2] \times \R)} \leq r\}$ is compact in $\bana$, 
the map
$$\Zfn: \{\dev: \|\dev\|_{L^2([0, 2] \times \R)} \leq r\} 
\to C([\delta, 2] \times [-\delta^{-1}, \delta^{-1}])$$ is uniformly continuous, where we endow $ \{\dev:\|\dev\|_{L^2([0, 2] \times \R)} \leq r\}$ 
with the topology induced by $\bana$. Using the uniform continuity of $\Zfn$ and the convergence in  \eqref{eq:devconverg}, we conclude \eqref{eq:miniconverg}.
\end{proof}
\indentation Recall that $h_{\e, \scl} = \scl^{-1} \log  Z_\e (t, \scl^{\frac{1}{2}} x)$. Define $\hfn_\scl(\dev; t, x) := \scl^{-1}\log\big(\scl^{\frac{1}{2}} \Zfn (\dev; \scl t, \scl x)\big)$. 
\begin{prop}\label{prop:hfncalK}
Fix $\scl, \delta >0$. We have 
\begin{equation*}
\lim_{\e \to 0} \P\big[\dist_\delta (h_{\e, \scl}, \hfn_\scl(\calK_\scl)) < \delta\, \big|\, Z_\e (2, 0) \geq \frac{1}{\sqrt{4\pi}} e^\scl\big] = 1.
\end{equation*}
\end{prop}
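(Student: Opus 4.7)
The plan is to apply the Freidlin--Wentzell LDP (Proposition~\ref{prop:FWLDP}) for $ Z_\e $ on a time-space domain large enough to accommodate the spatial rescaling $ x \mapsto \scl^{1/2} x $ that enters $ h_{\e,\scl} $, extract concentration of $ Z_\e $ around $ \Zfn(\tcalK_\scl) $ under the upper-tail conditioning, and convert this into the claimed concentration of $ h_{\e,\scl} $ around $ \hfn_\scl(\calK_\scl) $ via the scaling identity \eqref{eq:calKscaling}. Concretely, I will pick $ \delta' > 0 $ small enough that $ [\delta, 2] \times \scl^{1/2}[-\delta^{-1}, \delta^{-1}] \subseteq [\delta', 2] \times [-(\delta')^{-1}, (\delta')^{-1}] $ and work in $ \Omega := C([\delta', 2] \times [-(\delta')^{-1}, (\delta')^{-1}]) $, on which $ \{Z_\e\} $ satisfies an LDP with speed $ \e^{-1} $ and rate $ I $. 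Setting $ G := \{f \in \Omega : f(2,0) \geq \tfrac{1}{\sqrt{4\pi}}\,e^\scl\} $ and, for $ \eta > 0 $, $ A_\eta := G \cap \{f : \dist_{\delta'}(f, \Zfn(\tcalK_\scl)) \geq \eta\} $ (both closed in $ \Omega $), the goal reduces to showing $ \P[Z_\e \in A_\eta \mid Z_\e \in G] \to 0 $.

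The heart of the argument consists of two LDP-level estimates. First, $ \lim_{\e \to 0} \e \log \P[Z_\e \in G] = -\infi $: Proposition~\ref{prop:existence} guarantees $ \tcalK_\scl \ne \emptyset $; for $ \rho^* \in \tcalK_\scl $ Proposition~\ref{prop:SD} yields $ \rho^* \geq 0 $, and the Feynman--Kac formula \eqref{eq:feynmankac} implies that $ \theta \mapsto \Zfn((1+\theta)\rho^*; 2, 0) $ is strictly increasing, so the perturbations $ (1+\theta)\rho^* $ land in the open set $ G^\circ = \{f(2,0) > e^\scl/\sqrt{4\pi}\} $ with rate-function values $ (1+\theta)^2 \infi \downarrow \infi $, matching the trivial upper bound $ \inf_G I = \infi $. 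Second, for every $ \eta > 0 $, $ \inf_{A_\eta} I \geq \infi + c_0(\eta) $ for some $ c_0(\eta) > 0 $; were this to fail, one could extract $ f_n \in A_\eta $ and near-optimizers $ \rho_n \in L^2([0,2]\times\R) $ with $ \Zfn(\rho_n) = f_n $, $ \tfrac12\|\rho_n\|_{L^2}^2 \to \infi $, and $ \Zfn(\rho_n; 2, 0) \geq e^\scl/\sqrt{4\pi} $, and Lemma~\ref{lem:dist} (applied with $ \delta $ replaced by $ \delta' $) would force $ \dist_{\delta'}(f_n, \Zfn(\tcalK_\scl)) \to 0 $, contradicting $ f_n \in A_\eta $. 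Combining the two estimates with the LDP upper bound on the closed set $ A_\eta $ delivers the claimed conditional decay.

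Finally I will translate concentration of $ Z_\e $ into concentration of $ h_{\e,\scl} $. Under the bijection between $ \tcalK_\scl $ and $ \calK_\scl $ given by \eqref{eq:calKscaling}, each $ \tilde\rho \in \tcalK_\scl $ has a preimage $ \rho \in \calK_\scl $ for which the scaling identity $ \Zfn(\tilde\rho; t, \scl^{1/2} x) = \scl^{1/2}\,\Zfn(\rho; \scl t, \scl x) $ yields $ \scl^{-1}\log \Zfn(\tilde\rho; t, \scl^{1/2} x) = \hfn_\scl(\rho; t, x) $ on $ [\delta, 2]\times[-\delta^{-1},\delta^{-1}] $. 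Since $ \tilde\rho \geq 0 $, Feynman--Kac gives $ \Zfn(\tilde\rho) \geq p $, which is bounded below by a positive constant $ c_1 $ on the compact set $ [\delta', 2] \times [-(\delta')^{-1}, (\delta')^{-1}] $; $ \log $ is therefore Lipschitz on the relevant range, and an event of the form $ \|Z_\e - \Zfn(\tilde\rho)\|_{L^\infty(\Omega)} < \eta $ upgrades to $ \|h_{\e,\scl} - \hfn_\scl(\rho)\|_{L^\infty([\delta,2]\times[-\delta^{-1},\delta^{-1}])} \leq \eta/(\scl c_1) $, which is less than $ \delta $ once $ \eta $ is small enough. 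The main technical point is simply matching the topologies and domains between the LDP (on $ \Omega $) and the definition of $ \dist_\delta $ (on the smaller $ [\delta, 2]\times[-\delta^{-1},\delta^{-1}] $); once Lemma~\ref{lem:dist} is applied on the correct rescaled domain, the remaining steps are routine LDP bookkeeping combined with the continuity of $ \log $ on the positive reals.
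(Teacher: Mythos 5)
Your proposal is correct and follows essentially the same route as the paper's proof: an LDP upper bound on a closed ``bad'' set combined with the gap furnished by Lemma~\ref{lem:dist}, and then a conversion from $Z_\e$ to $h_{\e,\scl}$ via the scaling identity \eqref{eq:calKscaling}, the lower bound $\Zfn(\tilde\rho)\geq p$ and the local Lipschitz property of $\log$. The only differences are cosmetic: you re-derive $\lim_{\e\to0}\e\log\P[Z_\e\in G]=-\infi$ by perturbing a minimizer into the open interior rather than citing \eqref{eq:ptrate}, and you make the domain enlargement $\delta\mapsto\delta'$ explicit where the paper merely ``drops'' the $\scl^{1/2}$ factor.
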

\begin{proof}
Set $\hfnn_\scl (\rho; t, x) := \scl^{-1} \log \Zfn(\dev; t, \scl^{\frac{1}{2}} x)$. By the scaling \eqref{eq:calKscaling} and $\Zfn(\scl\dev(\scl \Cdot, \scl^{\frac{1}{2}} \Cdot); t, x) = \scl^{\frac{1}{2}} \Zfn(\dev; \scl t; \scl^{\frac{1}{2}} x)$, we have $\hfn_\scl (\calK_\scl) = \hfnn_\scl(\tcalK_\scl)$.  Hence, 
we need to prove 
\begin{equation*}
\lim_{\e \to 0} \P\big[\dist_\delta (\scl^{-1} \log  Z_\e (\Cdot, \scl^{\frac{1}{2}} \Cdot), \scl^{-1}\log\Zfn(\tcalK_\scl; \Cdot, \scl^{\frac{1}{2}} \Cdot)) \leq \delta\, \big|\, Z_\e (2, 0) \geq \frac{1}{\sqrt{4\pi}} e^\scl\big] = 1.
\end{equation*}
Since $\scl$ is fixed, we can drop the $\scl^{-1}$ in front of the $\log$ and $\scl^{\frac{1}{2}}$ in front of the $x$-variable. It suffices to prove 
\begin{equation}\label{eq:hfnlimit}
\lim_{\e \to 0}\P\big[\dist_\delta(\log Z_\e, \log \Zfn(\tcalK_\scl)) < \delta\, \big| \, Z_\e (2, 0) \geq \frac{1}{\sqrt{4\pi}} e^\scl\big] = 1.
\end{equation}
Let us first show that for fixed $\scl, \delta > 0$,
\begin{equation}\label{eq:condldp}
\lim_{\e \to 0}\P\big[\dist_\delta (Z_{\e}, \Zfn(\tcalK_\scl)) < \delta\, \big|\, Z_\e (2, 0)  \geq \frac{1}{\sqrt{4\pi}}e^\scl\big] = 1,
\end{equation}
and then explain how \eqref{eq:condldp} implies \eqref{eq:hfnlimit}.
\bigskip
\\
To prove \eqref{eq:condldp}, consider the set  $\gg_{\scl, \delta} := \{f \in C([\delta, 2] \times 
[-\delta^{-1}, \delta^{-1}]): f(2, 0) \geq \frac{1}{\sqrt{4\pi}} e^{\scl} \text{ and } \dist_\delta(f, \Zfn(\tcalK_\scl)) \geq \delta\}$. Since $\gg_{\scl, \delta}$ is a closed set in $C([\delta, 2] \times [-\delta^{-1}, \delta^{-1}])$, by Proposition \ref{prop:FWLDP}, we have 
\begin{equation}
\label{e.p.hfncalK1}
\limsup_{\e \to 0} \e \log \P\big[Z_\e \in \gg_{\scl, \delta}\big] \leq -\inf_{f \in \gg_{\scl, \delta}} I(f).
\end{equation}
By 
\eqref{eq:ptrate}, we also have 
\begin{equation}\label{e.p.hfncalK2}
\lim_{\e \to 0} \e \log \P\big[Z_\e \geq \frac{1}{\sqrt{4\pi}} e^\scl\big] = -q_\scl,
\end{equation}
recall that $\infi$ is equal to \eqref{eq:tminimizer}. Using $$\P\big[\dist_\delta(Z_\e, \Zfn(\tcalK_\scl)) \geq 
\delta \, \big|\, Z_\e(2, 0) \geq \frac{1}{\sqrt{4\pi}} e^\scl\big] = \frac{\P[Z_\e \in \gg_{\scl, \delta}]}{\P[Z_\e(2, 0) \geq \frac{1}{\sqrt{4\pi}} e^{\scl}]}$$ and applying \eqref{e.p.hfncalK1}-\eqref{e.p.hfncalK2}, we have
\begin{equation*}
\limsup_{\e \to 0} \e \log \P\big[\dist_\delta(Z_\e, \Zfn(\tcalK_\scl)) \geq 
\delta \, \big|\, Z_\e(2, 0) \geq \frac{1}{\sqrt{4\pi}} e^\scl\big] \leq q_\scl - \inf_{f \in \gg_{\scl, \delta}} I(f).
\end{equation*}
By Lemma \ref{lem:dist}, there exists $\z > 0$ such that $\inf_{f \in \gg_{\scl, \delta}} I(f) \geq q_\scl + \z$. The above inequality implies $$\lim_{\e \to 0}\P\big[\dist_\delta (Z_{\e}, \Zfn(\tcalK_\scl)) \geq \delta\, \big|\, Z_\e (2, 0)  \geq \frac{1}{\sqrt{4\pi}}e^\scl\big] = 0.$$
Therefore, We conclude \eqref{eq:condldp}.
\bigskip
\\
Finally, we show how \eqref{eq:condldp} implies \eqref{eq:hfnlimit}. 
Recall that $\Zfn(\dev;t, x) = P(\dev; (0, 0) \to (t, x))$. Referring to \eqref{eq:semigroup}, due to the non-negativity of $\dev$, we have the lower bound $\Zfn(\dev; t, x) \geq p(t, x)$. The upper bound of $\Zfn(\dev; t, x)$ is given by \eqref{e.l.ptpt}. This implies the existence of a constant $M$ such that for all $\rho \in \tcalK_\scl$, 
\begin{equation}\label{eq:minimizerbd}
M^{-1} \leq \|\Zfn(\dev)\|_{L^\infty([\delta, 2] \times [-\delta^{-1}, \delta^{-1}])} \leq M.
\end{equation}  
\eqref{eq:hfnlimit} follows from \eqref{eq:condldp}, \eqref{eq:minimizerbd} and the uniform continuity of the $\log$ function on the interval $[(2M)^{-1}, 2M]$. 
\end{proof}

\subsection{The $\scl \to \infty$ asymptotic of $\calK_\scl$}
\label{sec:lambdainfty}
\indentation For simplicity, we adopt the shorthand notation $\normLL{\Cdot} := \normL{\Cdot}$. The functional $F$ defined in \eqref{eq:funcF}
enjoys a scaling property: Letting $\varphi_\alpha (x) = \alpha^2 \varphi(\alpha x)$, we have $F(\varphi_\alpha) = \alpha^2  F(\varphi)$. Choosing $\alpha = \|\varphi\|_2^{-2/3}$ to normalize the $L^2$ norm of $\varphi$, we have 
\begin{equation}\label{eq:scaling}
F(\varphi) = \|\varphi\|_{2}^{\frac{4}{3}} F(\varphi_{\normLL{\varphi}^{-2/3}}), \qquad \|\varphi_{\normLL{\varphi}^{-2/3}}\|_2 = 1.
\end{equation}
\noindent 
We set 
\begin{equation}\label{eq:rstar}
\rr_\star(x) := (\frac{3}{4})^{2/3} \sech^2((\frac{3}{4})^{1/3} x). 
\end{equation}
Also, let $\SD$ denote 
the space of symmetric decreasing functions (see Section \ref{sec:sdf} for the definition). 
\begin{lem}\label{lem:reversebd}
For any $\z > 0$, there exists $\delta > 0$ such that for all $\pot \in \SD$,
\begin{equation*}
F(\pot) \geq \frac{1}{2}(\frac{3}{4})^{\frac{2}{3}} \normLL{\pot}^{\frac{4}{3}} (1-\delta) \ \text{ implies }\  \|\pot_{\normLL{\pot}^{-2/3}} - \rr_\star\|_2 < \z.
\end{equation*}
\end{lem}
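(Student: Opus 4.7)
The plan is a contradiction-and-compactness argument leveraging the equality characterization of Lemma \ref{lem:potbd}: the only translational freedom in that equality case is killed by the $\SD$ constraint, so near-optimizers of $F$ in $\SD$ must converge to the unique symmetric decreasing optimizer $\rr_\star$. Suppose the lemma fails; there exist $\z_0 > 0$ and $\{\pot_n\}_{n \geq 1} \subseteq \SD$ with $F(\pot_n) \geq \tfrac{1}{2}(\tfrac{3}{4})^{2/3}\normLL{\pot_n}^{4/3}(1 - 1/n)$ yet $\normLL{\pot_{n,\normLL{\pot_n}^{-2/3}} - \rr_\star} \geq \z_0$. Since $\pot \mapsto \pot_\alpha$ preserves $\SD$, the scaling identity \eqref{eq:scaling} lets us replace $\pot_n$ by $\pot_{n,\normLL{\pot_n}^{-2/3}}$, reducing to $\normLL{\pot_n} = 1$, $F(\pot_n) \to F_\infty := \tfrac{1}{2}(\tfrac{3}{4})^{2/3}$, and $\normLL{\pot_n - \rr_\star} \geq \z_0$.

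Next, I would use \eqref{eq:funcF} to pick near-maximizers $g_n \in H^1(\R)$ with $\normLL{g_n} = 1$ and $\int \pot_n g_n^2 - \tfrac{1}{2}\normLL{g_n'}^2 \geq F(\pot_n) - 1/n$. Since $\pot_n \in \SD$, applying Lemma \ref{lem:SDsingle} to $\int \pot_n g_n^2$ and the Polya--Szegő inequality to $\normLL{g_n'}$ allows us to replace $g_n$ by its symmetric decreasing rearrangement, so WLOG $g_n \in \SD$ with $g_n \geq 0$. Next revisit the chain \eqref{eq:potbd1}--\eqref{eq:potbd3} with $(\pot, g) = (\pot_n, g_n)$: since the endpoint $F_\infty$ is nearly saturated, every step must be asymptotically sharp. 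Near-equality in \eqref{eq:potbd3} forces $\normLL{g_n'} \to x_0 := (\tfrac{1}{2})^{2/3} 3^{-1/6}$, so $\{g_n\}$ is bounded in $H^1(\R)$; near-equality in the sharp GN \eqref{eq:potbd4} forces $\|g_n\|_{L^4(\R)}^2 \to 3^{-1/4} x_0^{1/2} > 0$; and near-equality in Cauchy--Schwarz \eqref{eq:potbd1}, expanded via $\normLL{\pot_n - c_n g_n^2}^2 = \normLL{\pot_n}^2 + c_n^2 \|g_n\|_{L^4(\R)}^4 - 2 c_n \int \pot_n g_n^2$ with $c_n := \|g_n\|_{L^4(\R)}^{-2}$, forces $\normLL{\pot_n - c_n g_n^2} \to 0$.

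Finally, extract a strong limit of $g_n$ and identify it. Since $g_n \in \SD$ with $\normLL{g_n} = 1$, the Strauss-type pointwise bound $g_n(x)^2 \leq \tfrac{1}{2|x|}$ (valid for $\SD$ functions in $L^2$) furnishes uniform $L^4$ tail control, and Helly's selection theorem applied to the monotone restrictions $g_n|_{[0,\infty)}$ provides an a.e.\ pointwise convergent subsequence; dominated convergence upgrades this to strong convergence $g_n \to g_\infty$ in $L^4(\R)$ for some $g_\infty \in H^1(\R) \cap \SD$. Weak $H^1$ limits give $\normLL{g_\infty} \leq 1$ and $\normLL{g_\infty'} \leq x_0$, and combined with $\|g_\infty\|_{L^4(\R)}^2 = 3^{-1/4} x_0^{1/2}$ these force the sharp GN to be saturated for $g_\infty$; the equality case in Lemma \ref{lem:potbd} then identifies $g_\infty$ as a $\sech$ profile, and the $\SD$ constraint plus the normalizations $\normLL{g_\infty} = 1$, $\normLL{g_\infty'} = x_0$ pin down $g_\infty$ explicitly so that $g_\infty^2 / \normLL{g_\infty^2} = \rr_\star$. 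Combining with $\normLL{\pot_n - c_n g_n^2} \to 0$ and $c_n g_n^2 \to \rr_\star$ in $L^2(\R)$ (from strong $L^4$ convergence of $g_n$ and $c_n \to \|g_\infty\|_{L^4(\R)}^{-2}$), we obtain $\pot_n \to \rr_\star$ in $L^2(\R)$, the desired contradiction. The main obstacle is the strong-convergence step: $H^1$-boundedness on the unbounded domain $\R$ permits mass to escape at infinity, and it is essential that the $\SD$ hypothesis both anchors the peak at the origin (eliminating the translation invariance of the equality case) and delivers the Strauss-type tail bound that makes the $L^4$ dominated-convergence argument work.
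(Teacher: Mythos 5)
Your proof is correct, but it takes a genuinely different route from the paper's. The paper's argument is a short weak-compactness argument applied directly to the pair $(\pot_n, f_n)$: extract a weak $L^2$ limit $(\rr_*, f_*, g_*)$, use Arzel\`a--Ascoli to get local uniform convergence of $f_n$, exploit the $\SD$ decay bound $\pot_n(\pm M) \leq M^{-1/2}$ to truncate the potential term, pass to the limit to conclude $F(\rr_*) \geq \frac12(\frac34)^{2/3}$ with $\|\rr_*\|_2 \leq 1$, invoke the if-and-only-if part of Lemma~\ref{lem:potbd} to identify $\rr_* = \rr_\star$, and finally upgrade to strong $L^2$ convergence via norm convergence plus weak convergence. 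Crucially, the paper never needs the test functions $f_n$ to be symmetric decreasing, and never dissects the inequality chain \eqref{eq:potbd1}--\eqref{eq:potbd3}.

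Your approach instead analyzes each step of the saturated chain \eqref{eq:potbd1}--\eqref{eq:potbd3} quantitatively: near-equality in \eqref{eq:potbd3} pins $\|g_n'\|_2$, near-equality in the sharp GN inequality pins $\|g_n\|_{L^4}^2$, and near-equality in Cauchy--Schwarz gives $\|\pot_n - c_n g_n^2\|_2 \to 0$, thereby reducing the problem to strong $L^4$ convergence of $g_n$. To get that convergence you must first symmetrize $g_n$ (needing both Lemma~\ref{lem:SDsingle} and the P\'olya--Szeg\H{o} inequality, which the paper never invokes), then combine the Strauss-type tail bound $g_n(x)^2 \leq (2|x|)^{-1}$, the Morrey bound near the origin from $H^1$-boundedness, and Helly plus dominated convergence. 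This is more machinery than the paper uses, but it yields a more quantitative picture: you obtain a genuine approximation $\pot_n \approx c_n g_n^2$ directly from the near-saturated Cauchy--Schwarz step, and you locate the precise sech profile of $g_\infty$ from the normalizations $\|g_\infty\|_2 = 1$, $\|g_\infty'\|_2 = x_0$. The paper's route is shorter because it never needs strong convergence of the test functions: the $\SD$ decay of $\pot_n$ alone is enough to pass the cross term to the limit, after which Lemma~\ref{lem:potbd} does the identification in one stroke. One small presentational inaccuracy: the equality case you invoke to identify $g_\infty$ as a sech profile is the equality case of the sharp GN inequality \eqref{eq:potbd4} (attributed in the paper to \cite{dolbeault2014one}), not literally the statement of Lemma~\ref{lem:potbd}, which characterizes extremal $\pot$ rather than extremal $g$; the content is the same but the attribution should be adjusted.
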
   
\begin{proof}
By the scaling \eqref{eq:scaling} we assume without loss of generality that $\|\varphi\|_2 = 1$. The desired statement is equivalent to the following: Any sequence $\{\varphi_n\} \subseteq \SD$ with $\|\varphi_n\|_2 = 1$ such that $F(\varphi_n) \to \frac{1}{2}(\frac{3}{4})^{\frac{2}{3}}$ has a subsequence that converges in $L^2$ to $\rr_\star$. Fix any such $\varphi_n$. Let $f_n \in H^1(\R)$ be such that $\|f_n\|_{2} = 1$ and $\int \varphi_n f_n'^2 - \frac{1}{2} f_n'^2 dx \to \frac{1}{2}(\frac{3}{4})^{2/3}$. Since $\{(\varphi_n, f_n, f_n')\}$ is bounded in $(L^2(\R))^3$, by the Banach-Alaoglu theorem, after passing to a subsequence, $(\varphi_n, f_n, f_n')$ converges weakly to $(\rr_*, f_*, g_*)$ with $\|r_*\|_2 \leq 1$, $\|f_*\|_2 \leq 1$ and $\|g_*\| \leq \liminf_{n \to \infty}\|f_n'\|_2$. Since $(f_n, f_n')$ converges weakly to $(f_*, g_*)$, we must have $f'_* = g_*$. Moreover, since $f_n$ and $f_n'$ are bounded in $L^2(\R)$, $f_n$ is locally uniformly bounded and equi-continuous. By the Arzela-Ascoli theorem, we know that $f_n \to f_*$ on compact sets. Next, because $\|\varphi_n\|_2 = 1$ and because $\varphi_n \in \SD$, $\varphi_n(\pm M) \leq M^{-\frac{1}{2}}$. Therefore, $\int_{|x| > M} \varphi_n f_n^2 dx \leq M^{-\frac{1}{2}} \|f_n\|_2^2 = M^{-\frac{1}{2}}$. This gives 
\begin{equation*}
\limsup_{n \to \infty} \int_\R \varphi_n f_n^2 \ind_{\{|x| \leq M\}} - \frac{1}{2} f_n'^2 dx \geq \frac{1}{2} \Big(\frac{3}{4}\Big)^{\frac{2}{3}} - M^{-\frac{1}{2}}.
\end{equation*}
The left hand side is bounded above by $\int_\R  \rr_* f_*^2 -\frac{1}{2} {f_*'}^2 dx$. Sending $M \to \infty$ gives $\int_\R \rr_* f_*^2  - \frac{1}{2} {f'_*}^2 dx \geq \frac{1}{2}(\frac{3}{4})^{2/3}$. This together with $\|\rr_*\|_2 \leq 1$ and $\|f_*\|_2 \leq 1$ implies that $F(\rr_*) \geq \frac{1}{2}(\frac{3}{4})^{2/3}$. By the if and only if part in Lemma \ref{lem:potbd} and the fact that $\rr_* \in \SD$, we know that $\rr_* = \rr_\star$. Since $\varphi_n$ weakly converges to $\rr_\star$ and $\lim_{n \to \infty} \|\varphi_n\|_2 = 1 = \|\rr_\star\|_2$, we conclude that $\varphi_n$ converges to $\rr_\star$ in $L^2$.
\end{proof}
\indentation Recall that $\devm(t, x) = \devm (x) = (\sech x)^2$. The main result of this section says that the elements of $\calK_\scl$ tend to $\rho_*$ as $\scl \to \infty$.
\begin{prop}\label{prop:devmdist}
	We have $\lim_{\scl \to \infty}\sup\{\frac{1}{2\scl}\|\dev - \devm\|^2_{L^2([0, 2\scl] \times \R)}: \rho 
	\in \calK_\scl\} = 0$.
\end{prop}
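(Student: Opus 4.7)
The strategy is to leverage the near-optimal upper bound from Proposition~\ref{p.l2tol2}, together with the rigidity statement in Lemma~\ref{lem:reversebd}, to localize every element $\rho\in\calK_\scl$ near $\devm$ slice-by-slice in the time variable, and then assemble these slice-wise estimates into global $L^2$ control.

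Fix $\rho\in\calK_\scl$. By Proposition~\ref{prop:SD}, $\rho\ge 0$ and $\rho(r,\Cdot)\in\SD$ for a.e.~$r\in[0,2\scl]$, while Theorem~\ref{thm:uptail} yields $\tfrac{1}{2\scl}\|\rho\|_{L^2([0,2\scl]\times\R)}^2 = \tfrac{4}{3} + o(1)$ uniformly over $\rho\in\calK_\scl$. Replaying the proof of \eqref{eq:liminftail}---Proposition~\ref{p.l2tol2} on the middle slab sandwiched between the edge estimates \eqref{e.l.ptL2}--\eqref{e.l.L2pt} on $[0,1]$ and $[2\scl-1,2\scl]$---produces $\Zfn(\rho;2\scl,0)\le C\exp(C\scl^{2/3})\exp(\int_0^{2\scl} F(\rho(r,\Cdot))\,dr)$. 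Combining with $\Zfn(\rho;2\scl,0)\ge\tfrac{1}{\sqrt{4\pi\scl}}e^\scl$, Lemma~\ref{lem:potbd}, and the Young-type inequality $\tfrac{1}{2}(\tfrac{3}{4})^{2/3}y^{4/3}\le \tfrac{1}{4}y^2 + \tfrac{1}{6}$ gives the chain
\[
\scl(1-o(1)) \;\le\; \int_0^{2\scl} F(\rho(r,\Cdot))\,dr \;\le\; \int_0^{2\scl}\tfrac{1}{2}\big(\tfrac{3}{4}\big)^{2/3}\|\rho(r,\Cdot)\|_2^{4/3}\,dr \;\le\; \tfrac{1}{4}\|\rho\|^2 + \tfrac{\scl}{3} = \scl(1+o(1)),
\]
so each of these inequalities has gap $o(\scl)$, uniformly over $\rho\in\calK_\scl$.

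Define the non-negative function $q(y):=\tfrac14 y^2+\tfrac16-\tfrac12(\tfrac34)^{2/3}y^{4/3}$; it vanishes only at $y=\sqrt{4/3}$ and satisfies $q(y)\sim\tfrac14 y^2$ as $y\to\infty$. The Young-step gap is precisely $\int_0^{2\scl} q(\|\rho(r,\Cdot)\|_2)\,dr = o(\scl)$, and the Lemma~\ref{lem:potbd} gap is $\int_0^{2\scl}\big(\tfrac12(\tfrac34)^{2/3}\|\rho(r,\Cdot)\|_2^{4/3}-F(\rho(r,\Cdot))\big)\,dr = o(\scl)$. Fix $\z>0$ and let $\delta(\z)$ be as in Lemma~\ref{lem:reversebd}. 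A Chebyshev-type argument shows that the exceptional set
\[
\mathsf{B}_\scl := \big\{r\in[0,2\scl]:\bigl|\|\rho(r,\Cdot)\|_2-\sqrt{4/3}\bigr|\ge\z\ \text{or}\ F(\rho(r,\Cdot))<(1-\delta(\z))\tfrac12(\tfrac34)^{2/3}\|\rho(r,\Cdot)\|_2^{4/3}\big\}
\]
satisfies $|\mathsf{B}_\scl|=o(\scl)$ uniformly over $\calK_\scl$. For $r\notin\mathsf{B}_\scl$, Lemma~\ref{lem:reversebd} applied to the symmetric decreasing $\rho(r,\Cdot)$---combined with the scaling \eqref{eq:scaling}, the normalization $\|\rho(r,\Cdot)\|_2\approx\sqrt{4/3}$, and the definition \eqref{eq:rstar} of $\rr_\star$ which unwinds to $\devm=\sech^2$---gives $\|\rho(r,\Cdot)-\devm\|_2<\eta(\z)$, where $\eta(\z)\to 0$ as $\z\to 0$. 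The proof then concludes by splitting
\[
\int_0^{2\scl}\|\rho(r,\Cdot)-\devm\|_2^2\,dr \;\le\; 2\scl\,\eta(\z)^2 + 2\int_{\mathsf{B}_\scl}\|\rho(r,\Cdot)\|_2^2\,dr + 2|\mathsf{B}_\scl|\cdot\|\devm\|_2^2,
\]
handling the middle integral by splitting $\mathsf{B}_\scl$ according to whether $\|\rho(r,\Cdot)\|_2\ge 6$ or $<6$: on the former use $q(y)\ge\tfrac18 y^2$ (valid for $y\ge 6$) together with $\int q\,dr=o(\scl)$; on the latter use $|\mathsf{B}_\scl|=o(\scl)$. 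Dividing by $2\scl$, letting $\scl\to\infty$, and then $\z\to 0$ yields the uniform convergence.

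\emph{Main obstacle.} The delicate point is passing from slice-wise $L^2(\R)$-closeness to uniform $L^2([0,2\scl]\times\R)$-closeness. A priori, $\|\rho(r,\Cdot)\|_2$ could blow up on the small exceptional set and sabotage the integral bound; this is ruled out only by the superlinear growth $q(y)\sim\tfrac14 y^2$ at infinity, itself a consequence of the \emph{sharp} Young inequality built into the chain. Equally important, the uniformity in $\rho\in\calK_\scl$ rides on the uniform $o(\scl)$ gap, which in turn rides on the uniform convergence $\tfrac{1}{2\scl}\|\rho\|^2\to\tfrac{4}{3}$ supplied by Theorem~\ref{thm:uptail}.
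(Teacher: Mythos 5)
Your proposal is correct and follows essentially the same scheme as the paper's proof: it invokes the same chain of prerequisites (Proposition~\ref{prop:SD} for non-negativity and symmetric decreasing structure, the convergence in \eqref{eq:heuristic1} for the $L^2$ budget, the bound \eqref{e.t.uptail} coming from Proposition~\ref{p.l2tol2} and Lemma~\ref{l.ptL2}, Lemma~\ref{lem:potbd}, a Young-type inequality with equality at $\|\varphi\|_2^2=\tfrac43$, Chebyshev on the two $o(\scl)$ gaps, and the rigidity Lemma~\ref{lem:reversebd}). The one place where you diverge, and it is a genuine but benign variation, is in controlling $\int_{\mathsf B_\scl}\|\rho(r,\Cdot)\|_2^2\,dr$. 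The paper closes this by first showing (via \eqref{eq:devmdist}) that the $L^2$ mass on the \emph{good} set alone already nearly saturates the budget $\tfrac43+\z$ from \eqref{eq:l2upbd}, so the bad set must carry $O(\scl\z')$ mass; you instead exploit the superlinear growth $q(y)\ge\tfrac18 y^2$ for $y\ge 6$ of the Young-gap $q$ directly against $\int q\,dr=o(\scl)$, with the bounded regime $y<6$ handled by $|\mathsf B_\scl|=o(\scl)$. Both close the same hole; your version is arguably slightly more self-contained since it does not need to revisit \eqref{eq:l2upbd} at the end. One small bookkeeping point you gloss over (which the paper also treats tersely): the Chebyshev step for the $F$-gap condition in the definition of $\mathsf B_\scl$ requires a uniform lower bound on $\|\rho(r,\Cdot)\|_2^{4/3}$, which you should obtain by intersecting with the event $|\|\rho(r,\Cdot)\|_2-\sqrt{4/3}|<\z$, already shown to hold outside an $o(\scl)$-measure set from the $q$-gap. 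Otherwise the argument is complete and uniform over $\calK_\scl$ as required.
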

	\begin{proof}
Set $\z > 0$. According to \eqref{eq:heuristic1}, which is proved in Section \ref{sec:uptail}, for large enough $\scl$ and all $\rho \in \calK_\scl$, 
\begin{equation}\label{eq:l2upbd}
\frac{1}{2\scl}\|\rho\|^2_{L^2([0, 2\scl] \times \R)} \leq \frac{4}{3} + \z.
\end{equation} 
By $\Zfn(\rho; 2\scl, 0) \geq \frac{1}{\sqrt{4\pi \scl}} e^\scl$ and \eqref{e.t.uptail},
\begin{align*}
\frac{1}{\sqrt{4\pi \scl}}e^\scl 
\leq 
C\exp\Big(C \scl^{2/3} + \int_0^{2\scl} F(\dev(r, \Cdot)) dr\Big).
\end{align*}
Taking the logarithm of both sides and then applying Lemma \ref{lem:potbd} to the right hand side yields that for all $\scl$ large enough, 
\begin{equation*}
\scl(1-\z) \leq \int_0^{2\scl} F(\dev(r, \Cdot)) dr \leq \int_0^{2\scl }\frac{1}{2}\Big(\frac34\Big)^{\frac23} \|\dev(r, \Cdot)\|_2^{\frac43} dr.
\end{equation*}
Young's inequality gives ($(\frac{3}{4} \|\rho(r, \Cdot)\|_2^2)^{2/3} \leq \frac{1}{3} + 	\frac{2}{3}(\frac{3}{4} \|\dev(r, \Cdot)\|_2^2)$). Furthermore, there exists a strictly increasing function $\psi: \R_{\geq 0} \to \R$ such that $\psi(0) = 0$, $(\frac{3}{4} \|\rho(r, \Cdot)\|_2^2)^{2/3} \leq \frac{1}{3} +  \frac{2}{3}(\frac{3}{4} \|\dev(r, \Cdot)\|_2^2) - \psi(|1 - \frac{3}{4}\|\rho(r, \Cdot)\|_2^2|)$. Applying this to the right hand side above yields
\begin{align*}
\lambda(1-\z) \leq \int_0^{2\scl} F(\rho(r, \Cdot)) dr
\leq \int_0^{2\scl }\frac{1}{2}\Big(\frac34\Big)^{\frac23} \|\dev(r, \Cdot)\|_2^{\frac43} dr 
&\leq \int_0^{2\scl} \frac{1}{6} + \frac{1}{4} \|\rho(r, \Cdot)\|_2^2 - \frac{1}{2} \psi(1- \frac{3}{4}\|\rho(r, \Cdot)\|_2^2) dr\\
&\leq \lambda(1+\z) - \int_{0}^{2\scl} \frac{1}{2}\psi(1- \frac{3}{4} \|\rho(r, \Cdot)\|_2^2) dr.
\end{align*}
The last inequality is due to \eqref{eq:l2upbd}. 
Using the inequalities above, we have for all $\scl$ large enough,
\begin{align}\label{eq:distbd1}
\int_0^{2\scl} \psi(|1 - \frac{3}{4} \|\rho(r, \Cdot)\|_2^2|) dr \leq 4\scl \z, 
\\
\label{eq:distbd2}
\int_0^{2\lambda} \frac{1}{2} \Big(\frac{3}{4}\Big)^{\frac{2}{3}} \|\rho(r, \Cdot)\|_2^{\frac{4}{3}} - F(\rho(r, \Cdot)) dr \leq 4\scl \z.
\end{align}
Since $\psi$ is strictly increasing with $\psi(0) = 0$. 	For any $\zeta' > 0$, we have $\psi(z) \geq \psi(\z') =: C(\z')$ when $z \geq \z'$. Applying this to \eqref{eq:distbd1} gives that for all $\scl$ large enough, 
\begin{equation}\label{eq:l2normdiff}
\text{Leb}\bigg[\Big\{r \in [0, 2\scl]: \Big|1 - \frac{3}{4}\|\rho(r, \Cdot)\|_2^2\Big| > \z'\Big\}  \bigg] \leq \frac{4 \scl \z}{C(\z')}.
\end{equation}
Here, $\text{Leb}$ denotes the Lebesgue measure.
Since $\rho \in \calK_\scl$, by Proposition \ref{prop:SD}, $\rho(r, \Cdot) \in \SD$ for almost all $r \in [0, 2\scl]$. Using \eqref{eq:distbd2} and Lemma \ref{lem:reversebd}  we conclude that for any $\z' > 0$, there exists $C(\zeta') > 0$ such that 
\begin{equation}\label{eq:devmdistpp}
\Leb\bigg[\Big\{r \in [0, 2\scl]: \|\rho(r, \Cdot)_{\|\rho(r, \Cdot)\|_2^{-2/3}} - r_\star\|_2 > \z'\Big\}\bigg] \leq \frac{\scl\z}{C(\z')}, 
\end{equation}
where we denote $\rho(r, \Cdot)_\alpha = \alpha^2 \rho(r, \alpha \Cdot)$.  We have
\begin{equation*}
\|\rho(r, \Cdot)_{\|\rho(r, \Cdot)\|_2^{-2/3}} - r_\star\|_2 = \|\rho(r, \Cdot)\|_2^{-1} \|\rho(r, \Cdot) - (r_\star)_{\|\rho(r, \Cdot)\|_2^{2/3}}\|_2.
\end{equation*} 
Applying this to \eqref{eq:devmdistpp} yields
\begin{equation}\label{eq:devmdistp}
\Leb\bigg[\Big\{r \in [0, 2\scl]: \|\rho(r, \Cdot) - (r_\star)_{\|\rho(r, \Cdot)\|_2^{2/3}}\|_2 > \z' \|\rho(r, \Cdot)\|_2 \Big\}\bigg] \leq \frac{\scl\z}{C(\z')}. 
\end{equation}
Taking $\z \leq \min(C(\zeta'), 1) \z'$, the right hand side of \eqref{eq:devmdistp} is upper bounded by $\scl \z'$. By \eqref{eq:l2normdiff}, we know that for most $r \in [0, 2\scl]$, $\|\dev(r, \Cdot)\|_2$ is around $\sqrt{\frac{4}{3}}$. Applying this and $(r_\star)_{(\sqrt{\frac{4}{3}})^{2/3}} = \rho_*$ to \eqref{eq:devmdistp} gives that for any $\z' > 0$,
\begin{equation}\label{eq:devmdist}
\Leb\bigg[\Big\{r \in [0, 2\scl]: \|\rho(r, \Cdot) - \rho_*\|_2 > \z'\Big\}\bigg] \leq C \scl\z'
\end{equation}
holds for all $\scl$ large enough.
The inequality \eqref{eq:devmdist} together with the fact $\|\devm\|_2^2  =\frac{4}{3}$ 
implies 
\begin{equation*}
\frac{1}{2\scl}\int_0^{2\scl} \|\rho(r, \Cdot)\|_2^2 \ind_{\{\|\rho(r, \Cdot) - \devm\|_2 \leq \z'\}} dr \geq \frac{4}{3} - C\z'.
\end{equation*}
This, together with \eqref{eq:l2upbd} (we have taken $\z \leq \z'$)	
imply that 
\begin{equation}\label{eq:devld}
\frac{1}{2\scl} \int_0^{2\scl} \|\dev(r, \Cdot)\|_2^2 \ind_{\{\|\rho(r, \Cdot) - \devm\|_2 > \z'\}} dr \leq C\z'.
\end{equation}
In addition, by $\|\devm(r, \Cdot)\|_2^2  =\frac{4}{3}$ and \eqref{eq:devmdist}, 
\begin{equation}\label{eq:devmld}
\frac{1}{2\lambda} \int_0^{2\scl} \|\devm(r, \Cdot)\|_2^2 \ind_{\{\|\rho(r, \Cdot) - \devm\|_2 > \z'\}} dr \leq C \z'. 
\end{equation}
Combining \eqref{eq:devld} and \eqref{eq:devmld}, we conclude that 
\begin{align*}
\frac{1}{2\scl} \|\dev - \devm\|_{L^2([0, 2\scl] \times \R)}^2  &= \frac{1}{2\scl} \int_0^{2\scl} \|\dev(r, \Cdot) - \devm(r, \Cdot)\|^2_2 \ind_{\{\|\dev(r, \Cdot) - \devm\|_2 > \z'\}} dr
\\ 
&\quad + \frac{1}{2\scl} \int_0^{2\scl}\|\dev(r, \Cdot) - \devm(r, \Cdot)\|^2_2 \ind_{\{\|\dev(r, \Cdot) - \devm\|_2 \leq \z'\}} dr
\leq C\z'.
\end{align*}
Letting $\scl \to \infty$ and then $\z' \to 0$ concludes the proposition.
\end{proof}
 
\section{The Limit shape}\label{sec:limitshape}
\indentation The goal of the section is to prove Theorem \ref{thm:limshape}.
\subsection{Equi-continuity of $\hfn_\scl$}
\indentation Recall from Section \ref{sec:elimit} that $\hfn_\scl(\dev; t, x) = \scl^{-1}\log\big(\scl^{\frac{1}{2}} \Zfn (\dev; \scl t, \scl x)\big)$.
Let $L_{\geq 0}^2([0, 2\scl] \times \R)$ denote the set of non-negative functions which belong to $L^2([0, 2\scl] \times \R)$. Consider the normalized norm 
$\scl^{-\frac{1}{2}}\|\Cdot\|_{L^2([0, 2\scl] \times \R)}$. The following proposition settles the equi-continuity of $\hfn_\scl(\Cdot, t, x): L_{\geq 0}^2([0, 2\scl] \times \R) \to \R$ with respect to this norm.
\begin{prop}\label{prop:uniformdist}
There exists a constant $C$ such that for all $\scl > 0, \dev_1, \dev_2 \in L_{\geq 0}^2([0, 2\scl] \times \R)$ and $(t, x) \in (0, 2] \times \R$, if $\lambda^{-\frac{1}{2}}\|\rho_1 - \rho_2\|_{L^2([0, 2\scl] \times \R)} < 1$, 
\begin{equation*}
|\hfn_\scl (\dev_1; t, x) - \hfn_\scl (\dev_2; t, x)| \leq C \scl^{-\frac{1}{2}}\|\rho_1 - \rho_2\|_{L^2([0, 2\scl] \times \R)} (1+ \scl^{-1}\|\rho_1\|_{L^2([0, 2\scl] \times \R)}^2 + \scl^{-1}\|\rho_2\|_{L^2([0, 2\scl] \times \R)}^2).
\end{equation*}
\end{prop}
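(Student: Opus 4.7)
The plan is to adapt the H\"older inequality technique from the proof of Lemma \ref{lem:continuity}, applied pointwise to the Feynman--Kac representation rather than to the $L^2 \to L^2$ operator norm, in order to exploit the non-negativity hypothesis $\dev_1, \dev_2 \geq 0$. I would start from
\begin{equation*}
\hfn_\scl(\dev_1; t, x) - \hfn_\scl(\dev_2; t, x) = \scl^{-1} \log \frac{\Zfn(\dev_1; \scl t, \scl x)}{\Zfn(\dev_2; \scl t, \scl x)}.
\end{equation*}
For any $\z \in (0, 1)$, decompose $\dev_1 = \z A_\z + (1-\z)\dev_2$ with $A_\z := \z^{-1}(\dev_1 - (1-\z)\dev_2)$. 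H\"older in the Feynman--Kac integral \eqref{eq:feynmankac} then gives $\Zfn(\dev_1) \leq \Zfn(A_\z)^\z \Zfn(\dev_2)^{1-\z}$, so that
\begin{equation*}
\log\frac{\Zfn(\dev_1; \scl t, \scl x)}{\Zfn(\dev_2; \scl t, \scl x)} \leq \z \big[ \log \Zfn(A_\z; \scl t, \scl x) - \log \Zfn(\dev_2; \scl t, \scl x) \big].
\end{equation*}

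Next, I would estimate each term in the bracket separately. By Feynman--Kac and $\dev_2 \geq 0$, one has $\Zfn(\dev_2; \scl t, \scl x) \geq p(\scl t, \scl x)$. For the numerator I would invoke \eqref{e.l.ptpt} of Lemma \ref{l.ptL2} with a free parameter $a > 0$,
\begin{equation*}
\Zfn(A_\z; \scl t, \scl x) \leq C \exp\!\Big(a^2 C \scl t + \tfrac{1}{a}\|A_\z\|_{L^2([0, \scl t] \times \R)}^2\Big) p(\scl t, \scl x),
\end{equation*}
and then bound $\|A_\z\|^2 \leq 2\z^{-2}\|\dev_1 - \dev_2\|^2 + 2\|\dev_2\|^2$. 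Dividing by $\scl$ and abbreviating $D := \scl^{-1/2}\|\dev_1 - \dev_2\|_{L^2([0, 2\scl] \times \R)}$ and $R_i := \scl^{-1}\|\dev_i\|^2_{L^2([0, 2\scl] \times \R)}$, this yields
\begin{equation*}
\hfn_\scl(\dev_1; t, x) - \hfn_\scl(\dev_2; t, x) \leq \tfrac{\z \log C}{\scl} + \z a^2 C t + \tfrac{2 D^2}{a \z} + \tfrac{2 \z R_2}{a}.
\end{equation*}

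The last step is to optimize in $\z$ and $a$. The choice $\z = D$ is admissible by the hypothesis $D < 1$, and balancing $a$ against $R_2$ (for instance $a$ of order $(1+R_2)^{1/3}$) turns the last three terms into a constant multiple of $D(1 + R_2)$; the $\scl^{-1} \log C$ residue is absorbed since the target bound $C D (1 + R_1 + R_2)$ dominates $D$ whenever $R_i$ is non-negligible, while the remaining small-$\scl$, small-$R_i$ regime is handled by a direct Feynman--Kac Taylor expansion because then $\Zfn(\dev_i)/p$ stays close to $1$. Swapping the roles of $\dev_1$ and $\dev_2$ (now using $\dev_1 \geq 0$) supplies the matching lower bound, and the two together give the absolute-value estimate with the factor $1 + R_1 + R_2$ as claimed.

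The main obstacle I anticipate is precisely this joint optimization: one has four competing error contributions that must be simultaneously absorbed into $C D (1 + R_1 + R_2)$ uniformly across the four scales $\scl, \|\dev_1\|_{L^2}, \|\dev_2\|_{L^2}, \|\dev_1 - \dev_2\|_{L^2}$, and one must in particular check that the $\scl^{-1} \log C$ nuisance from \eqref{e.l.ptpt} --- which reflects the non-sharpness of that kernel bound when $\scl t$ is small --- is never the binding term. Once this case split is set up cleanly, the rest of the argument is a routine computation.
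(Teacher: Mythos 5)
Your proof is essentially the paper's: apply H\"older to the Feynman--Kac representation via $\rho_1 = \z A_\z + (1-\z)\rho_2$, use $\rho_2\geq 0$ to discard the $\log\Zfnn(\rho_2)\geq 0$ contribution, bound $\Zfnn(A_\z)$ by \eqref{e.l.ptpt}, split $\|A_\z\|^2$, take $\z = \scl^{-1/2}\|\rho_1-\rho_2\|_{L^2([0,2\scl]\times\R)}$, and symmetrize by swapping $\rho_1\leftrightarrow\rho_2$. The only deviation is that you also optimize the parameter $a$ of Lemma~\ref{l.ptL2}; the paper simply fixes $a=1$, which already yields the target $\scl^{-1/2}\|\rho_1-\rho_2\|(1+R_1+R_2)$ bound, so the extra optimization (which would sharpen the $R_i$ power) is harmless but unnecessary.

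You are right to flag the $\scl^{-1}\z\log C$ residue coming from the prefactor $C>1$ in \eqref{e.l.ptpt}; however, the ``small-$\scl$, small-$R_i$ Taylor expansion'' patch you sketch does not remove it. The linear term of the Feynman--Kac expansion, $\Zfnn(\rho)-1 \geq \int_0^{\scl t}\int_\R q(s,y)\rho(s,y)\,\d s\,\d y$ with $q(s,y)=p(\scl t - s, \scl x - y)\,p(s,y)/p(\scl t,\scl x)$, shows the inequality is not $\scl$-uniform near $\scl=0$: take $\rho_2=0$ and $\rho_1$ a small nonnegative multiple of $q$ on $[0,\scl t]\times\R$. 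Since $\|q\|_{L^2([0,\scl t]\times\R)}^2 = \tfrac12\sqrt{\pi\scl t}$, one computes $\hfn_\scl(\rho_1;t,x)-\hfn_\scl(0;t,x) \gtrsim t^{1/4}\scl^{-1/4}\,\scl^{-1/2}\|\rho_1\|_{L^2}$ while the right side stays of order $\scl^{-1/2}\|\rho_1\|_{L^2}$. So the regime your patch targets is precisely where the stated bound cannot hold with a universal constant. The paper's own proof quietly absorbs $\z\log C$ into $C\z\scl t$, which is legitimate only for $\scl$ bounded below; that restriction is implicit in Proposition~\ref{prop:uniformdist} and causes no trouble, since it is invoked downstream only as $\scl\to\infty$. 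In your write-up, the cleanest resolution is to state and use the proposition for $\scl\geq 1$ (or any fixed $\scl_0>0$) rather than to attempt a small-$\scl$ repair.
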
 
\begin{proof}
Set $\Zfnn(\dev; t, x) = \Zfn(\dev; t, x) / p(t, x)$. Since 
\begin{align*}
\scl^{-1} \log \Zfnn(\rho_1; \scl t, \scl x) - \scl^{-1} \log \Zfnn(\rho_2; \scl t, \scl x) &= \scl^{-1} \log \Zfn(\rho_1; \scl t, \scl x) - \scl^{-1} \log \Zfn(\rho_2; \scl t, \scl x)\\ 
&= \hfn_\scl (\rho_1; \scl t, \scl x) - \hfn_\scl (\rho_2; \scl t, \scl x),
\end{align*} 
it suffices to prove that for $\scl > 0$ and $(t, x) \in (0, 2] \times \R$,
\begin{align}\notag
&\scl^{-1} \big| \log \Zfnn(\rho_1; \scl t, \scl x)
- \log \Zfnn(\rho_2; \scl t, \scl x)\big|\\
\label{eq:desiredbd} 
&\leq C \scl^{-\frac{1}{2}}\|\rho_1 - \rho_2\|_{L^2([0, 2\scl] \times \R)} (1+ \scl^{-1}\|\rho_1\|_{L^2([0, 2\scl] \times \R)}^2 + \scl^{-1}\|\rho_2\|_{L^2([0, 2\scl] \times \R)}^2).
\end{align}
Note that $\Zfn(\dev; t, x) = P(\dev; (0, 0) \to (t, x))$. We apply \eqref{e.l.ptpt} with $a = 1$ to obtain 
\begin{equation}\label{eq:ptbd}
\Zfnn(\dev; \scl t, \scl x) \leq C \exp(C \scl t + \|\dev\|^2_{L^2([0, \scl t] \times \R)}).
\end{equation} 
The constant $C$ here is universal. Further, by the Feynman-Kac formula, 
\begin{equation*}
\Zfnn(\dev; \scl t, \scl x) = \E_{0 \to \scl x}\Big[\exp\Big(\int_0^{\scl t} \dev(s, \bb(s))\Big)\Big]. 
\end{equation*}
Let us use \eqref{eq:ptbd} and the Feynman-Kac formula  to prove \eqref{eq:desiredbd}.
Fix $\z \in (0, 1)$. Similar to the proof of Lemma \ref{lem:continuity}, by the Feynman-Kac formula and the H\"{o}lder inequality,
\begin{align*}
\Zfnn(\rho_1; \scl t, \scl x) 
&\leq 
\Zfnn(\z^{-1} (\dev_1 - (1-\z) \dev_2); \scl t, \scl x)^{\z} \Zfnn(\dev_2; \scl t, \scl x)^{1-\z}.
\end{align*}
Taking the logarithm of both sides  yields 
\begin{align}
\notag
\log \Zfnn(\rho_1; \scl t, \scl x)
&\leq \z \log \Zfnn(\z^{-1} (\rho_1 - (1-\z) \rho_2); \scl t, \scl x) + (1-\z) \log \Zfnn(\rho_2; \scl t, \scl x)
\\
\label{e.p.uniformdist}
&\leq \z \log \Zfnn(\z^{-1} (\rho_1 - (1-\z) \rho_2); \scl t, \scl x) + \log \Zfnn(\rho_2; \scl t, \scl x).
\end{align}
The last inequality is due to $\Zfnn(\dev_2; t, x) \geq 1$, since $\rho_2$ is non-negative.
\bigskip
\\
Subtracting $\log \Zfnn(\rho_2; \scl t, \scl x)$ from both sides of \eqref{e.p.uniformdist} and  
applying \eqref{eq:ptbd} to the resulting right hand side, we get
\begin{align}
\notag
\log \Zfnn(\rho_1; \scl t, \scl x) - \log \Zfnn(\rho_2; \scl t, \scl x) 
\notag
&\leq \z \log \Zfnn(\z^{-1} (\rho_1 - (1-\z) \rho_2); \scl t, \scl x) 
\\
\notag
&\leq C\z \scl t +  C \z^{-1} \|\rho_1 - (1-\z) \rho_2\|^2_{L^2([0, \scl t] \times \R)}.
\end{align}
Applying $t \leq 2$ and the inequality $\|f + g\|^2_{L^2} \leq 2(\|f\|_{L^2}^2 + \|g\|_{L^2}^2)$ to the right hand side leads to
\begin{align}
\label{e.p.uniformdist1}
\log \Zfnn(\rho_1; \scl t, \scl x) - \log \Zfnn(\rho_2; \scl t, \scl x) \leq C \z \scl + C \z^{-1} \|\dev_1 - \dev_2\|^2_{L^2([0, \scl t] \times \R)} + C\z \|\dev_2\|^2_{L^2([0, \scl t] \times \R)}.
\end{align}
The constant $C$ is universal and does not depend on $\scl, t, x$. 
Swapping $\dev_1, \dev_2$ in \eqref{e.p.uniformdist1} gives that 
\begin{equation*}
\big|\log \Zfnn(\rho_1; \scl t, \scl x) - \log \Zfnn(\rho_2; \scl t, \scl x)\big| \leq C\Big(\z^{-1} \|\rho_1 - \rho_2\|^2_{L^2([0, \scl t] \times \R)} + \z(\scl + \|\rho_1\|^2_{L^2([0, \scl t] \times \R)} + \|\rho_2\|^2_{L^2([0, \scl t] \times \R)}) \Big). 
\end{equation*}
Dividing both sides by $\scl$ and using $t \leq 2$, we have 
\begin{align}\notag
&\scl^{-1} \big| \log \Zfnn(\rho_1; \scl t, \scl x)
- \log \Zfnn(\rho_2; \scl t, \scl x)\big|\\
&\leq  C\Big(\z^{-1} \scl^{-1} \|\rho_1 - \rho_2\|^2_{L^2([0, 2\scl] \times \R)} + \z \scl^{-1} (\scl + \|\rho_1\|^2_{L^2([0, 2\scl] \times \R)} + \|\rho_2\|^2_{L^2([0, 2\scl] \times \R)}) \Big). 
\end{align}
Taking $\z = \scl^{-\frac{1}{2}} \|\rho_1 -\rho_2\|_{L^2([0, 2 \scl] \times \R)} \in (0, 1)$, 
we conclude \eqref{eq:desiredbd}.
\end{proof}
\subsection{Proof of Theorem~\ref{thm:limshape}}
We begin with a reduction.
Recall that Proposition~\ref{prop:hfncalK} states, for arbitrary fixed $\scl, \delta > 0$,  
\begin{equation*}
\lim_{\e \to 0} \P\big[\dist_\delta (h_{\e, \scl}, \hfn_\scl(\calK_\scl)) < \delta\, \big|\, h_\e (2, 0) + \log \sqrt{4\pi} \geq \scl\big] = 1.
\end{equation*} 
Combining Propositions \ref{prop:devmdist} and \ref{prop:uniformdist} gives 
$\lim_{\scl \to \infty}\dist_\delta (\hfn_\scl(\calK_\scl), \hfn_\scl (\rho_*)) = 0$.
Given these results, it suffices to prove
\begin{equation}\label{eq:hfntoh*}
\lim_{\scl \to \infty}\dist_\delta (\hfn_\scl (\rho_*),\hfn_*) = 0,
\qquad
\text{for any } \delta > 0.
\end{equation}

\medskip \indentation 
The proof of \eqref{eq:hfntoh*} starts with the Feynman-Kac formula.
Set $ \varphi = \rho_* $ in \eqref{eq:FK} and take logarithm on both sides to get
\begin{align}
\label{e.pf.limitshape.1}
\hfn_\scl(\rho_*; t, x) 
&= \scl^{-1} \log \E_{\scl x \to 0}\Big[\exp\Big(\int_0^{\scl t} \devm(\bb(s)) ds\Big)\Big] - \frac{x^2}{2t} - \scl^{-1} \log \sqrt{4\pi}.
\end{align}
For $ U_i=U_i(t,x,\lambda)>0 $, we write $ U_1 \sim U_2 $ if $ \lim_{\scl\to\infty} \scl^{-1}\log (U_1/U_2) =0 $ uniformly over $ (t,x)\in[\delta,2]\times[-\delta^{-1},\delta^{-1}] $.
Our goal is to estimate the expectation in \eqref{e.pf.limitshape.1}.
Fix a mesoscopic scale $ \lambda^a $. Any $ a\in(0,\frac12) $ will do, and we fix $ a=\frac14 $ for the sake of concreteness.
The Brownian bridge in \eqref{e.pf.limitshape.1} starts at $ \bb(0)=\lambda x $ and returns to $ \bb(\lambda t) = 0 $.
Consider the first time the bridge enters the region $ [-\lambda^\frac14,\lambda^\frac14] $, namely $ \tau := \inf\{ s \geq 0 : |\bb(s)| \leq \lambda^\frac14 \} $.
Decompose the integral in \eqref{e.pf.limitshape.1} into $ \int_0^{\tau} + \int_{\tau}^{\lambda t} $.
In the first integral, we have $ |\bb(s)| > \lambda^\frac14 $, which gives $ |\devm(\bb(s))| \leq \exp(-\lambda^{1/4}/C) $.
This shows that the contribution of $ \devm(\bb(s)) $ within $ s\in[0,\tau] $ is negligible, so
\begin{align}
\label{e.pf.limitshape.2}
\E_{\scl x \to 0}\Big[\exp\Big(\int_0^{\scl t} \devm(\bb(s)) \d s\Big)\Big] 
\sim
\E_{\scl x \to 0}\Big[\exp\Big(\int_{\tau}^{\scl t} \devm(\bb(s)) \d s\Big)\Big]. 
\end{align}
To proceed, condition on $ \tau $ in \eqref{e.pf.limitshape.2}, namely $ \E_{\scl x \to 0}[\Cdot] = \E_{\scl x \to 0}[\E[\Cdot|\tau]] $,
and recognize the inner expectation as $ \E[\exp(\int_{\tau}^{\scl t} \devm(\bb(s)) \d s)|\tau] = U(\tau,t) $, where $ U(s',t):=\E_{\lambda^{1/4}\to 0}[\exp(\int_{0}^{\scl t-s'} \devm(\bb(s))\d s )] $.
Fix any $ \zeta\in(0,\delta) $.
By Lemma \ref{lem:bboptime} and \ref{lem:potbd}, 
$ U(s',t) \sim \frac12 \exp(\lambda t-s') $, uniformly over $ t \in [\delta,2] $ and $ s'\in[0,\lambda t-\lambda \zeta] $.
This approximation extends to $ s'\in[0,\lambda t] $.
To see why, note that when $ s'\in(\lambda t-\lambda\zeta,\lambda t] $, we have $ |\int_{\tau}^{\scl t} \devm(\bb(s)) \d s| \leq \lambda \zeta $, 
and note that $ \zeta $ can be taken to be arbitrarily small.
Therefore,
\begin{align}
\label{e.pf.limitshape.3}
\E_{\scl x \to 0}\Big[\exp\Big(\int_0^{\scl t} \devm(\bb(s)) \d s\Big)\Big] 
\sim
e^{\frac{\scl t}{2}} \E_{\scl x \to 0}[ e^{-\frac{1}{2}\tau} ].
\end{align}

\medskip \indentation 
Next we bound the right side of \eqref{e.pf.limitshape.3}.
By symmetry it suffices to consider $ x \geq 0 $, which we assume hereafter.
Let $ \mathsf{T}(s,u) $ be the first hitting time of $ 0 $ of the Brownian bridge that starts from $ u $ at time $ 0 $ and returns to $ 0 $ at time $ s $.
For fixed $u >0 $, $ \mathsf{T}(s, u) $ is stochastically increasing in $ s $.
This fact can be proven by expressing the bridge as a drifted Brownian motion via Doob{}'s h transform.
Recall that $ \tau $ is the first time $ \bb $ enters $ [-\scl^{1/4},\scl^{1/4}] $.
After the first entrance, consider the excess amount of time it takes for $ \bb $ to hit $ 0 $, namely $ \sigma := \inf\{ s \geq 0 : \bb(s+\tau) = 0 \} $.
Indeed, $ \tau+\sigma = \mathsf{T}(\scl t, \scl x) $, so $ \E_{\scl x \to 0}[ e^{-\frac{1}{2}\tau} ] = \E[ e^{-\frac{1}{2}\mathsf{T}(\scl t, \scl x)} e^{\frac12 \sigma} ] $.
Conditioned on $ \tau $, $ \sigma $ is equal in law to $ \mathsf{T}(\scl t-\tau, \scl^{1/4}) $, which is stochastically bounded above by $ \mathsf{T}(\scl t, \scl^{1/4}) $.
Using H\"{o}lder's inequality and the stochastic bound, we have
\begin{align}
\label{e.pf.limitshape.4}
\E[ e^{-\frac{1}{2}\mathsf{T}(\scl t, \scl x)} ]
\leq
\E_{\scl x \to 0}[ e^{-\frac{1}{2}\tau} ] 
\leq 
\big( \E[ e^{-\frac{n+1}{2n}\mathsf{T}(\scl t, \scl x)} ] \big)^{\frac{n}{n+1}}
\big( \E[ e^{\frac{n+1}{2}\mathsf{T}(\scl t, \scl^{1/4})} ] \big)^{\frac{1}{n+1}}.
\end{align}

\medskip \indentation 
Given \eqref{e.pf.limitshape.4}, we seek to estimate $ \E[ \exp(-\beta\mathsf{T}(\scl t, \scl x)) ] $.
The first step is to derive the probability density function of $ \mathsf{T}(\scl t, \scl x) $.
Express the Brownian bridge $ \bb $ by a Brownian motion as $ \bb(s) =  (1-\frac{s}{\scl t})(\scl x - \sqrt{\scl t}  B(\frac{s}{\scl t-s})) $;
relate $ \mathsf{T}(\scl t, \scl x) $ to a hitting time of $ B $;
use the known density function of the hitting time of $ B $.
The result reads
$
(\text{density function of }\mathsf{T}(\scl t, \scl x))(s)
=
(\sqrt{\scl^3 t x^2}/\sqrt{2\pi s^3(\scl t -s)})
\exp( -\tfrac{\scl t - s}{2\scl t s} (\scl x)^2 ).
$
Use this density function to express $ \E[ \exp(-\beta\mathsf{T}(\scl t, \scl x)) ] $  as an integral, and perform a change of variables $ s \mapsto \scl t s $. We have
\begin{align}
\label{e.pf.limitshape.5}
\E[ e^{-\beta\mathsf{T}(\scl t, \scl x)} ] 
=
\int_0^{1} \frac{\sqrt{\scl x^2}}{\sqrt{2\pi t s^3(1-s)}} \exp\big( \scl \, V_\beta(s,t,x) \big) \d s,
\end{align}
where $ V_\beta(s,t,x) := -\beta t s - \frac{1-s}{2st}x^2 $.
This integral can be analyzed by Laplace's method.
Differentiating $ V $ in $ s $, one finds that $ V(\Cdot,x) $ attains its unique maximum at $ s=\min\{ \frac{x}{\sqrt{2\beta} \, t}, 1 \} $, and $ \partial^2_{s} V_\beta = \frac{x^2}{t s^3} $.
Using these properties in \eqref{e.pf.limitshape.5}, it is not hard to show that
$ \E[ \exp(-\beta\mathsf{T}(\scl t, \scl x)) ] \sim \exp(-\scl t V_\beta(\min\{ \frac{x}{\sqrt{2\beta} \, t}, 1 \}, t ,x)) $,
for fixed $ \beta>0 $ and uniformly over $ [t,x]\in[\delta,2]\times[0,\delta^{-1}] $.
Insert this estimate into \eqref{e.pf.limitshape.4}; take $ \scl^{-1}\log(\Cdot) $ of the result; send $ \scl\to\infty $ first and $ n\to\infty $ later.
We obtain
\begin{align*}
\lim_{\scl\to\infty} \scl^{-1} \log
\E_{\scl x \to 0}[ e^{-\frac{1}{2}\tau} ] 
=
V_{1/2}(\min\{\tfrac{x}{t},  1\}, t, x) 
= 
\left\{\begin{array}{l@{,}l}
\frac{x^2}{2t} - x	& \text{ when } x\in[0,t],
\\
-\frac{t}{2} & \text{ when } x>t,
\end{array}\right.
\end{align*}
uniformly over $ [t,x]\in[\delta,2]\times[0,\delta^{-1}] $.
Inserting this into \eqref{e.pf.limitshape.3} and then inserting the result into \eqref{e.pf.limitshape.1} completes the proof of \eqref{eq:hfntoh*} and hence the proof of Theorem~\ref{thm:limshape}.
\bibliographystyle{alpha}                         
\bibliography{ref.bib}                            

\newcommand{\etalchar}[1]{$^{#1}$}
\begin{thebibliography}{HLDM{\etalchar{+}}18}

\bibitem[BLL74]{brascamp1974general}
Herm~J Brascamp, Elliott~H Lieb, and JM~Luttinger.
\newblock A general rearrangement inequality for multiple integrals.
\newblock {\em Journal of functional analysis}, 17(2):227--237, 1974.

\bibitem[CC21]{cafasso2021riemann}
Mattia Cafasso and Tom Claeys.
\newblock {A Riemann-Hilbert Approach to the Lower Tail of the
  Kardar-Parisi-Zhang Equation}.
\newblock {\em Communications on Pure and Applied Mathematics}, 2021.

\bibitem[CG20a]{corwin2020kpz}
Ivan Corwin and Promit Ghosal.
\newblock {KPZ equation tails for general initial data}.
\newblock {\em Electronic Journal of Probability}, 25, 2020.

\bibitem[CG20b]{corwin2020lower}
Ivan Corwin and Promit Ghosal.
\newblock {Lower tail of the KPZ equation}.
\newblock {\em Duke Mathematical Journal}, 169(7):1329--1395, 2020.

\bibitem[CGK{\etalchar{+}}18]{corwin2018coulomb}
Ivan Corwin, Promit Ghosal, Alexandre Krajenbrink, Pierre Le~Doussal, and
  Li-Cheng Tsai.
\newblock {Coulomb-gas electrostatics controls large fluctuations of the
  Kardar-Parisi-Zhang equation}.
\newblock {\em Physical review letters}, 121(6):060201, 2018.

\bibitem[Che10]{chen2010random}
Xia Chen.
\newblock {\em Random walk intersections: large deviations and related topics}.
\newblock Number 157. American Mathematical Soc., 2010.

\bibitem[Cor12]{corwin2012kardar}
Ivan Corwin.
\newblock {The Kardar--Parisi--Zhang equation and universality class}.
\newblock {\em Random matrices: Theory and applications}, 1(01):1130001, 2012.

\bibitem[CS20]{corwin2020some}
Ivan Corwin and Hao Shen.
\newblock Some recent progress in singular stochastic partial differential
  equations.
\newblock {\em Bulletin of the American Mathematical Society}, 57(3):409--454,
  2020.

\bibitem[CW17]{chandra2017stochastic}
Ajay Chandra and Hendrik Weber.
\newblock {Stochastic PDEs, regularity structures, and interacting particle
  systems}.
\newblock In {\em Annales de la Facult{\'e} des sciences de Toulouse:
  Math{\'e}matiques}, volume~26, pages 847--909, 2017.

\bibitem[DELL14]{dolbeault2014one}
Jean Dolbeault, Maria~J Esteban, Ari Laptev, and Michael Loss.
\newblock One-dimensional gagliardo--nirenberg--sobolev inequalities: remarks
  on duality and flows.
\newblock {\em Journal of the London Mathematical Society}, 90(2):525--550,
  2014.

\bibitem[DT21]{das2021fractional}
Sayan Das and Li-Cheng Tsai.
\newblock Fractional moments of the stochastic heat equation.
\newblock In {\em Annales de l'Institut Henri Poincar{\'e}, Probabilit{\'e}s et
  Statistiques}, volume~57, pages 778--799. Institut Henri Poincar{\'e}, 2021.

\bibitem[FS10]{ferrari2010random}
Patrik~L Ferrari and Herbert Spohn.
\newblock Random growth models.
\newblock {\em arXiv preprint arXiv:1003.0881}, 2010.

\bibitem[GIP15]{gubinelli2015paracontrolled}
Massimiliano Gubinelli, Peter Imkeller, and Nicolas Perkowski.
\newblock {Paracontrolled distributions and singular PDEs}.
\newblock In {\em Forum of Mathematics, Pi}, volume~3. Cambridge University
  Press, 2015.

\bibitem[GJ14]{gonccalves2014nonlinear}
Patr{\'\i}cia Gon{\c{c}}alves and Milton Jara.
\newblock Nonlinear fluctuations of weakly asymmetric interacting particle
  systems.
\newblock {\em Archive for Rational Mechanics and Analysis}, 212(2):597--644,
  2014.

\bibitem[GL20]{ghosal2020lyapunov}
Promit Ghosal and Yier Lin.
\newblock {Lyapunov exponents of the SHE for general initial data}.
\newblock {\em arXiv:2007.06505}, 2020.

\bibitem[GP18]{gubinelli2018energy}
Massimiliano Gubinelli and Nicolas Perkowski.
\newblock {Energy solutions of KPZ are unique}.
\newblock {\em Journal of the American Mathematical Society}, 31(2):427--471,
  2018.

\bibitem[Hai14]{hairer2014theory}
Martin Hairer.
\newblock A theory of regularity structures.
\newblock {\em Inventiones mathematicae}, 198(2):269--504, 2014.

\bibitem[HLDM{\etalchar{+}}18]{hartmann2018high}
Alexander~K Hartmann, Pierre Le~Doussal, Satya~N Majumdar, Alberto Rosso, and
  Gregory Schehr.
\newblock {High-precision simulation of the height distribution for the KPZ
  equation}.
\newblock {\em EPL (Europhysics Letters)}, 121(6):67004, 2018.

\bibitem[HMS19]{hartmann2019optimal}
Alexander~K Hartmann, Baruch Meerson, and Pavel Sasorov.
\newblock Optimal paths of nonequilibrium stochastic fields: The
  {K}ardar{--P}arisi{--Z}hang interface as a test case.
\newblock {\em Physical Review Research}, 1(3):032043, 2019.

\bibitem[HMS21]{hartmann2021observing}
Alexander~K Hartmann, Baruch Meerson, and Pavel Sasorov.
\newblock Observing symmetry-broken optimal paths of the stationary
  kardar-parisi-zhang interface via a large-deviation sampling of directed
  polymers in random media.
\newblock {\em Physical Review E}, 104(5):054125, 2021.

\bibitem[JKM16]{janas16}
Michael Janas, Alex Kamenev, and Baruch Meerson.
\newblock Dynamical phase transition in large-deviation statistics of the
  {K}ardar{--P}arisi{--Z}hang equation.
\newblock {\em Phys Rev E}, 94(3):032133, 2016.

\bibitem[KK07]{kolokolov2007optimal}
IV~Kolokolov and SE~Korshunov.
\newblock Optimal fluctuation approach to a directed polymer in a random
  medium.
\newblock {\em Physical Review B}, 75(14):140201, 2007.

\bibitem[KK09]{kolokolov2009explicit}
IV~Kolokolov and SE~Korshunov.
\newblock Explicit solution of the optimal fluctuation problem for an elastic
  string in a random medium.
\newblock {\em Physical Review E}, 80(3):031107, 2009.

\bibitem[KLD17]{krajenbrink17short}
Alexandre Krajenbrink and Pierre Le~Doussal.
\newblock Exact short-time height distribution in the one-dimensional
  {K}ardar{--P}arisi{--Z}hang equation with {B}rownian initial condition.
\newblock {\em Phys Rev E}, 96(2):020102, 2017.

\bibitem[KLD18]{krajenbrink2018simple}
Alexandre Krajenbrink and Pierre Le~Doussal.
\newblock {Simple derivation of the $\lambda H^{5/2}$ tail for the 1D KPZ
  equation}.
\newblock {\em Journal of Statistical Mechanics: Theory and Experiment},
  2018(6):063210, 2018.

\bibitem[KLD19]{krajenbrink2019linear}
Alexandre Krajenbrink and Pierre Le~Doussal.
\newblock Linear statistics and pushed coulomb gas at the edge of
  $\beta$-random matrices: Four paths to large deviations.
\newblock {\em EPL (Europhysics Letters)}, 125(2):20009, 2019.

\bibitem[KLD21]{krajenbrink2021inverse}
Alexandre Krajenbrink and Pierre Le~Doussal.
\newblock Inverse scattering of the {Z}akharov{--S}habat system solves the weak
  noise theory of the {K}ardar{--P}arisi{--Z}hang equation.
\newblock {\em Phys Rev Lett}, 127(6):064101, 2021.

\bibitem[KLD22]{krajenbrink21flat}
Alexandre Krajenbrink and Pierre Le~Doussal.
\newblock Inverse scattering solution of the weak noise theory of the
  {K}ardar{--P}arisi{--Z}hang equation with flat and {B}rownian initial
  conditions.
\newblock {\em Phys Rev E}, 105:054142, 2022.

\bibitem[KLDP18]{krajenbrink2018systematic}
Alexandre Krajenbrink, Pierre Le~Doussal, and Sylvain Prolhac.
\newblock {Systematic time expansion for the Kardar--Parisi--Zhang equation,
  linear statistics of the GUE at the edge and trapped fermions}.
\newblock {\em Nuclear Physics B}, 936:239--305, 2018.

\bibitem[KMS16]{kamenev2016short}
Alex Kamenev, Baruch Meerson, and Pavel~V Sasorov.
\newblock {Short-time height distribution in the one-dimensional
  Kardar-Parisi-Zhang equation: Starting from a parabola}.
\newblock {\em Physical Review E}, 94(3):032108, 2016.

\bibitem[KPZ86]{kardar1986dynamic}
Mehran Kardar, Giorgio Parisi, and Yi-Cheng Zhang.
\newblock Dynamic scaling of growing interfaces.
\newblock {\em Physical Review Letters}, 56(9):889, 1986.

\bibitem[Kra19]{krajenbrink:tel-02537219}
Alexandre Krajenbrink.
\newblock {\em {Beyond the typical fluctuations : a journey to the large
  deviations in the Kardar-Parisi-Zhang growth model}}.
\newblock Theses, {Universit{\'e} Paris sciences et lettres}, June 2019.

\bibitem[LD20]{doussal2019large}
Pierre Le~Doussal.
\newblock Large deviations for the {K}ardar{--P}arisi{--Z}hang equation from
  the {K}adomtsev{--P}etviashvili equation.
\newblock {\em J Stat Mech Theory Exp}, 2020.
\newblock 043201.

\bibitem[LDMRS16]{le2016exact}
Pierre Le~Doussal, Satya~N Majumdar, Alberto Rosso, and Gr{\'e}gory Schehr.
\newblock {Exact short-time height distribution in the one-dimensional
  Kardar-Parisi-Zhang equation and edge fermions at high temperature}.
\newblock {\em Physical review letters}, 117(7):070403, 2016.

\bibitem[LDMS16]{le2016large}
Pierre Le~Doussal, Satya~N Majumdar, and Gr{\'e}gory Schehr.
\newblock {Large deviations for the height in 1D Kardar-Parisi-Zhang growth at
  late times}.
\newblock {\em EPL (Europhysics Letters)}, 113(6):60004, 2016.

\bibitem[LL01]{LiebLoss}
Elliott~H. Lieb and Michael Loss.
\newblock {\em Analysis}, volume~14 of {\em Graduate Studies in Mathematics}.
\newblock American Mathematical Society, Providence, RI, second edition, 2001.

\bibitem[LT21]{lin2021short}
Yier Lin and Li-Cheng Tsai.
\newblock Short time large deviations of the {KPZ} equation.
\newblock {\em Comm Math Phys}, 386(1):359--393, 2021.

\bibitem[LT22]{lin22}
Yier Lin and Li-Cheng Tsai.
\newblock A lower-tail limit in the weak noise theory.
\newblock {\em arXiv:2210.05629}, 2022.

\bibitem[MF14]{flores2014strict}
Gregorio~R Moreno~Flores.
\newblock On the (strict) positivity of solutions of the stochastic heat
  equation.
\newblock {\em Annals of Probability}, 42(4):1635--1643, 2014.

\bibitem[MKV16]{meerson2016large}
Baruch Meerson, Eytan Katzav, and Arkady Vilenkin.
\newblock {Large deviations of surface height in the Kardar-Parisi-Zhang
  equation}.
\newblock {\em Physical review letters}, 116(7):070601, 2016.

\bibitem[Mue91]{mueller1991support}
Carl Mueller.
\newblock On the support of solutions to the heat equation with noise.
\newblock {\em Stochastics: An International Journal of Probability and
  Stochastic Processes}, 37(4):225--245, 1991.

\bibitem[QS15]{quastel2015one}
Jeremy Quastel and Herbert Spohn.
\newblock {The one-dimensional KPZ equation and its universality class}.
\newblock {\em Journal of Statistical Physics}, 160(4):965--984, 2015.

\bibitem[Qua11]{quastel2011introduction}
Jeremy Quastel.
\newblock {Introduction to KPZ}.
\newblock {\em Current developments in mathematics}, 2011(1), 2011.

\bibitem[SKM18]{smith18}
Naftali~R Smith, Alex Kamenev, and Baruch Meerson.
\newblock Landau theory of the short-time dynamical phase transitions of the
  {K}ardar{--P}arisi{--Z}hang interface.
\newblock {\em Phys Rev E}, 97(4):042130, 2018.

\bibitem[SMP17]{sasorov2017large}
Pavel Sasorov, Baruch Meerson, and Sylvain Prolhac.
\newblock {Large deviations of surface height in the 1+ 1-dimensional
  Kardar--Parisi--Zhang equation: exact long-time results for $\lambda H< 0$}.
\newblock {\em Journal of Statistical Mechanics: Theory and Experiment},
  2017(6):063203, 2017.

\bibitem[Tsa18]{tsai2018exact}
Li-Cheng Tsai.
\newblock {Exact lower tail large deviations of the KPZ equation}.
\newblock {\em arXiv preprint arXiv:1809.03410}, 2018.

\end{thebibliography}
\end{document}